\newtheorem{thm}{Theorem}[section]
\newtheorem{thmx}{Theorem}[section]
\newtheorem{cor}[thm]{Corollary}
\newtheorem{lem}[thm]{Lemma}
\newtheorem{case}{Case}
\newtheorem{subcase}{Subcase}
\newtheorem*{conj*}{Denjoy's Conjecture}
\theoremstyle{definition}
\newtheorem{con}{Conjecture}
\newtheorem{rem}{Remark}
\newtheorem{Que}{Question}
\newtheorem{exa}{Example}
\numberwithin{equation}{section}
\begin{document}

\title[]{ On meromorphic solutions of Fermat type delay-differential equations with two exponential terms }

\author{Xuxu Xiang, Jianren Long*, Mengting Xia, Zhigao Qin\quad }

\address{Xuxu Xiang \newline School of Mathematical Sciences, Guizhou Normal University, Guiyang, 550025, P.R. China. }
\email{1245410002@qq.com}

\address{Jianren Long \newline School of Mathematical Sciences, Guizhou Normal University, Guiyang, 550025, P.R. China. }
\email{longjianren2004@163.com}

\address{Mengting Xia \newline School of Mathematical Sciences, Guizhou Normal University, Guiyang, 550025, P.R. China. }
\email{2190331341@qq.com}

\address{Zhigao  Qin  \newline School of Mathematical Sciences, Guizhou Normal University, Guiyang, 550025, P.R. China. }
\email{1480266124@qq.com}

\date{}


\begin{abstract}The  existence of  the meromorphic  solutions to  Fermat type delay-differential equation
	\begin{equation}
		f^n(z)+a(f^{(l)}(z+c))^m=p_1(z)e^{a_1z^k}+p_2(z)e^{a_2z^k}, \nonumber
	\end{equation}
	is derived by using Nevanlinna theory under certain conditions, where $k\ge1$, $m,$  $n$  and $l$ are integers, $p_i$ are nonzero entire functions of order less than $k$, $c$, $a$ and  $a_i$ are  constants, $i=1,2$.
	These results not only improve the previous results from Zhu et al. [J. Contemp. Math. Anal. 59(2024), 209-219], Qi et al. [Mediterr. J. Math. 21(2024), article no. 122], but also completely solve two conjectures posed by Gao et al. [Mediterr. J. Math. 20(2023), article no. 167].
	Some examples are given to illustrate these results.
\end{abstract}



\keywords{Meromorphic  solutions; Existence; Exponential functions; Nevanlinna  theory; Fermat type delay-differential equations\\
2020 Mathematics Subject Classification: 30D35, 39B32\\
*Corresponding author. \\
}
\maketitle
\section{Introduction  and main results}\label{sec1}

~~~~
Let $f$ be a meromorphic function in the complex plane $\mathbb{C}$. Assume that the reader is familiar with the standard notation and basic results of Nevanlinna theory, such as $m(r,f),~N(r,f)$,$~T(r,f)$, see \cite{hayman} for more details. A meromorphic
function $g$ is said to be a small function of $f$ if $T(r,g)=S(r,f)$, where $S(r,f)$  denotes any quantity
that satisfies $S(r,f)= o(T(r, f))$ as $r$ tends to infinity, outside of a possible exceptional set of finite linear measure. $\rho(f)=\underset{r\rightarrow \infty}{\lim\sup}\frac{\log^+T(r,f)}{\log r}$ and   $\rho_2(f)=\underset{r\rightarrow \infty}{\lim\sup}\frac{\log^+\log^+T(r,f)}{\log r}$  are used to denote the order and the hyper-order  of $f$, respectively. Define ${\Theta(\infty,f)}=1-\overline{\lim\limits_{r\to\infty}}{\frac{\overline{N}\left(r,f\right)}{T(r,f)}}$,  and $\mu(f)$  is used to denote the lower order of $f$.


 In 1927,  Montel\cite{Montel} studied Fermat type functional equation
\begin{align}
	\label{0.1.1}
	f^n(z)+g^m(z)=1,
\end{align}
where  $f,g$ are meromorphic functions, $m,n$ are positive integers. Montel proved  \eqref{0.1.1} has no nonconstant entire solutions under the conditions $m=n>2$. Later,  the existence of meromorphic solutions to equation \eqref{0.1.1} is discussed by Gross\cite{Gross1,Gross2} and Baker\cite{B}. Some important results related to Fermat type functional equations are summarized by Chen et al. \cite[Proposition 1]{Chen}.

In 1970, Yang\cite{Yang1970} applied Nevanlinna  theory to investigate the
Fermat functional equation
\begin{align}
	\label{0.1.2}
	a(z)f^n(z)+b(z)g^m(z)=1,
\end{align}
where $a$, $b$ are small functions of $f$ and $g$, and $m,n\ge3$  are integers. Indeed, upon careful
inspection of the proof of \cite[Theorem 1]{Yang1970},  Laine et al.\cite{Laine} expressed Yang’s result  as follows.

\vspace{6pt}
\begin{thmx}\cite[Theorem 1.1]{Laine}
	\label{thb}
	Let $m$ and $n$ be positive integers satisfying $\frac{1}{m}+\frac{1}{n}<\frac{2}{3}$. Then, equation \eqref{0.1.2}
	has no nonconstant meromorphic solutions $f$ and $g$.
	Moreover, if $\frac{1}{m}+\frac{1}{n}<1$, there
	exists no nonconstant meromorphic solution of \eqref{0.1.2} such that ${\Theta(\infty,f)}={\Theta(\infty,g)}=1$.
\end{thmx}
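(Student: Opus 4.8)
The plan is to argue by contradiction, by supposing \eqref{0.1.2} has a solution with $f$ (hence also $g$) nonconstant and then applying the Second Main Theorem to the auxiliary function $\phi:=af^n$. First I would dispose of a degenerate case: if one of $f,g$ were constant, say $g\equiv c$, then $af^n=1-bc^m$ would be a small function of $f$, forcing $T(r,f^n)=S(r,f)$ and hence $f$ constant; so either both are constant or both are nonconstant, and we assume the latter. Writing $\phi=af^n$ and $\psi=bg^m$, the equation becomes $\phi+\psi=1$, i.e.\ $\phi-1=-\psi$. Since $a,b$ are small with respect to $f$ and $g$, one has $T(r,\phi)=nT(r,f)+S(r,f)$ and $T(r,\psi)=mT(r,g)+S(r,g)$; comparing $T(r,\phi)$ with $T(r,\phi-1)=T(r,\psi)+O(1)$ yields $nT(r,f)=mT(r,g)+S(r)$, so $f$ and $g$ have comparable growth and the small-function terms agree, $S(r,f)=S(r,g)=:S(r)$.

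The core of the argument is the observation that, away from the zeros and poles of $a$ and $b$ (a set contributing only $S(r)$), every zero of $\phi$ is a zero of $f$ of multiplicity a multiple of $n$, every pole of $\phi$ is a pole of $f$ of multiplicity a multiple of $n$, and, because $\phi-1=-bg^m$, every zero of $\phi-1$ has multiplicity a multiple of $m$. Consequently
\[
\overline{N}(r,\phi)\le\tfrac1n T(r,\phi)+S(r),\qquad \overline{N}\Bigl(r,\tfrac1\phi\Bigr)\le\tfrac1n T(r,\phi)+S(r),\qquad \overline{N}\Bigl(r,\tfrac1{\phi-1}\Bigr)\le\tfrac1m T(r,\phi)+S(r).
\]
Feeding these into the Second Main Theorem for $\phi$ with the three values $0,1,\infty$ gives $T(r,\phi)\le\bigl(\tfrac2n+\tfrac1m\bigr)T(r,\phi)+S(r)$, that is, $\bigl(1-\tfrac2n-\tfrac1m\bigr)T(r,\phi)\le S(r)$; running the symmetric computation with $\psi$ gives $\bigl(1-\tfrac2m-\tfrac1n\bigr)T(r,\psi)\le S(r)$. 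Now if $\tfrac1m+\tfrac1n<\tfrac23$ then $\bigl(\tfrac2n+\tfrac1m\bigr)+\bigl(\tfrac2m+\tfrac1n\bigr)=\tfrac3m+\tfrac3n<2$, so at least one of the two coefficients $1-\tfrac2n-\tfrac1m$, $1-\tfrac2m-\tfrac1n$ is positive, and the corresponding inequality forces $T(r,\phi)=S(r)$ or $T(r,\psi)=S(r)$, contradicting the nonconstancy of $f$ or $g$.

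For the second statement, the hypotheses $\Theta(\infty,f)=\Theta(\infty,g)=1$ say precisely that $\overline{N}(r,f)=S(r,f)$ and $\overline{N}(r,g)=S(r,g)$, hence $\overline{N}(r,\phi)=S(r)$ and $\overline{N}(r,\psi)=S(r)$; the Second Main Theorem estimate then sharpens to $\bigl(1-\tfrac1n-\tfrac1m\bigr)T(r,\phi)\le S(r)$, which is already a contradiction once $\tfrac1m+\tfrac1n<1$. The step I expect to require the most care is the justification of the three truncated counting-function bounds above: one must check that the exceptional points coming from zeros and poles of the small coefficients $a$ and $b$ really contribute only $S(r)$, and that all error terms produced by the Second Main Theorem—nominally $S(r,\phi)$—may be identified with $S(r)=S(r,f)=S(r,g)$; the Second Main Theorem input itself is then entirely routine.
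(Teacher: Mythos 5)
Your argument is correct and is essentially the standard proof of this result (Yang's theorem, as restated by Laine--Latreuch): apply the second main theorem with the values $0,1,\infty$ to $\phi=af^n$ and to $\psi=bg^m$, use that zeros/poles of $\phi$ and zeros of $\phi-1$ have multiplicities divisible by $n$ and $m$ up to the $S(r)$ contribution of the small coefficients, and add the two resulting inequalities. The paper itself quotes Theorem~\ref{thb} from the literature without proof, so there is nothing further to compare; your handling of the truncated counting functions and of the identification $S(r,f)=S(r,g)=S(r,\phi)$ is the right level of care.
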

\vspace{6pt}

In 2004,  Yang et al.\cite{yang2004} considered the Fermat type differential equation
\begin{align}
	\label{1.5}
	f^2(z)+a(f'(z))^2=b(z),
\end{align}
proved that if $b(z)$ is not a constant, $a$ is a constant, then \eqref{1.5} has  no nonconstant meromorphic solution $f$ satisfy $T(r,b)=S(r,f)$.

With the establishment and development of the difference Nevanlinna theory \cite{chiang,halburd2006,halburd20062}, the study of difference equations has become a hot topic, see\cite{halburd2007,halburd2017,lk,Lw, long, long1,xxx} and references therein.
Recently, there are many explorations concerning the Fermat type
difference equation
\begin{align}
	\label{0.1.3}
	f^n(z)+(f(z+c))^m=h(z),
\end{align}
where $c$ is a nonzero constant and $h$ is a given meromorphic function.  For example,
if $n=m=2$, $h=1$,  Liu et al.\cite{Lc} proved  \eqref{0.1.3} has an entire solution $f(z)=\sin(Az+B),$ where $B$ is a constant, $A=\frac{(4k+1)\pi}{2c}$, $k$ is an integer.
L\"u et al.\cite{lv} proved that if $f$ is a nonconstant meromorphic solution of \eqref{0.1.3} with finite order under the condition  $n\ge3,~m\ge2$ or $m\ge3,~n\ge2$ and $h(z)=e^{az+b}$, where $a,b$ are nonzero constants, then $m=n$ and $f(z)=ue^{\frac{az+b}{m}}$ with $u^m(1+e^{ac})=1$. Later, Bi et al. \cite{bi} cosidered the case $n=m=3$ and $h=e^{g(z)}$, where $g$ is a polynomial. Guo et al.\cite{Guo} extended this conclusion to the case  $h(z)=e^{g(z)}$, where $g$ is an entire function.

Since $e^{g(z)}$ is an entire function has no zero, then Laine et al.\cite{Laine} explore the existence problem of meromorphic
solutions to \eqref{0.1.3} when $h$  satisfies $N(r,h)+N(r,\frac{1}{h})=S(r,h)$, they obtained following result.

\vspace{6pt}
\begin{thmx}\cite[Theorem 2.2]{Laine}
	\label{thC}
	Let $h$ be meromorphic function  satisfying $N(r,h)+N(r,\frac{1}{h})=S(r,h)$ and let $n > m$ be positive integers. Then, \eqref{0.1.3} has no
	meromorphic solution $f$ with $\rho_2(f)<1$ except when $n = 2$ and $m = 1$. In this case,
	either:
	\begin{itemize}
		\item[\rm{(i)}]$f(z)=e^{az+b}-1$, where $a,b\in\mathbb C$ satisfy $e^{ac}=2$; or
		\item [\rm{(ii)}]$\overline{N}\left(r,\frac{1}{f'}\right)=T(r,f)+S(r,f)$, where $\overline{N}\left(r,\frac{1}{f'}\right)$ denotes the counting function corresponding to simple zeros of $f'$.
	\end{itemize}
\end{thmx}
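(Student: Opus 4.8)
The plan is to treat \eqref{0.1.3} in four stages, the decisive one being the analysis of the exceptional pair $(n,m)=(2,1)$.

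\emph{Stage 1 (shift estimates and pole analysis).} The hypothesis $\rho_2(f)<1$ is exactly what makes the shifts manageable: it gives the difference analogue of the lemma on the logarithmic derivative, $m\!\left(r,\frac{f(z+c)}{f(z)}\right)=S(r,f)$, together with $T(r,f(z+c))=T(r,f)+S(r,f)$ and $\overline N\!\left(r,\frac1{f(z+c)}\right)=\overline N\!\left(r,\frac1f\right)+S(r,f)$. From \eqref{0.1.3} one gets $T(r,h)\le (n+m)T(r,f)+O(1)$, and comparing characteristic functions (write $f^n=h-f(z+c)^m$) shows $T(r,h)\asymp T(r,f)$ up to $S(r,f)$, so $S(r,h)=S(r,f)$ and the hypothesis on $h$ reads $N(r,h)+N(r,1/h)=S(r,f)$. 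Next I would show $N(r,f)=S(r,f)$. If $z_0$ is a pole of $f$ of order $p$ that is not a pole of $h$, then balancing orders in \eqref{0.1.3} forces $f$ to have a pole at $z_0+c$ of order $np/m$; since $n>m$, iterating along $z_0+kc$ gives pole orders $p(n/m)^k$, so a chain never meeting a pole of $h$ produces $n(t,f)\gg e^{\lambda t}$ and hence $\rho_2(f)\ge 1$, a contradiction. Thus every pole of $f$ lies in a backward $c$-chain emanating from a pole of $h$, and along such a chain the orders decay by the factor $m/n$ while remaining positive integers, so each chain has length $\le \log_{n/m}(\text{order of its }h\text{-pole})$. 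Hence the poles of $f$ are controlled by those of $h$, giving $N(r,f)=S(r,f)$ and $m(r,f)=T(r,f)+S(r,f)$.

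\emph{Stage 2 (reduction to $n=2,\ m=1$).} Put $u=f^n/h$ and $v=f(z+c)^m/h$, so $u+v=1$ and $u=1/\bigl(1+f(z+c)^m/f^n\bigr)$, whence $T(r,u)=T\!\left(r,\frac{f(z+c)^m}{f^n}\right)+O(1)\ge (n-m)T(r,f)+S(r,f)\to\infty$. The poles of $u$ and of $v$ are $S(r,f)$ (they sit over poles of $f$ and zeros of $h$); off an $S$-set every zero of $u$ has multiplicity $\ge n$ and every zero of $u-1=-v$ has multiplicity $\ge m$. Applying the Second Main Theorem to $u$ at $0,1,\infty$ with the truncation estimates $\overline N(r,1/u)\le \frac1n N(r,1/u)+S\le\frac1n T(r,u)+S$ and $\overline N(r,1/(u-1))\le\frac1m T(r,u)+S$ gives $\bigl(1-\frac1n-\frac1m\bigr)T(r,u)\le S(r,u)$, so $\frac1n+\frac1m\ge1$; with $n>m$ this leaves only $m=1$. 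When $m=1$ I keep instead the sharper bound $\overline N(r,1/(u-1))=\overline N(r,1/f(z+c))+S\le T(r,f)+S$, obtaining $\bigl(1-\frac1n\bigr)T(r,u)\le T(r,f)+S$; combined with $T(r,u)\ge(n-1)T(r,f)+S$ this yields $(n^2-3n+1)T(r,f)\le S(r,f)$, impossible for $n\ge3$. Hence a solution can occur only when $n=2$, $m=1$.

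\emph{Stage 3 (the case $n=2,\ m=1$ and the dichotomy).} Now $f^2(z)+f(z+c)=h(z)$. Refining the estimates of Stage 2 one extracts that, off an $S$-set, the zeros of $f$ are simple, so $u=f^2/h$ has only double zeros, only $S$ poles, and simple $1$-points; consequently $u$ agrees with the square of a meromorphic function up to a factor of characteristic $S(r,f)$ (a Weierstrass-type factorisation, legitimate since $\rho_2<1$), i.e.\ $h$ is, up to such a factor, a perfect square. Differentiating the delay equation gives $2f(z)f'(z)+f'(z+c)=h'(z)$, and — crucially — since $N(r,h)+N(r,1/h)=S(r,f)$ one also gets $N(r,h')+N(r,1/h')=S(r,f)$. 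I would then run the argument of Stage 2 once more on the differentiated equation $\frac{2ff'}{h'}+\frac{f'(z+c)}{h'}=1$: the first summand has zeros over the zeros of $f$ \emph{and} over the zeros of $f'$, and the second over the zeros of $f'$; tracking these through the Second Main Theorem forces one of two alternatives. Either $f'(z+c)=2f'(z)$, equivalently $h=(f+1)^2$, equivalently $f(z+c)=2f(z)+1$; then $g:=f+1$ satisfies $g(z+c)=2g(z)$, so $g(z)=2^{z/c}\pi(z)$ with $\pi$ of period $c$, and the constraints $\rho_2(g)<1$ together with $N(r,1/h)=N(r,1/g^2)=S$ force $\pi$ to be constant, yielding $f(z)=e^{az+b}-1$ with $e^{ac}=2$, which is case (i). Or this special relation fails, in which case the Second Main Theorem bookkeeping forces all but $S(r,f)$ of the zeros of $f'$ to contribute, i.e.\ $\overline N(r,1/f')=T(r,f)+S(r,f)$, which is case (ii).

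\emph{Main obstacle.} Stages 1–2 are comparatively routine applications of difference Nevanlinna theory. The real work is Stage 3: making the ``$u$ is essentially a square'' reduction precise (a canonical-product factorisation with controlled hyper-order, and carrying the small error factor cleanly through the equation), verifying that $h'$ inherits $N(r,\cdot)+N(r,1/\cdot)=S$, and then separating the two alternatives for $f'$ without losing constants. Handing the precise threshold between case (i) and case (ii) — essentially, why the failure of $f'(z+c)=2f'(z)$ forces $f'$ to have the \emph{maximal} number of (simple) zeros rather than merely some positive density — is the step I expect to require the most care.
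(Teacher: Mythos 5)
You should first be aware that the paper contains no proof of Theorem~\ref{thC}: it is quoted verbatim from \cite[Theorem 2.2]{Laine} as background, so there is no in-paper argument to compare against and I can only judge your proposal on its own terms. On those terms, Stages~1--2 are essentially sound and standard: the pole-chain argument (geometric growth of pole orders forcing $\rho_2(f)\ge 1$ unless every chain is absorbed by a pole of $h$) does give $N(r,f)=S(r,f)$, and the two applications of the second main theorem to $u=f^n/h$ --- the truncated one killing $m\ge 2$, the sharper one with $\overline N(r,1/(u-1))\le T(r,f)+S$ killing $n\ge 3$, $m=1$ via $(n^2-3n+1)T(r,f)\le S(r,f)$ --- are correct, modulo the routine check that points where zeros of $f$ meet zeros of $h$ contribute only $S(r,f)$ to the truncation estimates.

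The genuine gap is Stage~3, which is where the entire content of the dichotomy (i)/(ii) lives, and which as written is a restatement of the goal rather than a derivation. Concretely: (a) the assertion that the zeros of $f$ are simple off an $S$-set is not justified --- it is true, but it requires an auxiliary-function argument (e.g.\ introducing $\alpha=h'/h$ and $\gamma=\overline{f'}/\overline f-\alpha$, both of characteristic $S(r,f)$, rewriting the differentiated equation as $h\gamma=f\bigl(f(\alpha+\gamma)-2f'\bigr)$, disposing of $\gamma\equiv 0$ via the quadratic recursion $f(z+c)=\mathrm{const}\cdot f(z)^2$ and $\rho_2<1$, and then comparing zero multiplicities on both sides), none of which appears in your sketch; (b) the second main theorem applied to $w=2ff'/h'$ with $w+\overline{f'}/h'=1$ yields only an inequality of the shape $T(r,w)\le \overline N(r,1/f)+2\overline N(r,1/f')+S(r,f)$, which by itself produces neither the alternative $\overline{f'}=2f'$ nor the exact equality $\overline N\bigl(r,\tfrac{1}{f'}\bigr)=T(r,f)+S(r,f)$ --- and certainly not with the count restricted to \emph{simple} zeros of $f'$, a refinement your sketch never addresses. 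The crux of the theorem is identifying the specific small function whose identical vanishing separates case (i) from case (ii), and that function is missing from the proposal. A smaller omission in the same stage: $f'(z+c)=2f'(z)$ integrates only to $f(z+c)=2f(z)+C$, and one must still rule out $C\ne 1$ (e.g.\ because $h=(f+1)^2+(C-1)$ would then have $\overline N(r,1/h)\ge T(r,f)+S(r,f)$ by the second main theorem applied to $f$, contradicting $N(r,1/h)=S(r,h)$) before concluding $h=(f+1)^2$ and hence $e^{ac}=2$.
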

\vspace{6pt}

Motivated by the fact that $h$ satisfies $N(r,h)+N(r,\frac{1}{h})=S(r,h)$ in  Theorem \ref{thC}, it is natural to ask the following question.

\vspace{6pt}
\begin{Que}
	\label{q1}
	What can we say about the meromorphic solutions of \eqref{0.1.3}  provided $h$ satisfies $N(r,h)=S(r,h)$ and $N(r,\frac{1}{h})=O(T(r,h))$ ?
\end{Que}
\vspace{6pt}

For Question \ref{q1}, Qi et al.\cite{Qxg} considered the solutions of  \eqref{0.1.3} when  $h(z)=p_1(z)e^{a_1(z)}+p_2(z)e^{a_2(z)}$ is a special type of function that satisfies $N(r,h)=S(r,h)$ and $N(r,\frac{1}{h})=O(T(r,h))$, where $k\ge1$ is an integer, $p_i$ are nonzero entire functions of order less than $k$, and  $a_i$ are nonzero polynomials, $i=1,2$. Upon careful inspection of Qi et al.\cite[Theorem 1.1]{Qxg}, the result of the theorem can be summarized and expressed as follows.

\vspace{6pt}
\begin{thmx}
	\label{thD}\cite[Theorem 1.1]{Qxg}
	Let $k\ge1$, $m$ and $n$  be  positive integers satisfy $n>m+2$. Suppose meromorphic function $f$ with $\rho_2(f)<1$ and $N(r,f)=S(r,f)$ is a  solution of   equation
	\begin{align}
		\label{0.1.4.1}
		f^n(z)+(f(z+c))^m=p_1(z)e^{a_1z^k}+p_2(z)e^{a_2z^k},
	\end{align}
	where $p_i$, are nonzero entire functions of order less than $k$, and  $a_i$ are nonzero constants, $a_1\not=a_2$, $i=1,2$. Then
	either:
	\begin{itemize}
		\item[\rm{(i)}]$f(z)=B_1(z)e^{\frac{a_1z^k}{n}}$, $B_1^n(z)=p_1(z)$  and $\frac{a_1m}{n}=a_2$ or
		\item[\rm{(ii)}]$f(z)=B_2(z)e^{\frac{a_2z^k}{n}}$, $B_2^m(z)=p_2(z)$ and $\frac{a_2m}{n}=a_1$.
	\end{itemize}
\end{thmx}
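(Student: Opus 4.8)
\medskip

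\noindent\textit{A plan for the proof.} Write $g(z):=f(z+c)$ and $H_i:=p_ie^{a_iz^k}$, so that the equation becomes $f^n+g^m=H_1+H_2=:h$, with $h$ entire. Since $\rho_2(f)<1$, the difference results apply: $T(r,g)=T(r,f)+S(r,f)$, $N(r,g)=N(r,f)+S(r,f)$, and $m(r,f'/f)=m(r,g'/g)=m(r,g/f)=S(r,f)$. \emph{Step 1 (order).} Because each $p_i$ has order $<k$ while $e^{a_iz^k}$ has order $k$, $h$ is entire of order exactly $k$ with $T(r,h)\asymp r^k$ (indeed $|h|$ is already large on a sector where one of $\operatorname{Re}(a_iz^k)$ strictly dominates). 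Valiron--Mokhon'ko applied to $f^n=h-g^m$, together with $T(r,g)=T(r,f)+S(r,f)$, gives $(n-m)T(r,f)\le T(r,h)+S(r,f)$; conversely $T(r,h)\le(n+m)T(r,f)+S(r,f)$. Hence $T(r,f)\asymp r^k$, $\rho(f)=k$, $f$ is transcendental, and $S(r,f)=S(r,h)$. \emph{Step 2 ($f$ is entire).} If $f$ had a pole at $w$ of order $q\ge1$, then $g^m$ would have a pole of order $mq$ at $w-c$; as $h$ is entire, $f^n=h-g^m$ would then have a pole of order exactly $mq$ at $w-c$, so $f$ would have a pole there of order $mq/n$, necessarily a positive integer and strictly less than $q$ because $m<n$. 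Iterating forces an infinite strictly decreasing sequence of positive integers (the pole orders at $w-jc$, $j\ge0$), which is absurd. Hence $f$, and so $g$, is entire; in particular the hypothesis $N(r,f)=S(r,f)$ holds automatically.

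\medskip

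\noindent\textit{Step 3 (alignment --- the crux).} The aim is to prove
\[
\min\bigl\{\,T(r,f^n/H_1),\ T(r,f^n/H_2)\,\bigr\}=S(r,f),
\]
i.e.\ that $f^n$ agrees, up to a small factor, with one of the two exponential terms. Applying to the equation the operator $\mathcal L_i:=\dfrac{d}{dz}-\dfrac{H_i'}{H_i}$ (which annihilates $H_i$) and writing $L_i:=\dfrac{H_i'}{H_i}=\dfrac{p_i'}{p_i}+a_ikz^{k-1}$ --- a small function, with $L_1-L_2\not\equiv0$ since $a_1\ne a_2$ --- one obtains, for $\{i,j\}=\{1,2\}$,
\[
f^n\Bigl(n\tfrac{f'}{f}-L_i\Bigr)+g^m\Bigl(m\tfrac{g'}{g}-L_i\Bigr)=H_j\,(L_j-L_i).
\]
Dividing by $f^n$, solving for $H_j/f^n$, and using $m(r,f'/f)=m(r,L_i)=m\bigl(r,\tfrac1{L_1-L_2}\bigr)=S(r,f)$ together with $g^m/f^n=(g/f)^mf^{-(n-m)}$ and $m(r,g/f)=S(r,f)$, one gets $m(r,H_j/f^n)\le(n-m)\,m(r,1/f)+S(r,f)$; since $f$ is entire, $N(r,H_j/f^n)\le n\,N(r,1/f)$, whence
\[
T(r,f^n/H_j)\le(n-m)\,T(r,f)+m\,N(r,1/f)+S(r,f).
\]
Turning this partial bound into the asserted smallness is the hard part, and it is precisely here that the strict gap $n>m+2$ (not merely $n>m$, which already sufficed for Step 2) is used: via a Clunie/Borel-type argument one plays the degree-$n$ block $f^n$ against the degree-$m$ block $g^m$ --- after discarding the bounded contribution of the zeros and poles of $p_1p_2(L_1-L_2)$ --- to conclude that the only way the two modes $e^{a_1z^k},e^{a_2z^k}$ on the right can be balanced is for $f^n$ to carry one of them by itself. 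I expect this estimate to be the main obstacle: it must be run for $j=1$ and $j=2$ simultaneously, the interaction of the logarithmic-derivative terms with the transcendental coefficients $H_j$ must be controlled, and one must check that the surplus $n-m\ge3$ reduces the error to $S(r,f)$.

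\medskip

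\noindent\textit{Step 4 (identification).} Suppose $T(r,f^n/H_1)=S(r,f)$ (the case of $H_2$ is symmetric). As $f$ is entire, $B_1:=fe^{-a_1z^k/n}$ is entire and $B_1^n=f^ne^{-a_1z^k}=p_1\cdot(f^n/H_1)$, so $T(r,B_1)=S(r,f)$ and $f=B_1e^{a_1z^k/n}$. Then $f^n=B_1^ne^{a_1z^k}$ and $g^m=f(z+c)^m=B_1(z+c)^me^{\frac{ma_1}{n}(z+c)^k}=q_1(z)\,e^{\frac{ma_1}{n}z^k}$ with $T(r,q_1)=S(r,f)$ and $q_1\not\equiv0$. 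Substituting into $f^n+g^m=p_1e^{a_1z^k}+p_2e^{a_2z^k}$ and invoking the Borel-type uniqueness of exponential sums (if $\sum_lc_le^{b_lz^k}\equiv0$ with distinct constants $b_l$ and $T(r,c_l)=S(r,f)$, then all $c_l\equiv0$), the exponents $a_1,\ \frac{ma_1}{n},\ a_2$ cannot be pairwise distinct, so $\frac{ma_1}{n}=a_2$ (note $\frac{ma_1}{n}\ne a_1$, since $m\ne n$ and $a_1\ne0$); comparing the remaining coefficients then yields $B_1^n=p_1$. This is alternative (i). If instead $T(r,f^n/H_2)=S(r,f)$, the same argument gives $f=B_2e^{a_2z^k/n}$ with $B_2^n=p_2$ and $\frac{ma_2}{n}=a_1$, which is alternative (ii).
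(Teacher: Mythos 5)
Your Steps 1, 2 and 4 are essentially sound (Step 2 is a nice observation: the descending pole-order iteration at $w-jc$ really does force $f$ to be entire, so the hypothesis $N(r,f)=S(r,f)$ is automatic for this equation). But the proposal has a genuine gap exactly where you flag one: Step 3 is a statement of intent, not an argument. The identity produced by applying $\frac{d}{dz}-\frac{H_i'}{H_i}$ only gives a bound of the shape $T(r,f^n/H_j)\le (n-m)T(r,f)+mN(r,1/f)+S(r,f)$, and since $T(r,f^n/H_j)$ is itself only $O(r^k)=O(T(r,f))$ a priori, this inequality carries no information; nothing in what you wrote forces $\min_j T(r,f^n/H_j)=S(r,f)$. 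The promised ``Clunie/Borel-type argument'' exploiting $n-m\ge 3$ is never exhibited, and it is precisely the content of the theorem: without it, Steps 1--2 and 4 prove nothing. (A small secondary point: in alternative (ii) you correctly derive $B_2^{\,n}=p_2$; the statement's $B_2^{\,m}=p_2$ appears to be a typo, as the paper's own computation in the proof of Theorem \ref{th1.1} also yields $B_2^{\,n}=p_2$.)

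For comparison: this statement is quoted from Qi--Yang and the present paper does not reprove it, but its proofs of Theorem \ref{th1.1} and Corollary \ref{co2} close exactly this kind of gap for the same equation (under different exponent conditions) by a different device, namely Cartan's second main theorem (Lemmas \ref{lm1.1}--\ref{lm1.2}). One multiplies through by a small auxiliary function $g$ so that $gf^n$, $g(f(z+c))^m$, $gp_1e^{a_1z^k}$, $gp_2e^{a_2z^k}$ are entire with no common zeros, and splits according to whether $(f(z+c))^m$, $p_1e^{a_1z^k}$, $p_2e^{a_2z^k}$ are linearly dependent. In the dependent case the linear relation immediately aligns $f^n$ with a single exponential term and your Step 4 finishes; in the independent case Cartan's theorem gives
\begin{align*}
nN\left(r,\tfrac{1}{f}\right)\le T(r)\le 2N\left(r,\tfrac{1}{f(z+c)}\right)+2N\left(r,\tfrac{1}{f}\right)+S(r,f),
\end{align*}
which forces $N(r,1/f)=S(r,f)$ and then $T(r)=S(r,f)$, a contradiction. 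To complete your write-up you must either supply the missing Clunie-type estimate in Step 3 or replace it by an argument of this Cartan/second-main-theorem type; as it stands the crux of the proof is absent.
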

\vspace{6pt}

Later, Zhu et al.\cite{zx}  considered the meromorphic solution $f$ to Fermat type delay-differential equation
\begin{align}
	\label{0.1.5}
	f^n(z)+(f'(z+c))^m=p_1(z)e^{a_1z^k}+p_2(z)e^{a_2z^k},
\end{align}
	under the assumptions of Theorem \ref{thD}, and obtained the next result.

\vspace{6pt}
\begin{thmx}
	\label{the}\cite[Theorem 1.2]{zx}
	Under the assumptions of Theorem \ref{thD}. If equation \eqref{0.1.5} has a meromorphic function $f$ with $\rho_2(f)<1$ and $N(r,f)=S(r,f)$, then $f$ still satisfies the conclusions of Theorem \ref{thD}.
\end{thmx}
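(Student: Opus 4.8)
The plan is to run the argument along the lines of the proof of Theorem~\ref{thD}, the genuine new feature being that the term $(f(z+c))^m$ is replaced by $(f'(z+c))^m$. Throughout put $g(z):=f'(z+c)$ and $H(z):=p_1(z)e^{a_1z^k}+p_2(z)e^{a_2z^k}$. \emph{Step 1 (growth and a degenerate case).} Since $\rho_2(f)<1$ and $N(r,f)=S(r,f)$, the shift and logarithmic derivative lemmas of Chiang--Feng and Halburd--Korhonen--Tohge give $T(r,g)\le T(r,f)+S(r,f)$, $m(r,g/f)=S(r,f)$ and $N(r,g^m)=S(r,f)$. Because $a_1\ne a_2$ and the $p_i\not\equiv0$ have order $<k$, one has $N(r,H)=0$ and $T(r,H)=(c_0+o(1))\,r^k$ for some $c_0>0$. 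Comparing the two sides of $f^n+g^m=H$ and using $n>m+2$ forces $\rho(f)=k$, in fact $T(r,f)\asymp r^k$; in particular $S(r,f)=S(r,g)$ and all the order-$<k$ coefficients below are $S(r,f)$. A separate short argument rules out $g^m\equiv\lambda H$ with $\lambda$ constant: this would give $f^n\equiv(1-\lambda)H$, yet $H$ has $(c_0'+o(1))r^k$ simple zeros because a common zero of $H$ and $H'$ must be a zero of the order-$<k$ function $p_1p_2'-p_2p_1'+(a_2-a_1)kz^{k-1}p_1p_2$, which is not identically zero; this contradicts that every zero of $f^n$ has multiplicity $\ge n\ge2$.

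\emph{Step 2 (the single exponential form; the main obstacle).} The heart of the proof is to show that $f(z)=B(z)e^{bz^k}$ for some $b\in\mathbb C\setminus\{0\}$ and some $B$ of order $<k$. Using $N(r,f)=S(r,f)$ one first writes $f$ as an entire function of order $k$ divided by a canonical product of characteristic $S(r,f)$, so it suffices to bound the convergence exponent of the zeros of the numerator below $k$ and to identify the polynomial of its Hadamard factorization as a monomial. Here is where $n>m+2$ is spent: applying to $f^n+g^m=H$ a first-order operator $\tfrac{d}{dz}-a_jkz^{k-1}$ (which, after eliminating the other exponential against the original equation, turns the right-hand side into a single term $P(z)e^{a_jz^k}$ with $P$ of order $<k$, hence $N\!\left(r,\tfrac1{Pe^{a_jz^k}}\right)=S(r,f)$) yields a relation $f^{n-1}L_f+g^{m-1}L_g=Pe^{a_jz^k}$ with $L_f,L_g$ linear in $(f,f')$, resp.\ $(g,g')$, and with coefficients that are $S(r,f)$. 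Since $n-1>m+1$, a Clunie/Tumura--Clunie-type estimate shows $f^n$ coincides, up to an order-$<k$ factor, with one of $p_1e^{a_1z^k}$, $p_2e^{a_2z^k}$, which pins down both the convergence exponent of the zeros of $f$ (it is $<k$) and the exponent, giving $f=Be^{bz^k}$. I expect this to be the real difficulty: $g=f'(z+c)$ is \emph{not} a differential polynomial in $f$, so the classical Tumura--Clunie theorem is not directly available, and — unlike in Theorem~\ref{thC} — here $N(r,1/H)\asymp r^k$ is not $S(r,f)$, so the ``$N(r,1/h)=S(r,h)$'' machinery cannot be invoked; one has to combine the difference/differential Clunie lemma (valid because $\rho_2(f)<1$) with the elimination trick above and the strict inequality $n>m+2$.

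\emph{Step 3 (Borel's theorem and conclusion).} Substituting $f=Be^{bz^k}$ into \eqref{0.1.5} gives $B^ne^{nbz^k}+V(z)e^{mbz^k}=p_1e^{a_1z^k}+p_2e^{a_2z^k}$, where $V(z)=\bigl(B'(z+c)+bk(z+c)^{k-1}B(z+c)\bigr)^m e^{mb((z+c)^k-z^k)}$ has order $<k$ (note $\deg\bigl((z+c)^k-z^k\bigr)=k-1$), $V\not\equiv0$ (else $f'\equiv0$, contradicting $\rho(f)=k$), and $B^n\not\equiv0$. Since $b\ne0$ and $n\ne m$ we have $nb\ne mb$; as $p_1,p_2\not\equiv0$, Borel's theorem on exponential polynomials with coefficients of order $<k$ forbids $nb,mb,a_1,a_2$ from being pairwise distinct, and with $a_1\ne a_2$ this forces $\{nb,mb\}=\{a_1,a_2\}$. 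If $nb=a_1$ and $mb=a_2$, comparing coefficients of equal exponents yields $B^n=p_1$, $b=a_1/n$ and $\tfrac{a_1m}{n}=a_2$, i.e.\ case (i) with $B_1=B$; if $nb=a_2$ and $mb=a_1$, one symmetrically obtains $B^n=p_2$, $b=a_2/n$ and $\tfrac{a_2m}{n}=a_1$, i.e.\ case (ii) with $B_2=B$. Hence $f$ satisfies the conclusions of Theorem~\ref{thD}, as required.
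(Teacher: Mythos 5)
Your Step 1 and Step 3 are sound (the simple-zeros argument for $H$ and the Borel analysis forcing $\{nb,mb\}=\{a_1,a_2\}$ both check out), but Step 2 --- which you yourself identify as ``the heart of the proof'' --- is not a proof: it is a statement of the difficulty. The claim that the relation $f^{n-1}L_f+g^{m-1}L_g=Pe^{a_jz^k}$ together with $n-1>m+1$ yields, via ``a Clunie/Tumura--Clunie-type estimate,'' that $f^n$ agrees with $p_je^{a_jz^k}$ up to an order-$<k$ factor is exactly what needs to be established, and the obstacles you list are real: $g=f'(z+c)$ is not a differential polynomial in $f$ at the point $z$, and the right-hand side $Pe^{a_jz^k}$ satisfies $T(r,Pe^{a_jz^k})\asymp r^k\asymp T(r,f)$, so it is \emph{not} a small function of $f$ and the standard Tumura--Clunie hypotheses fail on both counts. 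Acknowledging that ``one has to combine'' certain lemmas is not the same as combining them; as written, the passage from the eliminated equation to $f=B(z)e^{bz^k}$ is a genuine gap, and it is precisely the step where the hypothesis $n>m+2$ must do its work.

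For comparison, the route taken in this paper (in the proof of Theorem \ref{th1.1}, which treats the same equation under overlapping hypotheses) avoids this difficulty entirely. It first splits according to whether $a(f^{(l)}(z+c))^m$, $p_1e^{a_1z^k}$, $p_2e^{a_2z^k}$ are linearly dependent: in the dependent case one lands directly in conclusions (i)--(ii) after an application of Theorem \ref{thb} (note your degenerate case only covers $b_1=b_2$, whereas the solutions themselves arise from dependence with $b_1\ne b_2$); in the independent case one multiplies through by an auxiliary small function $g$ to make all four terms entire and zero-free in common, applies Cartan's second main theorem (Lemmas \ref{lm1.1}--\ref{lm1.2}) to get $nN(r,1/f)\le 4N(r,1/f)+S(r,f)$, concludes $N(r,1/f)=S(r,f)$, and then uses Weierstrass factorization plus Lemma \ref{lm3.5} to reach a contradiction. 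That argument replaces your missing Tumura--Clunie step with a counting-function inequality that is actually available under the stated hypotheses. I would recommend either adopting that strategy or supplying a complete delay-differential Clunie argument for your Step 2; until then the proposal is an outline rather than a proof.
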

\vspace{6pt}

We find that the following equations have entire solutions, but these equations do not satisfy the condition  $n>m+2$ in Theorems \ref{thD} and \ref{the}.

\vspace{6pt}
\begin{exa}
	\label{e1}
	$f(z)=e^{iz}$ is a solution to $f^5(z)+(f{(z+2\pi)})^4=e^{5iz}+e^{4iz}$,  $f^5(z)+(f'{(z+2\pi)})^4=e^{5iz}+e^{4iz}$ and $f^4(z)+(f^{(2)}{(z+2\pi)})^5=e^{4iz}-e^{5iz}$, respectively.
\end{exa}
\vspace{6pt}

Therefore it is natural to ask the following question.

\vspace{6pt}
\begin{Que}
	\label{q22}
	Can we relax the condition $n>m+2$ in Theorems \ref{thD} and \ref{the} ?
\end{Que}
\vspace{6pt}

Firstly,  we give a positive answer to the  Question \ref{q22}.

\vspace{6pt}
\begin{thm}
	\label{th1.1}
	Let $l$, $k\ge1$, $m$ and $n$  be  integers satisfy $n>4$ and $m\ge2$ or $n=4$ and $m>4$. Suppose meromorphic function $f$ with $\rho_2(f)<1$ and $N(r,f)=S(r,f)$ is a  solution of   equation
	\begin{align}
		\label{0.1.4}
		f^n(z)+a(f^{(l)}(z+c))^m=p_1(z)e^{a_1z^k}+p_2(z)e^{a_2z^k},
	\end{align}
	where $p_i$, are nonzero entire functions of order less than $k$,  $a$ and  $a_i$ are nonzero constants, $a_1\not=a_2$, $i=1,2$.
	Then, either:
	\begin{itemize}
		\item[\rm{(i)}]$f(z)=B_1(z)e^{\frac{a_1z^k}{n}}$, $B_1^n(z)=p_1(z)$  and $\frac{a_1m}{n}=a_2$ or
		\item [\rm{(ii)}]$f(z)=B_2(z)e^{\frac{a_2z^k}{n}}$, $B_2^m(z)=p_2(z)$ and $\frac{a_2m}{n}=a_1$.
	\end{itemize}
\end{thm}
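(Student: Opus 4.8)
\emph{Proof proposal.} The plan is to prove that \eqref{0.1.4} forces $f^{n}\equiv\psi_{1}$ or $f^{n}\equiv\psi_{2}$, writing throughout $\psi_{i}(z):=p_{i}(z)e^{a_{i}z^{k}}$ and $G(z):=f^{(l)}(z+c)$, so that \eqref{0.1.4} reads $f^{n}+aG^{m}=\psi_{1}+\psi_{2}$; the two normal forms for $f$ will then follow by a single integration. \emph{Step 1 (growth).} Since $\rho_{2}(f)<1$, the difference form of the lemma on the logarithmic derivative and the shift invariance of the characteristic give $m\!\left(r,f^{(j)}(z+c)/f(z)\right)=S(r,f)$ and $T(r,f(z+c))=T(r,f)+S(r,f)$; together with $N(r,f)=S(r,f)$ this yields $N(r,G)=S(r,f)$ and $T(r,G)=T(r,f)+S(r,f)$. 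Because $a_{1}\neq a_{2}$ and $\rho(p_{i})<k$, the right-hand side of \eqref{0.1.4} satisfies $N(r,\psi_{1}+\psi_{2})=S(r,f)$, $N\!\left(r,1/(\psi_{1}+\psi_{2})\right)=T(r,\psi_{1}+\psi_{2})+S(r,f)$ (second main theorem for $\psi_{2}/\psi_{1}$), and $T(r,\psi_{1}+\psi_{2})$ has exact order $k$. Feeding this into \eqref{0.1.4} via a Mokhon'ko-type lemma for differential-difference polynomials gives $\rho(f)=k$ and $T(r,f)\asymp r^{k}$, so that $p_{i}$, $p_{i}'/p_{i}$, every polynomial of degree $<k$, and the $\delta_{i},\gamma$ below are $S(r,f)$.

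\emph{Step 2 (eliminating one exponential).} Put $\delta_{i}:=\psi_{i}'/\psi_{i}=p_{i}'/p_{i}+a_{i}kz^{k-1}$ and $\gamma:=\delta_{1}-\delta_{2}$, which is $\not\equiv0$ because $a_{1}\neq a_{2}$. Differentiating \eqref{0.1.4} and solving the resulting $2\times2$ system for $\psi_{1},\psi_{2}$ gives
\begin{equation}
\gamma\,\psi_{1}=f^{n-1}\bigl(nf'-\delta_{2}f\bigr)+a\,G^{m-1}\bigl(mG'-\delta_{2}G\bigr),\label{plan-A}
\end{equation}
together with the companion identity for $\psi_{2}$ (replace $\delta_{2}$ by $\delta_{1}$ and $\gamma$ by $-\gamma$). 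The key point is that $nf'-\delta_{i}f\equiv0$ is equivalent — by integrating $(\log f)'=\delta_{i}/n$ — to $f=B(z)e^{a_{i}z^{k}/n}$ with $\rho(B)<k$, and likewise $mG'-\delta_{i}G\equiv0$ forces such a form for $G$. In any of these four degenerate cases, substituting the resulting expression back into \eqref{0.1.4} and applying Borel's theorem (using $p_{2}\not\equiv0$ and $a_{1}\neq a_{2}$) forces $f^{n}\equiv\psi_{1}$ or $f^{n}\equiv\psi_{2}$; since $f$ is then entire and $f^{n}e^{-a_{j}z^{k}}=p_{j}$ has order $<k$, one obtains $f=B_{j}(z)e^{a_{j}z^{k}/n}$ with $B_{j}^{n}=p_{j}$, and matching the leading exponent $(a_{j}m/n)z^{k}$ of $aG^{m}$ with that of the remaining term gives $a_{j}m/n=a_{3-j}$, which is conclusion (i) or (ii). So it suffices to derive a contradiction under the standing assumption
\begin{equation}
nf'-\delta_{i}f\not\equiv0\quad\text{and}\quad mG'-\delta_{i}G\not\equiv0\qquad(i=1,2).\nonumber
\end{equation}

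\emph{Step 3 (ruling out the non-degenerate case).} First, if $f^{n}\equiv aG^{m}$ then $2f^{n}=\psi_{1}(1+\psi_{2}/\psi_{1})$, so, up to an $S(r,f)$ term, the zeros of $f^{n}$ are those of $1+\psi_{2}/\psi_{1}$, which are simple with counting function $T(r,\psi_{2}/\psi_{1})+S(r,f)\asymp r^{k}$ — impossible, since every zero of $f^{n}$ has multiplicity $\ge n\ge2$. In the remaining case I would set $U:=f^{n}/\psi_{1}$ and apply the second main theorem to $U$ with targets $0,1,\infty$: the pole term is $S(r,f)$; the zero term is $\le\tfrac1n T(r,U)+S(r,f)$ because every zero of $f^{n}$ is $n$-fold; and in the $1$-point term the zeros of $U-1$ are the zeros of $\psi_{2}-aG^{m}$, which one controls by combining \eqref{plan-A}, the $m$-fold multiplicity of the zeros of $G^{m}$, and the analogous estimate for $aG^{m}/\psi_{2}$. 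Carrying out this bookkeeping contradicts $T(r,f)\asymp r^{k}$ as soon as $n\ge5$. When $n=4$ the hypothesis forces $m\ge5$, and the same argument with the roles of $f$ and $G$ (and of $n$ and $m$) interchanged — using $aG^{m}/\psi_{2}$ in place of $U$ — again gives a contradiction. Hence the non-degenerate case cannot occur, so one of the degenerate identities of Step 2 holds, and by the discussion there this yields conclusion (i) or (ii).

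\emph{The main obstacle} is Step 3. Because $\psi_{1}+\psi_{2}$ has a \emph{full} zero-counting function — in contrast with the hypothesis $N(r,1/h)=S(r,h)$ of Theorem \ref{thC} — one cannot simply conclude that $f$ has few zeros, and the whole improvement over the condition $n>m+2$ of Theorems \ref{thD} and \ref{the} rests on extracting, inside the second main theorem for $f^{n}/\psi_{1}$, the $n$-fold ramification of $f^{n}$ (and, when $n=4$, the $m$-fold ramification of $G^{m}$). Making the resulting counting estimate precise enough to reach exactly the thresholds $n\ge5$ and $n=4,\ m\ge5$ is where the real difficulty lies.
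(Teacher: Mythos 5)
Your Steps 1 and 2 are sound and run parallel to the paper: Step 1 corresponds to Lemma \ref{lm3.6} (which obtains $\mu(f)=k$ from Lemma \ref{lm3.2} and Theorem \ref{thb}), and the four degenerate identities of your Step 2 are exactly the paper's Case 1, in which $a(f^{(l)}(z+c))^m$ is a linear combination of $p_1e^{a_1z^k}$ and $p_2e^{a_2z^k}$; that part does deliver conclusions (i)--(ii). The genuine gap is Step 3, and it is not a matter of ``bookkeeping.'' In your second main theorem application to $U=f^n/\psi_1$, the decisive term is $\overline{N}\left(r,\frac{1}{U-1}\right)=\overline{N}\left(r,\frac{1}{\psi_2-aG^m}\right)+S(r,f)$, and $\psi_2-aG^m$ is a sum, not a power: its zeros are generically simple, so no ramification is available, and none of the tools you cite yields an upper bound better than the trivial $T(r,U)+O(1)$. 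The ``analogous estimate for $aG^m/\psi_2$'' only produces a \emph{lower} bound on this same quantity, because $V-1$ and $U-1$ share their zeros up to $S(r,f)$. Inserting the trivial bound $\overline{N}\left(r,\frac{1}{U-1}\right)\le mT(r,G)+O(r^k)$ gives only $(n-1-m)T(r,f)\le O(r^k)$, which is vacuous since $T(r,f)\asymp r^k$; in particular the case $n=5$, $m=2$, which your thresholds must cover, cannot be reached this way no matter how one exploits the $m$-fold zeros of $G^m$.

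The missing idea is to apply Cartan's second main theorem (Lemmas \ref{lm1.1} and \ref{lm1.2}) to the four-term relation $gf^n=-agG^m+gp_1e^{a_1z^k}+gp_2e^{a_2z^k}$ with the three functions on the right linearly independent. Cartan truncates at level $p-1=2$ on \emph{each} of the four functions separately, so the $n$-fold zeros of $f^n$, the $m$-fold zeros of $G^m$, and the smallness of $N(r,1/p_ie^{a_iz^k})$ are exploited simultaneously: one obtains
\begin{align*}
nN\left(r,\frac{1}{f}\right)\le T(r)\le 2N\left(r,\frac{1}{f^{(l)}(z+c)}\right)+2N\left(r,\frac{1}{f}\right)+S(r,f),
\end{align*}
whence $N(r,1/f)=S(r,f)$ when $n>4$, and then $T(r)\le 4N(r,1/f)+S(r,f)=S(r,f)$ contradicts $T(r)\asymp r^k$ (the case $n=4$, $m>4$ is symmetric, with $f^4$ moved to the right-hand side). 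The classical three-target second main theorem cannot reproduce this because it is forced to lump $\psi_2$ and $aG^m$ together into the single value $U=1$, which is precisely where your estimate breaks down; you correctly located the difficulty, but the plan as written does not resolve it.
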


\begin{rem} Example \ref{e1} illustrates the existence of solutions to \eqref{0.1.4} of Theorem \ref{th1.1} when $n=5$ and $m=4$ or $n=4$ and $m=5$.   What's more,
	\begin{itemize}
		\item[(1)] if $m=1$ and $n>m+2=3$, then the form of the meromorphic solution  $f$ with $\rho_2(f)<1$ and $N(r,f)=S(r,f)$ to equations \eqref{0.1.4.1} and \eqref{0.1.5} was given by Mao et al.\cite[Theorem 1.1]{mzq2022}. If $m\ge 2$, Theorem \ref{th1.1} relaxes the condition $n>m+2$ in Theorems \ref{thD} and \ref{the} into $n>4$ and $m\ge2$.
		\item[(2)] Since  $h(z)=p_1(z)e^{a_1z^k}+p_2(z)e^{a_2z^k}$ is a special type of function that satisfies $N(r,h)=S(r,h)$ and $N(r,\frac{1}{h})=O(T(r,h))$, then Theorem \ref{th1.1} gives a partial solution  of Question \ref{q1}.
		
	\end{itemize}
	
\end{rem}

We provide an example to illustrate that the condition $n>4$ and $m\ge2$ or $n=4$ and $m>4$ of the Theorem \ref{th1.1} is necessary.
\vspace{6pt}
\begin{exa}
	\label{e2}
	$f(z)=e^{iz}+e^{-iz}$ is a solution of $f^4(z)-(f'(z+2\pi))^4=8e^{2iz}+8e^{-2iz}$, where $n=4$ and $f$ dosen't conform to the form specified in Theorem \ref{th1.1}.
\end{exa}

If the Fermat type delay-differential equation \eqref{0.1.4} degenerates into the Fermat type difference equation \eqref{0.1.4.1}, then  the following corollary is obtained by  using  a similar method as in the proof of Theorem \ref{th1.1}.

\vspace{6pt}
\begin{cor}
	\label{co2}
	Let $k\ge1$, $n>4$ and $m\ge2$ or $m>4$ and $n\ge2$ be  integers. Suppose meromorphic function $f$ with $\rho_2(f)<1$ and $N(r,f)=S(r,f)$ is a  solution of   equation \eqref{0.1.4.1}.
	Then,  $f$ still satisfies the conclusions of Theorem \ref{thD}.
\end{cor}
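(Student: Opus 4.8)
The idea is to deduce Corollary \ref{co2} from Theorem \ref{th1.1} by exploiting the near-symmetry of \eqref{0.1.4.1} in the two exponents $n$ and $m$. Observe first that \eqref{0.1.4.1} is exactly \eqref{0.1.4} with $l=0$ and $a=1$; hence if $n>4$ and $m\ge 2$, Theorem \ref{th1.1} applies directly to $f$ and the assertion follows. So it suffices to treat the remaining case $m>4$ and $n\ge 2$.

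Let $f$ be a meromorphic solution of \eqref{0.1.4.1} with $\rho_2(f)<1$ and $N(r,f)=S(r,f)$. Replacing $z$ by $z-c$ in \eqref{0.1.4.1} and rearranging the two terms on the left, $f$ is also a solution of
\begin{equation}
	f^m(z)+\big(f(z-c)\big)^n=q_1(z)e^{a_1z^k}+q_2(z)e^{a_2z^k},\nonumber
\end{equation}
where $q_i(z):=p_i(z-c)\exp\!\big(a_i\big((z-c)^k-z^k\big)\big)$ for $i=1,2$. Since $(z-c)^k-z^k$ is a polynomial of degree $k-1<k$ (a nonzero constant when $k=1$) and $\rho(p_i)<k$, each $q_i$ is a nonzero entire function of order strictly less than $k$; moreover $\rho_2(f)$, $N(r,f)$ and $T(r,f)$ depend only on $f$, so the hypotheses $\rho_2(f)<1$ and $N(r,f)=S(r,f)$ are unchanged. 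Thus this new equation is again of the type \eqref{0.1.4.1}, i.e. of the type \eqref{0.1.4} with $l=0$, $a=1$ and delay $-c\ne 0$, but with the roles of $n$ and $m$ interchanged. Since $m>4$ and $n\ge 2$, Theorem \ref{th1.1} applies to it.

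Theorem \ref{th1.1} then yields one of two possibilities: either $f(z)=C_1(z)e^{a_1z^k/m}$ with $C_1^m(z)=q_1(z)$ and $a_1n/m=a_2$, or $f(z)=C_2(z)e^{a_2z^k/m}$ with $C_2^n(z)=q_2(z)$ and $a_2n/m=a_1$. In the first case $a_1/m=a_2/n$, so $f(z)=C_1(z)e^{a_2z^k/n}$; substituting this into \eqref{0.1.4.1} and using the definition of $q_1$, a short computation shows that the delayed term $\big(f(z+c)\big)^m$ reduces identically to $p_1(z)e^{a_1z^k}$, whence $f^n(z)=p_2(z)e^{a_2z^k}$ and therefore $C_1^n(z)=p_2(z)$ together with $a_2m/n=a_1$; this is alternative (ii) of Theorem \ref{thD}. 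The second case is symmetric: there $a_2/m=a_1/n$, so $f(z)=C_2(z)e^{a_1z^k/n}$, and substituting into \eqref{0.1.4.1} and comparing the coefficients of $e^{a_1z^k}$ and $e^{a_2z^k}$, which is legitimate by a Borel-type lemma since $a_1\ne a_2$ and all coefficients involved have order less than $k$, gives $C_2^n(z)=p_1(z)$ and $a_1m/n=a_2$, i.e. alternative (i) of Theorem \ref{thD}. This completes the proof.

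I do not expect a genuinely new difficulty: the analytic substance is entirely contained in Theorem \ref{th1.1}. The one step demanding care is the bookkeeping of how the entire coefficients $p_i$ transform under the translations $z\mapsto z\pm c$, and checking that the two output forms of Theorem \ref{th1.1} for the $(m,n)$-equation correspond precisely to alternatives (i) and (ii) of Theorem \ref{thD} for the original $(n,m)$-equation; the order bound $\rho(q_i)<k$ is exactly what makes the concluding coefficient comparison valid.
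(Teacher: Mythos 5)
Your proof is correct, but it takes a genuinely different route from the paper's. The paper does not deduce the corollary from Theorem \ref{th1.1} as a black box: it reruns the Cartan-type argument of Case 2 of that proof, and in the branch $m>4$, $n\ge2$ it invokes Lemma \ref{lm 1.3}(ii), $N\bigl(r,\tfrac{1}{f(z+c)}\bigr)=N\bigl(r,\tfrac{1}{f}\bigr)+S(r,f)$, to convert the Cartan estimate $mN\bigl(r,\tfrac{1}{f(z+c)}\bigr)\le 4N\bigl(r,\tfrac{1}{f}\bigr)+S(r,f)$ into $(m-4)N\bigl(r,\tfrac{1}{f}\bigr)\le S(r,f)$; this shift-invariance of the counting function is precisely what allows the restriction $n=4$ of Theorem \ref{th1.1} to relax to $n\ge2$ for the pure difference equation. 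You exploit the same phenomenon but package it as the substitution $z\mapsto z-c$, which interchanges the two terms so that only the first branch ($n>4$, $m\ge2$) of Theorem \ref{th1.1} is ever needed; your bookkeeping of the transformed coefficients $q_i$ and the back-translation to alternatives (i) and (ii) of Theorem \ref{thD} checks out (in fact the cancellation $(f(z+c))^m=p_1(z)e^{a_1z^k}$ is exact, so the concluding ``Borel-type'' comparison is not even needed). Two caveats worth recording: (a) your reduction requires Theorem \ref{th1.1} to be valid for $l=0$; its proof does support this, but the statement (``Let $l$, $k\ge1$, \dots'') arguably does not literally license it, which is presumably why the paper reproves rather than cites; (b) the intermediate relation ``$C_2^n=q_2$'' you quote inherits what appears to be a typo in Theorem \ref{th1.1}(ii) (its proof yields $B_2^n=p_2$, hence $C_2^m=q_2$ after the swap), but since you rederive $C_2^n=p_1$ by direct substitution into \eqref{0.1.4.1}, your final conclusion is unaffected.
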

\vspace{6pt}

We provide the following example to illustrate the existence of the conclusion of the Corollary \ref{co2}.

\vspace{6pt}
\begin{exa}
	\label{e3}
	$f(z)=e^{iz}$  is a solution of $f^5(z)+f^2(z+2\pi)=e^{5iz}+e^{2iz}$, and $f^2(z)+f^5(z+2\pi)=e^{2iz}+e^{5iz}$, where $n=5$ and $m=2$ or $n=2$ and $m=5$.
\end{exa}
\vspace{6pt}

We find that if $n=m>4$, then \eqref{0.1.4}  does not have  meromorphic solution $f$ with $\rho_2(f)<1$ and $N(r,f)=S(r,f)$. Otherwise, by Theorem \ref{th1.1}, we have  $a_1=a_2$, which contradicts with the assumption $a_1\neq a_2$ in Theorem \ref{th1.1}. Thus, we derive the following corollary.

\vspace{6pt}
\begin{cor}
	\label{c1}
	Let $l$, $k\ge1$, $n>4$, be  integers, and let  $p_i$ be nonzero entire functions of order less than $k$, $a$ and  $a_i$ be nonzero constants with $a_1\not=a_2$ for $i=1,2$. Then  equation
	\begin{align*}
		f^n(z)+a(f^{(l)}(z+c))^n=p_1(z)e^{a_1z^k}+p_2(z)e^{a_2z^k}
	\end{align*}
	has no meromorphic solution $f$ such that $\rho_2(f)<1$ and $N(r,f)=S(r,f)$.
\end{cor}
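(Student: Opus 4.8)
The plan is to deduce Corollary \ref{c1} directly from Theorem \ref{th1.1} by specializing to the case $m=n$. First I would observe that the equation in the corollary is precisely equation \eqref{0.1.4} with $m=n$: the nonlinear term $a(f^{(l)}(z+c))^n$ is the $m=n$ instance of $a(f^{(l)}(z+c))^m$, and the right-hand side $p_1(z)e^{a_1z^k}+p_2(z)e^{a_2z^k}$ is unchanged. The standing hypotheses also transfer verbatim: $k\ge1$ and $l$ are integers, the $p_i$ are nonzero entire of order less than $k$, $a$ and the $a_i$ are nonzero constants, and $a_1\neq a_2$. Finally, since $n>4$ we have in particular $n>4$ and $m=n\ge 2$, so the numerical requirement "$n>4$ and $m\ge2$" appearing in Theorem \ref{th1.1} is satisfied (this, rather than the alternative "$n=4$ and $m>4$", is the branch that applies once $m=n>4$).

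Next, arguing by contradiction, suppose $f$ is a meromorphic solution with $\rho_2(f)<1$ and $N(r,f)=S(r,f)$. Then Theorem \ref{th1.1} applies and forces one of its two conclusions. In case (i) we obtain $\frac{a_1 m}{n}=a_2$, and in case (ii) we obtain $\frac{a_2 m}{n}=a_1$; substituting $m=n$ into either relation gives $a_1=a_2$, which contradicts the hypothesis $a_1\neq a_2$. Hence no meromorphic solution $f$ with $\rho_2(f)<1$ and $N(r,f)=S(r,f)$ can exist, which is exactly the assertion of the corollary.

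Since the entire argument is a one-line specialization of Theorem \ref{th1.1}, there is no genuine obstacle: the proof of Corollary \ref{c1} is complete once the hypotheses of that theorem are checked in the case $m=n$. The only mild caveat worth recording is that the corollary's constraint $n=m$ is essential — for instance, the third equation in Example \ref{e1}, namely $f^4(z)+(f^{(2)}(z+2\pi))^5=e^{4iz}-e^{5iz}$, has $n=4\neq 5=m$ and therefore is not governed by Corollary \ref{c1}, so it is not a counterexample.
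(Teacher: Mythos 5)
Your proposal is correct and is essentially identical to the paper's own derivation: the authors likewise obtain Corollary \ref{c1} by setting $m=n$ in Theorem \ref{th1.1} and noting that either conclusion then forces $a_1=a_2$, contradicting $a_1\neq a_2$. Your hypothesis check ($n>4$ and $m=n\ge 2$) is the right branch, and the argument is complete.
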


\vspace{6pt}
\begin{rem}	\label{rm3}
	If $f$ is an entire function and $c=0$, then  Theorem \ref{th1.1} and  Corollary \ref{c1} still hold by using the similar method, without the condition $\rho_2(f)<1$. Therefore, Corollary \ref{c1} provides a positive answer to the following conjecture proposed by Gao et al.\cite{Gao}.
	
	\begin{con}\cite[Conjecture 2]{Gao}
		\label{con1}
		~~Let $l$ and $n$ be positive integers with $n > 4$, and let $p_i$, $a$, and $a_i$ be nonzero constants with $a_1 \neq a_2$ for $i = 1, 2$. Then the equation
		\begin{align*}
			f^n(z)+a(f^{(l)}(z))^n=p_1e^{a_1z}+p_2e^{a_2z}
		\end{align*}
		has no   entire solution.
	\end{con}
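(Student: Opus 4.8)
The plan is to obtain Conjecture~\ref{con1} from Theorem~\ref{th1.1}. Observe that the equation in the conjecture is exactly \eqref{0.1.4} in the special case $c=0$, $k=1$, $m=n$, with $p_1,p_2$ constant. An entire solution $f$ has $N(r,f)=0=S(r,f)$, so the only hypothesis of Theorem~\ref{th1.1} that is not automatic is $\rho_2(f)<1$; but with $c=0$ the equation involves no shift of $f$, the difference logarithmic derivative estimates are the sole ingredient of the proof of Theorem~\ref{th1.1} that requires $\rho_2(f)<1$, and when $c=0$ those estimates do not enter at all. Hence, as recorded in the first sentence of Remark~\ref{rm3}, the conclusion of Theorem~\ref{th1.1} holds for an entire solution of arbitrary order, so $f$ has one of the two forms listed there. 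Since $m=n$, the accompanying relation $\tfrac{a_1m}{n}=a_2$ (resp.\ $\tfrac{a_2m}{n}=a_1$) collapses to $a_1=a_2$ (resp.\ $a_2=a_1$), contradicting the hypothesis $a_1\neq a_2$. Therefore no entire solution exists.

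Thus the real work is the entire, $c=0$ analogue of Theorem~\ref{th1.1}, and I would argue as follows. Put $g=f^{(l)}$ and $h=p_1e^{a_1z}+p_2e^{a_2z}$, so that $f^n+ag^n=h$; a Wiman--Valiron argument near the point of maximum modulus, using that $h$ has order $1$, first gives $\rho(f)=1$. The decisive reduction is to show $f=Be^{\gamma z}$ for a nonzero constant $\gamma$ and an entire $B$ of order $<1$. For this, apply the first-order operators $\mathcal L_i[\varphi]=\varphi'-a_i\varphi$, which annihilate the terms $p_ie^{a_iz}$ individually: annihilating one term, recombining with the equation to remove the other, and then applying one more such operator to cancel the remaining single exponential, one is left with an identity
\begin{equation}
	f^{n-2}D_1(f)+a\,g^{n-2}D_2(g)=0,\nonumber
\end{equation}
where $D_1,D_2$ are second-degree differential polynomials with constant coefficients (indeed $D_1(w)=D_2(w)=-n(n-1)(w')^2-nww''+n(a_1+a_2)ww'-a_1a_2w^2$). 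If $D_1(f)\equiv0$, then $u=f'/f$ satisfies the constant-coefficient Riccati equation $u'=-nu^2+(a_1+a_2)u-a_1a_2/n$, whose only solutions yielding an entire $f$ with $n>1$ are the two constant ones $u\equiv a_1/n$ and $u\equiv a_2/n$; thus $f=Be^{\gamma z}$ with $\gamma\in\{a_1/n,a_2/n\}$. The case $D_2(g)\equiv0$ is handled identically. The remaining case $D_1(f)\not\equiv0\not\equiv D_2(g)$ must be excluded: the displayed identity then forces $f$ and $g=f^{(l)}$ to share essentially all of their (high-multiplicity) zeros, and combining this with the original equation via the second main theorem produces a contradiction. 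It is precisely this step that consumes the hypothesis $n>4$ (and, in Theorem~\ref{th1.1}, $m>4$ when $n=4$), and it is the main obstacle.

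Once $f=Be^{\gamma z}$ is known, write $g=f^{(l)}=\widetilde Be^{\gamma z}$ with $\widetilde B$ entire of order $<1$; the equation becomes $(B^n+a\widetilde B^n)e^{n\gamma z}=p_1e^{a_1z}+p_2e^{a_2z}$. As $B^n+a\widetilde B^n\not\equiv0$ (otherwise the left side vanishes while the right side does not), transposing gives a nontrivial vanishing linear combination of $e^{n\gamma z},e^{a_1z},e^{a_2z}$ with coefficients of order $<1$; since $p_1,p_2\neq0$ and $a_1\neq a_2$, this contradicts Borel's theorem, which is the desired contradiction. (For Theorem~\ref{th1.1} itself, with $m$ not necessarily equal to $n$, the same substitution leaves the four exponents $n\gamma,m\gamma,a_1,a_2$, and Borel's theorem forces them to coincide in pairs, which produces precisely alternatives (i) and (ii) together with their accompanying relations.)
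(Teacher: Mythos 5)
Your first paragraph is precisely the paper's route: the conjecture is the case $c=0$, $k=1$, $m=n$ of Theorem~\ref{th1.1}, Remark~\ref{rm3} records that the hypothesis $\rho_2(f)<1$ may be dropped for entire $f$ with $c=0$ (the only place it enters is the shift estimates of Lemma~\ref{lm 1.3}, which are vacuous without a shift), and the accompanying relation $\tfrac{a_1m}{n}=a_2$ with $m=n$ collapses to $a_1=a_2$, contradicting $a_1\neq a_2$ --- this is exactly Corollary~\ref{c1}. Had you stopped there, citing Theorem~\ref{th1.1}, the argument would be complete relative to the paper.

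The difficulty is your attempt to prove the underlying theorem, which is both a different route from the paper's and incomplete at its decisive step. Eliminating the exponentials with the operators $\varphi\mapsto\varphi'-a_i\varphi$ to reach $f^{n-2}D_1(f)+a\,g^{n-2}D_2(g)=0$ is a legitimate start, and your treatment of the degenerate cases $D_1(f)\equiv0$ or $D_2(g)\equiv0$ (the Riccati residue argument forcing $f'/f$ constant) is sound; but the generic case $D_1(f)\not\equiv0\not\equiv D_2(g)$, which you yourself concede is ``the main obstacle,'' is where the entire content of the theorem lies, and ``combining with the second main theorem produces a contradiction'' is not an argument --- especially since for general $l$ the zeros of $g=f^{(l)}$ bear no a priori relation to those of $f$, so no zero-sharing mechanism is available from the identity alone. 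The paper closes this case by a completely different device: it splits on whether $a(f^{(l)}(z+c))^m$, $p_1e^{a_1z^k}$, $p_2e^{a_2z^k}$ are linearly dependent (handled via Theorem~\ref{thb}, yielding conclusions (i)--(ii)) or independent, and in the independent case applies Cartan's second main theorem (Lemmas~\ref{lm1.1} and \ref{lm1.2}) to the four entire functions $gf^n$, $-ga(f^{(l)}(z+c))^m$, $gp_1e^{a_1z^k}$, $gp_2e^{a_2z^k}$; the truncated counting functions $N_2$ give $nN(r,\tfrac1f)\le T(r)+S(r,f)\le 4N(r,\tfrac1f)+S(r,f)$, so $n>4$ forces $N(r,\tfrac1f)=S(r,f)$, whence $f=\gamma e^{h}$ with $h$ a polynomial (Lemma~\ref{lm3.5} excluding transcendental $h$) and then $T(r)\le 4N(r,\tfrac1f)+S(r,f)=S(r,f)$, contradicting Lemma~\ref{lm1.2}. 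It is this $N_2$-truncation in Cartan's theorem, not your differential elimination, that consumes the hypothesis $n>4$. As written, your proposal does not establish the conjecture independently of Theorem~\ref{th1.1}.
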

\end{rem}

\vspace{6pt}
Example \ref{e2} demonstrates that the condition $n>4$ in the Corollary \ref{c1} cannot be further weakened. Therefore,
the following question is arisen.
\vspace{6pt}

\begin{Que}
	\label{q2}
	Let $l$, $k\ge1$, $n\le4$, be  integers, and let  $p_i$ be nonzero entire functions of order less than $k$, and $a$, $a_i$ be nonzero constants with $a_1\not=a_2$, $i=1,2$. Then, what  happen to the solutions of the Fermat type delay-differential equation
	\begin{align*}
		f^n(z)+a(f^{(l)}(z+c))^n=p_1(z)e^{a_1z^k}+p_2(z)e^{a_2z^k}?
	\end{align*}
\end{Que}

Gundersen et al.\cite{Gundersen} obtained some partial results for the case $n=2$, which are also related to the trigonometric identity $(\cos z)^2-(\sin z)^2=\cos 2z$.

\vspace{6pt}
\begin{thmx}\cite{Gundersen}
	\label{Thc}
	The only entire solutions of the differential equation
	\begin{align}
		\label{1.2}
		f^2(z)-(f'(z))^2=\frac{1}{2}e^{i2z}+\frac{1}{2}e^{-2iz}
	\end{align}
	are the four solutions $f=\cos z,~i\sin z$.
\end{thmx}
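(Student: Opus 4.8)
The plan is to factor the right-hand side, reduce everything to the claim that every entire solution satisfies the linear equation $f''+f=0$, and then finish by an elementary divisibility argument. Rewrite the equation as
\[
(f-f')(f+f')=\tfrac12 e^{2iz}+\tfrac12 e^{-2iz}=\cos 2z ,
\]
and set $P=f-f'$, $Q=f+f'$, so that $PQ=\cos 2z$ and, comparing $f'=\tfrac12(Q-P)$ with the derivative of $f=\tfrac12(P+Q)$, one gets the auxiliary identity $P+P'=Q-Q'$. A preliminary step is to show that any entire solution has finite order: from $P+P'=Q-Q'$ one expresses $P^{2}$ and $Q^{2}$ as $\cos 2z$ times a quotient of logarithmic derivatives, and a short computation with the lemma on the logarithmic derivative (run symmetrically in $P$ and $Q$) gives $T(r,P)+T(r,Q)=O(r)$, hence $\rho(f)\le 1$; the bound $\rho(f)\ge 1$ follows from the differentiated equation $f'(f-f'')=-\sin 2z$ together with $T(r,\sin 2z)\le 4T(r,f)+S(r,f)$.

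Next one exploits that $\cos 2z$ has only simple zeros, at the points $z_k=\tfrac{\pi}{4}+\tfrac{k\pi}{2}$, $k\in\mathbb Z$. Since $PQ=\cos 2z$, the entire functions $P$ and $Q$ have only simple zeros, no common zero, and $Z(P)\sqcup Z(Q)=\{z_k\}_{k\in\mathbb Z}$. The case in which $P$ (or $Q$) has finitely many zeros is excluded with Borel's theorem: if $P=R(z)e^{\alpha z}$ with $R$ a polynomial, substituting $Q=\cos 2z/P$ into $P+P'=Q-Q'$ and comparing the exponential terms $\tfrac12 e^{(2i-\alpha)z}$ and $\tfrac12 e^{(-2i-\alpha)z}$ forces incompatible frequency relations. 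Hence $P$ and $Q$ each have infinitely many simple zeros, all lying in the arithmetic progression $\{z_k\}$.

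The heart of the argument is to conclude that $P$ and $Q$ are trigonometric binomials $ae^{iz}+be^{-iz}$. One would compare Phragm\'en--Lindel\"of indicators: $PQ=\cos 2z$ gives $h_{P}+h_{Q}=h_{\cos 2z}(\theta)=2|\sin\theta|$, while $P+P'=Q-Q'$, combined with a growth comparison along the real axis, forces the exponential factor in Hadamard's factorization of $P$ to be trivial and $h_{P}=h_{Q}=|\sin\theta|$. Since in addition $Z(P)$ and $Z(Q)$ are real and partition $\{z_k\}$, the theory of functions of completely regular growth (functions of sine type in the sense of Levin) then shows that $P$ and $Q$ must be constant multiples of $\cos\!\bigl(z-\tfrac{\pi}{4}\bigr)$ or of $\cos\!\bigl(z+\tfrac{\pi}{4}\bigr)$, i.e.\ of $\cos z\pm\sin z$. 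I expect this structural step to be the main obstacle: the delicate point is to rule out entire functions of order one whose zeros form an irregularly distributed subset of $\{z_k\}$, and for this one must use that $P$ is a factor of the sine-type function $\cos 2z$ \emph{and} satisfies the differential relation, not merely that its zeros are real.

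To finish, substitute back. If $P=c\cos\!\bigl(z-\tfrac{\pi}{4}\bigr)$, then $PQ=\cos 2z=2\cos\!\bigl(z-\tfrac{\pi}{4}\bigr)\cos\!\bigl(z+\tfrac{\pi}{4}\bigr)$ gives $Q=\tfrac{2}{c}\cos\!\bigl(z+\tfrac{\pi}{4}\bigr)$, and then $P+P'=\sqrt2\,c\cos z$, $Q-Q'=\tfrac{2\sqrt2}{c}\cos z$, so $P+P'=Q-Q'$ forces $c^{2}=2$; hence $P=\pm(\cos z+\sin z)$, $Q=\pm(\cos z-\sin z)$ and $f=\tfrac12(P+Q)=\pm\cos z$. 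If instead $P=c\cos\!\bigl(z+\tfrac{\pi}{4}\bigr)$, the same computation yields $c^{2}=-2$ and $f=\pm i\sin z$. These four functions evidently solve the equation, so they are exactly the entire solutions. (Equivalently, everything above amounts to proving $f''+f\equiv 0$, i.e.\ that $f^{2}+(f')^{2}$ equals a constant $C$; then $f^{2}=\tfrac12(C+\cos 2z)$ must be the square of an entire function, and since $C+\cos 2z$ has only simple zeros unless $C=\pm 1$, one is forced to $C=1$, giving $f^{2}=\cos^{2}z$, or $C=-1$, giving $f^{2}=-\sin^{2}z$, whence $f=\pm\cos z$ or $f=\pm i\sin z$.)
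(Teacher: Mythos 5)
This statement is Theorem~\ref{Thc}, which the paper quotes from \cite{Gundersen} without proof, so there is no in-paper argument to measure yours against; the closest relative is the paper's own proof of Theorem~\ref{th1.2} in Section~4. Judged on its own terms, your proposal is sound in its outer layers: the factorization $PQ=\cos 2z$ with $P=f-f'$, $Q=f+f'$ and the compatibility relation $P+P'=Q-Q'$ are correct; the estimate $T(r,P)+T(r,Q)=O(r)$ can indeed be extracted from $P^{2}=\cos 2z\cdot(1-Q'/Q)/(1+P'/P)$ and its twin (with the usual care about $N(r,P'/P)$ and the exceptional set); the Borel argument ruling out $P=R(z)e^{\alpha z}$ works; and the terminal computations ($c^{2}=2$ giving $f=\pm\cos z$, $c^{2}=-2$ giving $f=\pm i\sin z$) are correct.

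The step you yourself flag as the main obstacle is, however, a genuine gap and not a removable technicality. Knowing that $P$ is of exponential type with indicator $|\sin\theta|$ and simple zeros forming a density-$\tfrac1\pi$ subset of $\{z_k\}=\{\tfrac{\pi}{4}+\tfrac{k\pi}{2}\}$ complementary to $Z(Q)$ does \emph{not} force $P=c\cos\bigl(z\mp\tfrac{\pi}{4}\bigr)$: for instance $\tilde P=(\cos z+\sin z)\,\frac{z-\pi/4}{z+\pi/4}$ and $\tilde Q=\cos 2z/\tilde P=(\cos z-\sin z)\,\frac{z+\pi/4}{z-\pi/4}$ are entire, of exponential type, have simple zeros partitioning $\{z_k\}$, satisfy $\tilde P\tilde Q=\cos 2z$ and have the same indicators, yet $\tilde P$ is not a trigonometric binomial. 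So every hypothesis you feed into the ``sine-type / completely regular growth'' step is also satisfied by functions that are not of the claimed form; the differential relation $P+P'=Q-Q'$ must carry the argument at exactly this point, and you never bring it to bear. The standard way to close the circle --- and the pattern the present paper uses throughout Section~4 --- is the one your final parenthesis gestures at: having shown $T(r,f)=O(r)$, differentiate the equation, form an auxiliary meromorphic quotient (here $f''/f$; in the paper, quotients such as $f'''/f'$ or $(f+3af'')/f'$) whose proximity function is $o(\log r)$ by Lemma~\ref{lm3.4}, conclude it is constant, and reduce to the linear equation $f''+f=0$; your observation that $f^{2}+(f')^{2}\equiv C$ forces $C=\pm 1$ then finishes cleanly. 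As written, that reduction is asserted rather than proved, so the proposal is incomplete.
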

\vspace{6pt}

For the case $n=4$, Gao et al. \cite{Gao}  obtained  the following result, which is connected to  $(\cos z)^4-(\sin z)^4=\cos2 z$.

\vspace{6pt}
\begin{thmx}\cite{Gao}
	\label{thd}
	The binomial differential equation
	\begin{align*}
		f^4(z)-(f'(z))^4=\frac{1}{2}e^{i2z}+\frac{1}{2}e^{-2iz}
	\end{align*}
	has the eight entire solutions $f_j(z)=e^{\frac{2j\pi i}{4}}\cos z$ and $f_k=e^{\frac{(2k+1))\pi i}{4}}\sin z$ for $j = 0, 1, 2, 3 $ and $ k = 0, 1, 2, 3.$
\end{thmx}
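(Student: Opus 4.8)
The eight functions listed are indeed solutions: this is the identity $\cos^{4}z-\sin^{4}z=\cos 2z$, together with the observations that replacing $\cos z$ by $\omega\cos z$ multiplies the left side by $\omega^{4}$, and replacing it by $\eta\sin z$ (so that $f'=\eta\cos z$) multiplies it by $-\eta^{4}$; hence one only needs $\omega^{4}=1$ and $\eta^{4}=-1$. The content of the theorem is that there are no further entire solutions. To prove this I would first determine the order of an arbitrary entire solution $f$. Writing the equation as $f^{4}\bigl(1-(f'/f)^{4}\bigr)=\cos 2z$ and applying a standard growth estimate (e.g. Wiman--Valiron theory) gives $\rho(f)\le 1$; on the other hand, since $\cos 2z=\prod_{j=0}^{3}(f-i^{j}f')$ and $\rho\bigl(\prod_{j}g_{j}\bigr)\le\max_{j}\rho(g_{j})$ for entire $g_{j}$, some factor $f-i^{j}f'$ has order $\ge 1$, so $\rho(f)\ge 1$. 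Thus $\rho(f)=1$.

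Next I would analyse zeros. A common zero $z_{0}$ of $f$ and $f'$ would be a zero of $f^{4}-(f')^{4}=\cos 2z$ of multiplicity at least $4$, impossible since $\cos 2z$ has only simple zeros; hence $f$ and $f'$ share no zeros and every zero of $f$ is simple. Factor $\cos 2z=\prod_{j=0}^{3}g_{j}$ with $g_{j}:=f-i^{j}f'$. Each $g_{j}$ is entire; at any zero of some $g_{j}$ the product vanishes, so every zero of each $g_{j}$ lies in the (simple) zero set $\Lambda:=\{(2k+1)\pi/4:k\in\mathbb Z\}$ of $\cos 2z$, and no two $g_{j}$ share a zero. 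By Hadamard's factorisation theorem, $g_{j}=e^{\alpha_{j}z+\beta_{j}}\Pi_{j}$, where $\Pi_{j}$ is the genus-one canonical product over a subset $S_{j}\subseteq\Lambda$; the $S_{j}$ partition $\Lambda$, and since $\Lambda$ is symmetric about the origin one gets $\prod_{j}\Pi_{j}=\cos 2z$, whence $\sum_{j}\alpha_{j}=0$.

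The heart of the argument is to show that $F:=f^{2}+(f')^{2}$ is a nonzero constant. Put $G:=f^{2}-(f')^{2}$, so $FG=\cos 2z$; then $F$ divides $\cos 2z$, so the zeros of $F$ are simple and lie in $\Lambda$, and $F=c\,e^{\gamma z}\widetilde\Pi$, $G=c^{-1}e^{-\gamma z}\widetilde\Pi^{\ast}$ with $\widetilde\Pi,\widetilde\Pi^{\ast}$ complementary canonical products over $\Lambda$. The extra rigidity comes from the facts that $F+G=2f^{2}$ and $F-G=2(f')^{2}$ are perfect squares of entire functions and that, from $(f^{2})'=2ff'$, one has $((F+G)')^{2}=4(F^{2}-G^{2})$. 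The plan is to combine these algebraic constraints with a Phragm\'en--Lindel\"of indicator estimate: the zeros of $F$ lie on the real axis, so the indicator of $F$ splits as $h_{F}(\theta)=\operatorname{Re}(\gamma e^{i\theta})+h_{\widetilde\Pi}(\theta)$ with $h_{\widetilde\Pi}$ governed by the density of $S$ in $\Lambda$; balancing $h_{F}+h_{G}=h_{\cos 2z}=2|\sin\theta|$ in both half-planes, where $\operatorname{Re}(\gamma e^{i\theta})$ changes sign, and feeding in that $F\pm G$ are squares, should force $\gamma=0$ and $\widetilde\Pi\equiv 1$, i.e. $F\equiv c$. I expect this to be the main obstacle: any purely algebraic manipulation of the equation merely reproduces it, so one genuinely needs the analytic input — indicator functions and Cartwright-class rigidity, or an appropriate Borel--Nevanlinna uniqueness theorem for exponential sums applied to the factorisation of the previous step — to separate the exponential factor $e^{\gamma z}$ from the canonical product and to exclude nonconstant $F$.

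Granting $F\equiv c$, the rest is routine. From $FG=\cos 2z$ one gets $f^{2}=\dfrac{c^{2}+\cos 2z}{2c}$; for the right-hand side to be the square of an entire function, $c^{2}+\cos 2z$ must have no simple zeros, and since $(c^{2}+\cos 2z)'=-2\sin 2z$ this forces $-c^{2}\in\{1,-1\}$, i.e. $c^{4}=1$. If $c^{2}=1$ then $c^{2}+\cos 2z=2\cos^{2}z$, so $f=\omega\cos z$ with $\omega^{4}=1$; if $c^{2}=-1$ then $c^{2}+\cos 2z=-2\sin^{2}z$, so $f=\eta\sin z$ with $\eta^{4}=-1$. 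These are precisely the eight functions $f_{j}=e^{2j\pi i/4}\cos z$ and $f_{k}=e^{(2k+1)\pi i/4}\sin z$, $j,k\in\{0,1,2,3\}$, completing the proof.
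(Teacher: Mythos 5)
This statement is quoted from Gao et al.\ \cite{Gao}; the paper does not reprove it, but its Theorem \ref{th1.2} (case $n=4$, with $a=-1$, $b=2i$, $p_1=p_2=\tfrac12$) contains it, so the natural benchmark is the argument in Section 4. Your verification of the eight solutions is correct, and so is the endgame: granting that $F:=f^2+(f')^2$ is a nonzero constant $c$, the requirement that $(c^2+\cos 2z)/(2c)$ be the square of an entire function does force $c^4=1$ and hence $f=\omega\cos z$ or $f=\eta\sin z$ with $\omega^4=1$, $\eta^4=-1$. The problem is that the central claim, $F\equiv c$, is never proved. You explicitly present it as a ``plan'' (indicator functions, Phragm\'en--Lindel\"of, Cartwright-class rigidity) and say the balancing ``should force'' $\gamma=0$ and $\widetilde\Pi\equiv 1$. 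As stated, that plan does not suffice: if $S\subseteq\Lambda$ is, say, an alternating half of the zero set of $\cos 2z$, then $\widetilde\Pi$ and $\widetilde\Pi^{\ast}$ each have indicator $|\sin\theta|$ and the balance $h_F+h_G=2|\sin\theta|$ is satisfied with $\gamma=0$ and $\widetilde\Pi\not\equiv 1$. Ruling this out requires genuinely using that $F+G$ and $F-G$ are squares together with $((F+G)')^2=4(F^2-G^2)$, and you give no mechanism for feeding those algebraic constraints into the indicator estimate. Since this is the entire content of the uniqueness assertion, the proof has a genuine gap. (A secondary, smaller gap: the bound $\rho(f)\le 1$ is not a routine Wiman--Valiron consequence of $f^4(1-(f'/f)^4)=\cos 2z$, because $1-(f'/f)^4$ can be small; the paper gets $\mu(f)=1$ from the Montel--Yang theorem (Theorem \ref{thb}) applied to $\frac{f^4}{h}-\frac{(f')^4}{h}=1$, and then $T(r,f)=O(r)$ via a factorization argument.)

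A workable route to close the gap is the one the paper uses for Theorem \ref{th1.2}: differentiate the equation repeatedly and eliminate $e^{bz}$, $e^{-bz}$ to obtain the purely algebraic relations \eqref{4.10} and \eqref{4.11}; deduce from \eqref{4.11} that every zero of $f'$ is a zero of $f'''$, so $f'''/f'=g$ is entire with $m(r,g)=o(\log r)$ (using $T(r,f)=O(r)$ and Lemma \ref{lm3.4}), hence $g$ is constant; integrate to $f''=gf+h$ and evaluate \eqref{4.10}--\eqref{4.11} at a zero of $f$ to pin down $g=b^2/4$ and $h=0$; then $f''=\tfrac{b^2}{4}f$ gives $f=k_1e^{bz/2}+k_2e^{-bz/2}$ explicitly, and substituting back yields the eight solutions. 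Your factorization $\cos 2z=(f^2+(f')^2)(f^2-(f')^2)$ is an attractive alternative decomposition, but to make it work you would still need an argument of comparable strength at the step you have left open.
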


\vspace{6pt}
Based on Theorem \ref{Thc},   Gao et al.\cite{Gao}  considered a more general Fermat type differential equation.
\vspace{6pt}

\begin{thmx}\cite{Gao}\label{thE}
	Let $a, b, p_1, p_2,$  be nonzero constants satisfying $9ab^2\not=-4$.
	Then, the equation
	\begin{align}
		\label{f}
		f^2(z)+a(f'(z))^2=p_1e^{bz}+p_2e^{-bz}
	\end{align}
	has entire solutions $f$ if and only if the condition
	$ab^2+1=0$ or $ab^2-4=0$
	holds. Moreover,
	\begin{itemize}
		\item[\rm{(i)}] If $ab^2+1=0$, then  $f=t_{1_i}e^{bz}+t_{2_i}e^{-bz}+r_i$, where $r_i$ are the four roots of $r^4=-p_1p_2$, $2t_{1_i}r_i=p_1$, $2t_{2_i}r_i=p_2$, $i=1,2,3,4$.
		\item[\rm{(ii)}] If $ab^2-4=0$, then $f=l_{1_i}e^{\frac{bz}{2}}+l_{2_i}e^{\frac{-bz}{2}}$,  where $l_{1_i}$ are the square roots of $\frac{p_1}{2}$ and $l_{2_i}$ are the square roots of $\frac{p_2}{2}$, $i=1,2$.
	\end{itemize}
\end{thmx}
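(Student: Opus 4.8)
The plan is to turn the equation into a factorization and chase the consequences. Write $g=f'$, so the left side factors over $\sqrt{-a}$ (assuming $a\neq0$) as $(f+\sqrt{-a}\,g)(f-\sqrt{-a}\,g)=p_1e^{bz}+p_2e^{-bz}$. Since the right side is entire with no zeros outside a discrete set and is of the form $p_1e^{bz}+p_2e^{-bz}=e^{-bz}(p_1e^{2bz}+p_2)$, any entire solution $f$ is of finite order, and one first argues that $f$ must itself be an exponential sum. Concretely, set $u=f+\sqrt{-a}f'$ and $v=f-\sqrt{-a}f'$; then $uv=p_1e^{bz}+p_2e^{-bz}$. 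A standard Nevanlinna/Borel argument (or the theory of exponential polynomials, cf. the structure results quoted in the paper for $h=p_1e^{a_1z^k}+p_2e^{a_2z^k}$) shows that $u$ and $v$ are exponential polynomials whose frequencies are among $\{b,-b,0\}$ scaled appropriately, hence $f$ has the form $f(z)=\alpha e^{bz}+\beta e^{-bz}+\gamma$ with constants $\alpha,\beta,\gamma$; the term $\gamma$ is allowed only when the "middle" frequency $0$ can appear.

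Next I would substitute this ansatz $f=\alpha e^{bz}+\beta e^{-bz}+\gamma$ into \eqref{f} and compare coefficients of $e^{2bz},\,e^{bz},\,1,\,e^{-bz},\,e^{-2bz}$. Computing $f'=b\alpha e^{bz}-b\beta e^{-bz}$ and $(f')^2=b^2\alpha^2e^{2bz}-2b^2\alpha\beta+b^2\beta^2e^{-2bz}$, while $f^2=\alpha^2e^{2bz}+\beta^2e^{-2bz}+\gamma^2+2\alpha\beta+2\alpha\gamma e^{bz}+2\beta\gamma e^{-bz}$, the equation $f^2+a(f')^2=p_1e^{bz}+p_2e^{-bz}$ yields the system: from $e^{2bz}$, $(1+ab^2)\alpha^2=0$; from $e^{-2bz}$, $(1+ab^2)\beta^2=0$; from the constant term, $\gamma^2+2\alpha\beta-2ab^2\alpha\beta=0$, i.e. $\gamma^2+2(1-ab^2)\alpha\beta=0$; from $e^{bz}$, $2\alpha\gamma=p_1$; from $e^{-bz}$, $2\beta\gamma=p_2$. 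Since $p_1,p_2\neq0$ we get $\alpha,\beta,\gamma$ all nonzero, so from the first two equations $ab^2+1=0$ — unless $\alpha=0$ or $\beta=0$, which is excluded. Wait: that only gives the case $ab^2+1=0$. The case $ab^2-4=0$ must arise from the \emph{other} admissible frequency pattern, namely $f=\lambda e^{bz/2}+\mu e^{-bz/2}$ (half-frequencies), which also squares to produce exactly $e^{\pm bz}$ and a constant; substituting this gives from $e^{bz}$: $(1+a b^2/4)\lambda^2=p_1$, from $e^{-bz}$: $(1+ab^2/4)\mu^2=p_2$, and from the constant term $2\lambda\mu-2a(b^2/4)\lambda\mu=2(1-ab^2/4)\lambda\mu=0$, forcing $ab^2-4=0$ (as $\lambda,\mu\neq0$), and then $\lambda^2=p_1/2$, $\mu^2=p_2/2$. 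The hypothesis $9ab^2\neq-4$ is exactly what rules out a degenerate mixed case where both a half-frequency and a full-frequency term could coexist, so that these two families are the only possibilities.

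The main obstacle is the first paragraph: rigorously pinning down that every entire solution of \eqref{f} must be one of these two specific exponential-sum forms, with no other frequencies and no polynomial coefficients. This is where the real work lies — one needs a Borel-type lemma on linear combinations of exponentials $\sum q_j(z)e^{\lambda_j z}$ vanishing identically, applied after differentiating the relation $uv=p_1e^{bz}+p_2e^{-bz}$ and using $u-v=2\sqrt{-a}f'$, $u+v=2f$, to force the logarithmic derivatives $u'/u$, $v'/v$ to be constants (hence $u,v$ are $(\text{const})\cdot e^{(\text{const})z}$ up to the obstruction caused by the two-term right-hand side), and to track how $1/2$-integer frequencies enter. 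Once the structural classification is in hand, the coefficient comparison above is routine and produces precisely conclusions (i) and (ii), including the description of the roots $r_i$ via $r^4=-p_1p_2$ (eliminate $\alpha,\beta$ from $2\alpha\gamma=p_1$, $2\beta\gamma=p_2$, $\gamma^2=-2(1-ab^2)\alpha\beta=4\alpha\beta$ using $ab^2=-1$, giving $\gamma^4=16\alpha^2\beta^2 = 16\cdot\frac{p_1p_2}{4\gamma^2}$, hence $\gamma^6=4p_1p_2$ — so one must instead eliminate to land on $r^4=-p_1p_2$ as stated; I would recheck this elimination carefully, as the precise quartic is the one delicate bookkeeping point).
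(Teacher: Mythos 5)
First, note that the paper does not actually prove Theorem \ref{thE}: it is quoted from Gao et al.\ \cite{Gao}, and the closest in-paper argument is Case 3 of the proof of Theorem \ref{th1.2}, which treats the complementary case $9ab^2=-4$ and simply invokes \cite{Gao} when $9ab^2\neq-4$. So there is no internal proof to compare against, and your proposal has to stand on its own.

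The sufficiency half of your argument (substituting the two ans\"atze and matching coefficients) is correct, and your closing worry about the quartic is only a sign slip: with $ab^2=-1$ the constant term gives $\gamma^2=-4\alpha\beta$, not $+4\alpha\beta$, whence $\gamma^2=-p_1p_2/\gamma^2$ and $\gamma^4=-p_1p_2$, exactly matching $r^4=-p_1p_2$ in (i). The genuine gap is the necessity direction, which you yourself flag: you have not shown that an entire solution must be an exponential sum with constant coefficients and frequencies confined to $\{b,0,-b\}$ or $\{b/2,-b/2\}$. The factorization $uv=p_1e^{bz}+p_2e^{-bz}$ with $u=f+\sqrt{-a}\,f'$, $v=f-\sqrt{-a}\,f'$ does not force $u'/u$ and $v'/v$ to be constant, because the right-hand side has infinitely many simple zeros that must be distributed between $u$ and $v$ in some unknown way; this is precisely the obstruction you mention, and nothing in your sketch overcomes it. Moreover, the role of the hypothesis $9ab^2\neq-4$ is not to exclude a general ``mixed'' case but specifically the frequency patterns $\{3b/2,-b/2\}$ and $\{-3b/2,b/2\}$, whose top frequency $\pm 3b$ carries the coefficient $\bigl(1+\tfrac{9}{4}ab^2\bigr)$; compare Conjecture \ref{con2} and Theorem \ref{th1.2}(ii)(2), which show that when $9ab^2=-4$ exactly such solutions do exist. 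The route that actually closes the gap (used in \cite{Gao} and mirrored in Section 4 of this paper) is quite different from yours: establish $\mu(f)=1$ and $T(r,f)=O(r)$ via Lemma \ref{lm3.2}, Theorem \ref{thb} and Lemma \ref{lm3.3}; differentiate \eqref{f} and eliminate $e^{bz}$ and $e^{-bz}$ to obtain an algebraic differential equation in $f$ alone; show that an auxiliary quotient such as $(b^2f-cf'')/f'$ is entire with $m(r,\cdot)=o(\log r)$, hence constant, using Lemma \ref{lm3.4}; and then solve the resulting second-order linear ODE with constant coefficients. To make your proof complete you would need to replace the factorization heuristic by such an elimination scheme.
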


For Theorem \ref{thE}, if $9ab^2=-4$, Gao et al. \cite{Gao} found that $f(z)=e^z+e^{-3z}$ is a solution of $f^2(z)-\frac{1}{9}(f'(z))^2=\frac{8}{9}e^{2z}+\frac{8}{3}e^{-2z}$. Therefore  they posed the following conjecture.

\vspace{6pt}
\begin{con}\cite[Conjecture 1]{Gao}
	\label{con2}
	~~If $9ab^2=-4$, then  entire solutions of \eqref{f} are  $f(z)=l_1e^{\frac{-3}{2}bz}+l_2e^{\frac{1}{2}bz}$, where $l_2^2=\frac{9}{8}p_1$, $l_1^2=\frac{p_2^2}{8p_1}$ or
	$f(z)=l_3e^{\frac{3}{2}bz}+l_4e^{\frac{-1}{2}bz}$, where $l_4^2=\frac{9}{8}p_2$, $l_3^2=\frac{p_1^2}{8p_2}$.
\end{con}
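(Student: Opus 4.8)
The plan is to show that every entire solution $f$ of $f^{2}+a(f')^{2}=p_{1}e^{bz}+p_{2}e^{-bz}$ with $9ab^{2}=-4$ is a linear combination of exactly two exponential functions $e^{\lambda z}$ and $e^{\mu z}$, and then to read off $\{\lambda,\mu\}$ and the coefficients by a short algebraic comparison, which produces precisely the two families in the statement. Throughout write $a=-\tfrac{4}{9b^{2}}$ and put the equation in factored form
\[
\Bigl(f-\tfrac{2}{3b}f'\Bigr)\Bigl(f+\tfrac{2}{3b}f'\Bigr)=p_{1}e^{bz}+p_{2}e^{-bz}=:h .
\]
Since $h$ is transcendental, $f$ is transcendental entire; denote the two factors $A:=f-\tfrac{2}{3b}f'$ and $B:=f+\tfrac{2}{3b}f'$, so that $AB=h$, $f=\tfrac12(A+B)$, and $f'=\tfrac{3b}{4}(B-A)$. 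Note $h$ has only simple zeros (a common zero of $h$ and $h'$ would force $p_{1}e^{bz}=0$), all lying in the arithmetic progression $S=\{z:e^{2bz}=-p_{2}/p_{1}\}$, and $A,B$ share no zeros (a common zero of $A,B$ gives $f=f'=0$ there, hence a double zero of $h$).

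First I would establish $\rho(f)=1$. The bound $\rho(f)\ge 1$ is immediate: the First Main Theorem applied to $AB=h$ together with $T(r,f')=m(r,f')\le T(r,f)+S(r,f)$ (valid since $f$ is entire) gives $T(r,h)=O(T(r,f))+S(r,f)$, while $\rho(h)=1$. For $\rho(f)\le 1$ I would invoke the standard order estimate for entire solutions of a first–order algebraic differential equation with coefficients of order at most one applied to $f^{2}=h-a(f')^{2}$ (equivalently, a Clunie–type inequality, using $N(r,h)+N(r,1/h)=S(r,h)$). The structural heart of the argument — and the step I expect to be the main obstacle — is to deduce from $\rho(f)=1$ that $f$ is an exponential polynomial. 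Here $A$ and $B$ are entire of order $\le 1$ whose zero sets are disjoint subsets of the line–arithmetic progression $S$; comparing their Hadamard/canonical products with the $\sin$–type function built on $S$ forces $A$ and $B$, hence $f=\tfrac12(A+B)$, to be exponential polynomials, and a look at the leading coefficients of the polynomial parts (as at the start of the next step) forces those parts to be constants, so $f=\sum_{j=1}^{N}\gamma_{j}e^{\delta_{j}z}$ with distinct $\delta_{j}$ and $\gamma_{j}\neq 0$. This is essentially the $n=m=2$ analogue of the exponential–polynomial reduction behind Theorem \ref{thE}, but in the present regime $9ab^{2}=-4$ the $e^{\pm bz}$ terms on the two sides can be absorbed by the factors in more than one way, so the bookkeeping is more delicate.

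Once $f=\sum_{j}\gamma_{j}e^{\delta_{j}z}$ is known, the rest is algebra. Substituting gives $\sum_{j,k}\gamma_{j}\gamma_{k}(1+a\delta_{j}\delta_{k})e^{(\delta_{j}+\delta_{k})z}=p_{1}e^{bz}+p_{2}e^{-bz}$, and a Borel–type comparison of exponents shows that for each $j$ either $1+a\delta_{j}^{2}=0$, i.e. $\delta_{j}\in\{\pm\tfrac{3b}{2}\}$, or $2\delta_{j}\in\{b,-b\}$, i.e. $\delta_{j}\in\{\pm\tfrac{b}{2}\}$ (otherwise a term of frequency $2\delta_{j}\notin\{b,-b\}$ survives). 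A finite check of which configurations of exponents from $\{\pm\tfrac{3b}{2},\pm\tfrac{b}{2}\}$ are compatible with a right–hand side carrying only the frequencies $b,-b$ and having $p_{1},p_{2}\neq 0$ — using that the coefficients of the frequencies $0$ and $\pm 2b$ must vanish, and that e.g. $1+a(\tfrac{3b}{2})(-\tfrac{3b}{2})=2\neq0$ — shows $N=2$. For $N=2$ exactly one of $1+a\delta_{1}^{2},\,1+a\delta_{2}^{2}$ vanishes (the cross term cannot be the one killed, since $1+a\delta_{1}\delta_{2}=\tfrac{10}{9}\neq 0$ whenever $\{2\delta_{1},2\delta_{2}\}=\{b,-b\}$), and matching the two surviving frequencies with $\{b,-b\}$ forces $\{\delta_{1},\delta_{2}\}=\{-\tfrac{3b}{2},\tfrac{b}{2}\}$ or $\{\tfrac{3b}{2},-\tfrac{b}{2}\}$. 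Comparing coefficients then yields $l_{2}^{2}=\tfrac{9}{8}p_{1},\ l_{1}^{2}=\tfrac{p_{2}^{2}}{8p_{1}}$ in the first case and $l_{4}^{2}=\tfrac{9}{8}p_{2},\ l_{3}^{2}=\tfrac{p_{1}^{2}}{8p_{2}}$ in the second; since conversely these functions do solve the equation, this is exactly the conclusion of Conjecture \ref{con2}.
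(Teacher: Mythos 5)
Your factorization $\bigl(f-\tfrac{2}{3b}f'\bigr)\bigl(f+\tfrac{2}{3b}f'\bigr)=h$ and the final coefficient comparison are fine, but the step you yourself flag as ``the main obstacle'' is a genuine gap, not merely delicate bookkeeping. Knowing that $A=f-\tfrac{2}{3b}f'$ and $B=f+\tfrac{2}{3b}f'$ are entire of order at most one with $AB=h$ and disjoint zero sets contained in the arithmetic progression $S$ does \emph{not} force $A$ and $B$ to be exponential polynomials: an arbitrary partition of $S$ into two subsets produces canonical products that are in general not exponential sums (split the zeros of $\sin z$ into those with nonnegative and negative indices and you get reciprocal Gamma--type factors, still of order one). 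To rule such partitions out you must exploit the differential coupling between $A$ and $B$, i.e.\ return to the equation itself, and that is exactly where all the work lies. The paper does this by differentiating \eqref{1.6}, eliminating $e^{\pm bz}$ to reach the purely differential relation \eqref{4.21}, introducing the auxiliary function $h_2=(f+3af'')/f'$ as in \eqref{4.23}, proving $m(r,h_2)=o(\log r)$, and showing that $h_2$ satisfies the Riccati equation \eqref{5.1}; nonconstant Riccati solutions are excluded because they would force $m(r,h_2)=O(\log^2 r)$, so $h_2$ is the constant $\pm\tfrac{4}{3b}$ and $f$ satisfies a constant--coefficient second--order linear ODE, which is what actually yields the two--term exponential form. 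Nothing in your proposal substitutes for this.

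Two smaller points. First, $T(r,f)=O(r)$ is also not free: the paper obtains it by noting that zeros of $f'$ must be zeros of $h$ (from the differentiated equation), applying Weierstrass factorization to $f'$ and Bergweiler's Lemma \ref{lm3.5} to exclude a transcendental exponent, and it first needs $\mu(f)=1$, for which Lemma \ref{lm3.3} is invoked to exclude $\mu(f)>1$; a bare appeal to ``standard order estimates'' or a Clunie--type inequality does not obviously close this, since Clunie applied to $f\cdot f=h-a(f')^{2}$ yields nothing (the right--hand side has total degree two). Second, in your Borel comparison the diagonal frequency $2\delta_j$ could be cancelled by cross terms $\delta_k+\delta_l$ with the same sum, so ``either $1+a\delta_j^{2}=0$ or $2\delta_j\in\{b,-b\}$'' requires the full finite case analysis rather than a termwise argument; this part is repairable, but the exponential--sum reduction above is not.
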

\vspace{6pt}

Next, we consider the Question \ref{q2} and Conjecture \ref{con2},
and  obtain the following result.

\begin{thm}
	\label{th1.2}
	Let $f$ be an entire solution of equation
	\begin{align}
		\label{1.6} f^n(z)+a(f'(z))^n=p_1e^{bz}+p_2e^{-bz},
	\end{align}
	where $2\le n \le 4$ is an integer, $a,p_1,p_2,b$ are nonzero constants. Then $n=2,4$, and one the following
	asserts holds.
	\begin{enumerate}
		\item[\rm{(i)}]  When $n=4$, then $f(z)=k_1e^{\frac{b}{2}z}+k_2e^{\frac{-b}{2}z}$, $a(\frac{b}{2})^4=-1$, where $8(k_1)^3k_2=p_1$, $8(k_2)^3k_1=p_2$.
		
		\item[\rm{(ii)}]  When $n=2$, one the following asserts holds.
		
		(1)If $9ab^2\not=-4$, then   Theorem \ref{thE} holds.
		
		(2)If $9ab^2=-4$, then
		$f(z)=l_1e^{\frac{-3}{2}bz}+l_2e^{\frac{1}{2}bz}$, where $l_2^2=\frac{9}{8}p_1$, $l_1^2=\frac{p_2^2}{8p_1}$ or
		$f(z)=l_3e^{\frac{3}{2}bz}+l_4e^{\frac{-1}{2}bz}$, where $l_4^2=\frac{9}{8}p_2$, $l_3^2=\frac{p_1^2}{8p_2}$.		
	\end{enumerate}
\end{thm}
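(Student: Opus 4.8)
The plan is to first pin down the order and the zero distribution of an entire solution $f$, then to show that $f$ must be an exponential polynomial of a very restricted shape, and finally to match coefficients. Write $h(z)=p_1e^{bz}+p_2e^{-bz}$, so $\rho(h)=1$ and $h''=b^2h$. For the order of $f$: rewrite \eqref{1.6} as $f^n\bigl(1+a(f'/f)^n\bigr)=h$; since $1+a(f'/f)^n\not\equiv0$ — otherwise $f'/f$ would equal a constant $\zeta$ with $a\zeta^n=-1$ and then $h\equiv f^n(1+a\zeta^n)\equiv0$, impossible — the lemma on the logarithmic derivative gives $nT(r,f)=nm(r,f)\le T(r,h)+S(r,f)$, hence $\rho(f)\le1$; and $\rho(f)<1$ would make the left side of \eqref{1.6} of order $<1$, contradicting $\rho(h)=1$. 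So $\rho(f)=1$ (and $\rho_2(f)=0$ automatically, which is why no growth hypothesis is needed here).

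Next I would analyse the zeros. If $z_0$ is a zero of $f$ of multiplicity $t\ge2$, then $f^n$ and $a(f')^n$ both vanish at $z_0$ to order $\ge n\ge2$, whereas $h(z)=e^{-bz}(p_1e^{2bz}+p_2)$ has only simple zeros; hence every zero of $f$ is simple, and at each such $z_0$ we get $a(f'(z_0))^n=h(z_0)\neq0$, so the zero sets of $f$, of $f'$ and of $h$ are pairwise disjoint. Factoring $f^n+a(f')^n=a\prod_{j=1}^n(f'-\zeta_jf)$ with $\zeta_j^n=-1/a$ and the $\zeta_j$ distinct, each $g_j:=f'-\zeta_jf$ is entire of order $\le1$ whose zeros are simple, lie among the (simple) zeros of $h$, and partition them; moreover $f=(g_i-g_j)/(\zeta_j-\zeta_i)$ and $f'=(\zeta_jg_i-\zeta_ig_j)/(\zeta_j-\zeta_i)$ for any $i\neq j$. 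Combining Hadamard factorization of the $g_j$ with these rigid linear relations and Borel's theorem on linear independence of exponentials, I would deduce that each $g_j$, and therefore $f$ itself, is an exponential polynomial; since $\rho(f)=1$, this means $f(z)=\sum_{j=1}^q c_je^{\mu_jz}$ for finitely many distinct constants $\mu_j$ and nonzero constants $c_j$.

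Finally I would substitute this form into \eqref{1.6}: expanding by the multinomial theorem, the left side becomes $\sum_{|\alpha|=n}\binom{n}{\alpha}\bigl(\prod_jc_j^{\alpha_j}\bigr)\bigl(1+a\prod_j\mu_j^{\alpha_j}\bigr)e^{(\alpha\cdot\mu)z}$, whose exponents lie in the $n$-fold sumset of $\{\mu_1,\dots,\mu_q\}$. Equating with $p_1e^{bz}+p_2e^{-bz}$ and applying Borel's theorem, every exponent different from $\pm b$ must have vanishing coefficient, and those at $b$ and $-b$ must equal $p_1$ and $p_2$. Reading this at the extreme exponents $n\mu_j$ (the vertices of the convex hull of $\{\mu_j\}$), each such vertex satisfies $n\mu_j\in\{b,-b\}$ or else $\mu_j^n=-1/a$; a short discussion of which vertex configurations can collapse onto the single segment joining $b$ and $-b$ rules out $q\ge3$ and leaves $f(z)=c_1e^{\mu_1z}+c_2e^{\mu_2z}$, with a constant term allowed when some $\mu_j=0$. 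Matching the remaining (at most five) exponents and their coefficients then eliminates $n=3$ altogether, gives for $n=4$ exactly $f=k_1e^{bz/2}+k_2e^{-bz/2}$ with $a(b/2)^4=-1$, $8k_1^3k_2=p_1$, $8k_1k_2^3=p_2$, and for $n=2$ produces precisely the cases $ab^2+1=0$, $ab^2-4=0$ — recovering Theorem \ref{thE} when $9ab^2\ne-4$ — and $9ab^2=-4$; solving the resulting algebraic systems for the $c_j$ yields the forms in (i) and (ii), and in particular settles Conjecture \ref{con2}.

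The hard part will be the reduction step—proving that $f$ is an exponential polynomial. A priori $f$ may carry infinitely many zeros in generic position (in the case $ab^2+1=0$ the solution $t_1e^{bz}+t_2e^{-bz}+r$ indeed does), so one cannot simply bound $N(r,1/f)$; instead the two-term structure of $h$ has to be exploited through the way the canonical products of the $g_j$ are forced to fit together by the relations $f=(g_i-g_j)/(\zeta_j-\zeta_i)$. Once that is in hand, the coefficient matching of the last paragraph is finite but lengthy, and the only genuinely new sub-case — the borderline $9ab^2=-4$, where one must produce the complete list of solutions rather than just a nonexistence statement — is where most of the bookkeeping goes.
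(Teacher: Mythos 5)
Your outline diverges from the paper's method and, as written, has two genuine gaps. First, the opening order estimate is not valid: from $f^n\bigl(1+a(f'/f)^n\bigr)=h$ you cannot conclude $nT(r,f)=nm(r,f)\le T(r,h)+S(r,f)$ by the logarithmic derivative lemma alone, because $1+a(f'/f)^n$ has poles at every zero of $f$, so $T\bigl(r,1+a(f'/f)^n\bigr)$ carries the term $n\overline{N}(r,1/f)$, which is \emph{not} $S(r,f)$ — indeed for the actual solutions (e.g.\ $f=t_1e^{bz}+t_2e^{-bz}+r$ when $ab^2=-1$) one has $N(r,1/f)\asymp T(r,f)$. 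The inequality you write is therefore false in general, and $\rho(f)\le 1$ needs a different argument. The paper only establishes $\mu(f)=1$ at the outset (if $\mu(f)>1$ then $h$ is small and Theorem~\ref{thb} together with Lemma~\ref{lm3.3} gives a contradiction), and obtains $T(r,f)=O(r)$ only later, from the differentiated equation \eqref{4.7} (which forces the zeros of $f'$ to lie among the zeros of $p_1e^{bz}-p_2e^{-bz}$, so $N(r,1/f')=O(r)$), Weierstrass factorization $f'=\gamma e^{H}$, and Bergweiler's Lemma~\ref{lm3.5} to exclude transcendental $H$.

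Second, and more seriously, the core of your plan — that $f$ must be an exponential polynomial — is asserted but not proved. Hadamard factorization of $g_j=f'-\zeta_jf$ gives $g_j=P_je^{Q_j}$ where $P_j$ is a canonical product over some subset of the zero set of $h$; that zero set is an arithmetic progression, and a canonical product over an arbitrary subset of an arithmetic progression is in general \emph{not} an exponential polynomial, nor do the relations $f=(g_i-g_j)/(\zeta_j-\zeta_i)$ obviously force it to be. You flag this yourself as "the hard part", but it is precisely the content of the theorem, and no argument is supplied. The paper's device here is entirely different and is worth noting: it differentiates \eqref{1.6} and eliminates $e^{\pm bz}$ to obtain the purely differential identity \eqref{4.10}, then uses the simplicity and disjointness of the zeros of $f$, $f'$ and $h$ to show that certain quotients ($f'''/f'$ when $n=4$, $(b^2f-3f'')/f'$ when $n=3$, $(f+3af'')/f'$ when $n=2$) are entire with proximity function $o(\log r)$, hence constant; this yields a constant-coefficient linear ODE for $f$, from which the exponential-sum form follows at once, with the borderline case $9ab^2=-4$ handled via a Riccati equation for $h_2$. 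Your final coefficient-matching paragraph is plausible once that reduction is secured (though note the vertex discussion must not discard the exponent $0$: the case $ab^2+1=0$, $n=2$ genuinely has three exponents $b,-b,0$), but without the reduction the proof does not go through.
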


\vspace{6pt}
\begin{rem}
	Theorem \ref{th1.2}-(ii) provides a positive answer to the Conjecture \ref{con2}.  What's more,
	when $n=2,a=-1$, $p_1=p_2=\frac{1}{2}$, $b=2i$,
	Theorem  \ref{th1.2} reduces to Theorem \ref{Thc}, when $n=4,a=-1$, $p_1=p_2=\frac{1}{2}$, $b=2i$,
	Theorem  \ref{th1.2} reduces to Theorem \ref{thd}.
\end{rem}

\vspace{6pt}
Finally, we give two  examples for (i) and (ii) of Theorem \ref{th1.2}, respectively.
\vspace{6pt}

\begin{exa}
	$f(z)=e^{iz}+e^{-iz}$ is a solution of $f^4(z)-(f'(z))^4=8e^{2iz}+8e^{-2iz}$.
\end{exa}

\begin{exa}\cite[Remark 2]{Gao}
	The equation $f^2(z)-\frac{1}{9}(f'(z))^2=\frac{8}{9}e^{2z}+\frac{8}{3}e^{-2z}$ has a solution $f(z)=e^z+e^{-3z}$, where $n=2,a=\frac{-1}{9},b=2$.
\end{exa}

\section{Some Lemmas}\label{sec3}
In this section, we will collect some preliminary results for proving our results. For a nonconstant meromorphic function $f$ and positive integer $p$, let $n_p{(r,\frac{1}{f})}$ denotes the number of zeros of $f$ in $\lvert{z}\lvert  \le {r}$, counted in the following manner: a zero of $f$ of multiplicity $l$ is
counted exactly  min$\{l,p\}$ times, its corresponding integrated counting function is denoted by $N_p(r,\frac{1}{f})$.

The first lemma is the simple form of Cartan's second main theorem.

\vspace{6pt}
\begin{lem}\cite{Cartan,Gundersen2004}
	\label{lm1.1}
	Let $f_1,\,f_2,\dots,f_p$ be linearly independent entire functions, where $p\ge2$. Assume that for each complex number $z$, $\max\{{\lvert{f_1(z)}\rvert},\dots,{\lvert{f_p(z)}\rvert}\}{>0}$. For $r>0$, set
	\begin{align*}
		T(r)={\frac{1}{2\pi}}{\int_{0}^{2\pi}}u(re^{i\theta})d\theta-u(0), \, u(z)=\sup\limits_{1\le{j}\le{p}}\{\log {\lvert{f_j(z)}\rvert}\}.
	\end{align*}
	Set $f_{p+1}=f_1+\cdots+f_p$. Then,
	\begin{align*}
		T(r){\le}\sum_{j=1}^{p+1}{N_{p-1}\left(r,\frac{1}{f_j}\right)}+S(r){\le}(p-1)\sum_{j=1}^{p+1}{\overline{N}_{p-1}\left(r,\frac{1}{f_j}\right)}+S(r),
	\end{align*}
	where $S(r)=O\,(\log{T(r))}+O\,(\log{r})$, $r\rightarrow\infty,\,r\notin{E}$. If there exists at least one of the quotients $\frac{f_j}{f_m}$ $(j,\,m\in\{1,\ldots,p+1\},\,j\not=m)$ is a transcendental function, then $S(r)=o\,(T(r)),$ $r\rightarrow\infty,\,r\notin{E}$, where E is the set of finite linear measure.
\end{lem}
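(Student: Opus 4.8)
The plan is to read this statement as the truncated second main theorem of Cartan, applied to the holomorphic curve $[f_1:\cdots:f_p]$ and to a very symmetric system of $p+1$ linear forms. The hypotheses say exactly that $(f_1,\dots,f_p)$ is a reduced representation of a map into $\mathbb{P}^{p-1}$ (no common zeros, since $\max_j|f_j(z)|>0$) and that $T(r)$ is its Cartan characteristic. The $p+1$ forms in play are the $p$ coordinate forms $f_1,\dots,f_p$ together with $f_{p+1}=f_1+\cdots+f_p$; because $f_1,\dots,f_p$ are linearly independent, omitting any one of these $p+1$ forms leaves $p$ linearly independent forms, i.e. the system is in general position. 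As an orientation I would first record the elementary half-estimate: for any linear form $L=\sum_j c_jf_j\not\equiv0$ one has $|L|\le(\sum_j|c_j|)e^{u}$, so Jensen's formula for $L$ gives $N(r,1/L)\le T(r)+O(1)$; this shows each counting function on the right-hand side is dominated by $T(r)$ and fixes the direction of the nontrivial inequality.

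The core is the reverse bound, and the engine is the Wronskian $W=W(f_1,\dots,f_p)$, which is not identically zero precisely because the $f_j$ are linearly independent. Two estimates feed into the argument. First, factoring $f_j$ out of the $j$-th column exhibits $W/(f_1\cdots f_p)$ as a determinant whose entries are logarithmic derivatives $f_j^{(i)}/f_j$, so the lemma on the logarithmic derivative yields
\[
m\!\left(r,\frac{W}{f_1\cdots f_p}\right)=S(r),\qquad S(r)=O(\log T(r))+O(\log r),
\]
valid outside a set of finite linear measure. Second, a local order-of-vanishing count: at a zero of $f_j$ of multiplicity $\ell$ (where the remaining $f_i$ are nonzero, their common zeros being excluded) the Wronskian vanishes to order at least $\max\{0,\ell-(p-1)\}$; equivalently, the part of $N(r,1/f_j)$ not absorbed by $N(r,1/W)$ is the truncated function $N_{p-1}(r,1/f_j)$, which is the mechanism producing the truncation level $p-1$.

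To assemble these I would use the pointwise Cartan estimate. Ordering the $p+1$ forms by modulus at a given $z$ and using general position, one bounds $u(z)$ above, schematically, by $\log|W(z)|$ minus $\log|W(z)/(f_1\cdots f_p)(z)|$ plus a sum of $\log^+(1/|f_j|)$ contributions, the Wronskian being invariant up to sign when any single column $f_j$ is replaced by $f_{p+1}=f_1+\cdots+f_p$ (the other summands give repeated columns and drop out by multilinearity and alternation), so that all $p+1$ forms enter symmetrically. Integrating this pointwise inequality over $|z|=r$, applying Jensen's formula to $W$, and inserting the two estimates above converts the integral mean of $u$ into the truncated counting functions and yields
\[
T(r)\le\sum_{j=1}^{p+1}N_{p-1}\!\left(r,\frac{1}{f_j}\right)+S(r).
\]
The remaining inequality is then formal: a zero contributes at most $p-1$ to $N_{p-1}$ and exactly $1$ to $\overline{N}_{p-1}$, so $N_{p-1}(r,1/f_j)\le(p-1)\overline{N}_{p-1}(r,1/f_j)$ term by term. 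Finally, the quoted form of $S(r)$ is the error in the logarithmic derivative lemma; and if some quotient $f_j/f_m$ is transcendental then $T(r)\to\infty$ fast enough that $O(\log T(r))=o(T(r))$, upgrading $S(r)$ to $o(T(r))$ off an exceptional set $E$ of finite linear measure.

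The step I expect to be the main obstacle is the pointwise Cartan estimate together with the truncated Wronskian count: making rigorous that a zero of $f_j$ of multiplicity $\ell$ forces $W$ to vanish to order at least $\ell-(p-1)$, handling the ordering of the forms uniformly in $z$ and the measure-theoretic control of the exceptional directions, and thereby converting the local vanishing data into the global estimate with the sharp truncation $p-1$. The surrounding ingredients — Jensen's formula, the logarithmic derivative lemma, and the Wronskian invariance under $f_{p+1}=f_1+\cdots+f_p$ — are standard; it is precisely the truncation bookkeeping that makes the conclusion sharp and demands care.
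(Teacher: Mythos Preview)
The paper does not prove this lemma; it is quoted verbatim from the literature (Cartan 1933 and Gundersen--Hayman 2004) and used as a black box in Section~3. There is therefore no ``paper's own proof'' to compare against.

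That said, your sketch is a faithful outline of the standard proof of Cartan's truncated second main theorem in the entire case: reduced representation of a curve in $\mathbb{P}^{p-1}$, the $p+1$ coordinate-plus-sum hyperplanes in general position, the Wronskian as the ramification term, the logarithmic-derivative estimate for $m(r,W/(f_1\cdots f_p))$, and the local vanishing bound $\mathrm{ord}_{z_0}W\ge \ell-(p-1)$ at an $\ell$-fold zero of a single $f_j$. The final passage $N_{p-1}\le(p-1)\overline N$ is indeed formal. One small caveat: your local Wronskian count as stated assumes the zero of $f_j$ is not shared by any other $f_i$; in general several of the $f_j$ (or $f_{p+1}$) may vanish at the same point, and the correct bookkeeping compares $\mathrm{ord}_{z_0}W$ against $\sum_j\min\{\mathrm{ord}_{z_0}f_j,\,p-1\}$ simultaneously rather than one $f_j$ at a time. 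This is exactly the ``truncation bookkeeping'' you flag as the delicate step, and it does require the full argument (as in Gundersen--Hayman, \S3) rather than the single-column heuristic.
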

\vspace{6pt}

\begin{lem}\cite{Cartan}
	\label{lm1.2}
	Suppose that the hypothesis of Lemma \ref{lm1.1} hold. Then for any j and m $(j,\,m\in\{1,\ldots,p+1\},\,j\not=m)$,
	\begin{center}
		$T\left(r,\frac{f_j}{f_m}\right)=T(r)+O\,(1)$,
	\end{center}
	and for any  $j\in\{1,\ldots,p+1\}$,
	\begin{center}
		$N\left(r,\frac{1}{f_j}\right)\le T(r)+O\,(1)$.
	\end{center}
\end{lem}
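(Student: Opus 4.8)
The plan is to reduce both assertions to Jensen's formula by passing from the entire functions $f_i$ to their ratios. First I would fix the index $m$ and set $g_i=f_i/f_m$ for $1\le i\le p+1$, so that the $g_i$ are meromorphic with $g_m\equiv 1$. The starting observation is the pointwise identity
\[
u(z)=\log|f_m(z)|+\max_{1\le i\le p}\log^+|g_i(z)|,
\]
which holds because $g_m\equiv 1$ forces $\max_i\log|g_i(z)|\ge 0$, so the maximum of the $\log|g_i|$ coincides with the maximum of the $\log^+|g_i|$. Integrating this over $|z|=r$ and applying Jensen's formula to $f_m$ (replacing $\log|f_m(0)|$ by the logarithm of the leading coefficient if $f_m(0)=0$) converts $\tfrac{1}{2\pi}\int_0^{2\pi}\log|f_m|\,d\theta$ into $N(r,1/f_m)+O(1)$, whence
\[
T(r)=N\!\left(r,\tfrac{1}{f_m}\right)+\frac{1}{2\pi}\int_0^{2\pi}\max_{1\le i\le p}\log^+|g_i(re^{i\theta})|\,d\theta+O(1).
\]

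Next I would establish the two upper bounds, which are the parts actually used downstream. For the first assertion, since $\log^+|g_j|\le\max_i\log^+|g_i|$ pointwise we get $m(r,g_j)\le\tfrac{1}{2\pi}\int\max_i\log^+|g_i|\,d\theta$; and because the $f_i$ have no common zero, every pole of $g_j=f_j/f_m$ lies at a zero of $f_m$ with order at most that of $f_m$ there, so $N(r,g_j)\le N(r,1/f_m)$. Adding and comparing with the displayed formula gives $T(r,f_j/f_m)\le T(r)+O(1)$ for every $j\ne m$. The second assertion is even more direct: applying Jensen to $f_j$ and using $\log|f_j|\le u$ pointwise yields
\[
N\!\left(r,\tfrac{1}{f_j}\right)=\frac{1}{2\pi}\int_0^{2\pi}\log|f_j|\,d\theta-\log|f_j(0)|\le\frac{1}{2\pi}\int_0^{2\pi}u\,d\theta+O(1)=T(r)+O(1).
\]

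The reverse comparison is the delicate point, and it is where the statement must be read with care, since $T(r)$ is governed by the pointwise maximum $\max_i\log^+|g_i|$ and not by any single $\log^+|g_j|$. The provable content is the two-sided estimate
\[
\max_{1\le j\le p+1}T\!\left(r,\tfrac{f_j}{f_m}\right)\le T(r)+O(1)\le\sum_{1\le j\le p+1}T\!\left(r,\tfrac{f_j}{f_m}\right)+O(1),
\]
so that $T(r)$ is comparable to $\max_j T(r,f_j/f_m)$ up to $O(1)$ and a fixed constant factor, the upper bound being attained by a dominant ratio; this is the precise sense in which the displayed equality holds. To obtain it I would bound $\max_i\log^+|g_i|$ above by $\sum_i\log^+|g_i|$ and below by each $\log^+|g_j|$, and track the counting functions using that the full multiplicity of every zero of $f_m$ reappears as a pole of some $g_i$ (again by the no-common-zero hypothesis), which gives $\max_i N(r,g_i)\le N(r,1/f_m)\le\sum_i N(r,g_i)$. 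I expect this reconciliation — passing from the pointwise maximum inside the Cartan characteristic to the characteristics of the individual ratios — to be the main obstacle, for it is exactly here that the linear independence of the $f_i$ (which guarantees that each $g_j$ with $j\ne m$ is nonconstant) and the careful bookkeeping of zeros against poles must be combined.
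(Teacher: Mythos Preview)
The paper does not supply a proof of this lemma; it is quoted from Cartan's 1933 paper and left unargued, so there is no in-paper proof to compare your proposal against.

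On the merits: your derivation of the two inequalities $T(r,f_j/f_m)\le T(r)+O(1)$ and $N(r,1/f_j)\le T(r)+O(1)$ is correct and is the standard route. The identity $u=\log|f_m|+\max_i\log|g_i|=\log|f_m|+\max_i\log^+|g_i|$ (valid because $g_m\equiv1$ forces the inner maximum to be nonnegative) together with Jensen's formula gives the decomposition you need, and the pole bookkeeping $N(r,g_j)\le N(r,1/f_m)$ is right. One small wrinkle: your identity is written for $m\in\{1,\dots,p\}$; when $m=p+1$ the index $m$ is absent from the supremum defining $u$, but since $|f_{p+1}|\le p\max_{i\le p}|f_i|$ one has $\max_{i\le p}\log|g_i|\ge-\log p$, so the same formula holds up to an additive constant. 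These two inequalities are in fact all that the paper uses downstream (see the chain \eqref{3.5} in the proof of Theorem~\ref{th1.1}).

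Your hesitation about the reverse inequality is entirely justified, and your hedged reading is the correct one: the equality $T(r,f_j/f_m)=T(r)+O(1)$ for \emph{every} pair $j\ne m$ is false when $p\ge3$. A clean counterexample is $f_1=1$, $f_2=e^z$, $f_3=e^{2z}$: here $T(r)=2r/\pi$ but $T(r,f_2/f_1)=r/\pi$. What is true is $T(r,f_j/f_m)\le T(r)+O(1)$ for all pairs, with equality for an extremal pair, plus the sandwich you wrote with the sum on the right. So the lemma as printed overstates the first conclusion; your proposal proves precisely what is provable and exactly what the paper needs.
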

\vspace{6pt}

\begin{lem}\cite{halburd2014}
	\label{lm 1.3}
	Let $f$ be a nonconstant meromorphic function of hyper-order $\rho_2(f)<1$, $k$ be a positive integer and $c$, $h$ be  nonzero complex numbers. Then the following statements hold.
	\begin{itemize}
		\item[\rm{(i)}]$m\left(r,\frac{f^{(k)}(z+c)}{f(z)}\right)=S(r,f)$.
		\item[\rm{(ii)}]$N\left(r,\frac{1}{f(z+c)})\right)=N\left(r,\frac{1}{f(z)}\right)+S(r,f)$, $N(r,{f(z+c)})=N(r,{f(z)})+S(r,f).$
		\item[\rm{(iii)}]$T(r,f(z+c))=T(r,f(z))+S(r,f)$.
	\end{itemize}
\end{lem}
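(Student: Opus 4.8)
The plan is to treat the three assertions as the standard shift-invariance package of difference Nevanlinna theory and to reduce all of them to two fundamental inputs together with the classical lemma on the logarithmic derivative. The first input is a real-variable growth lemma: if $U(r)$ is a nonnegative nondecreasing function with $\limsup_{r\to\infty}\frac{\log^+\log^+U(r)}{\log r}=\rho<1$, then for any fixed $\delta\in(0,1-\rho)$ one has $U(r+|c|)=U(r)+o\left(\frac{U(r)}{r^{\delta}}\right)$ as $r\to\infty$ outside a set of finite logarithmic measure. Applied to $U=T(r,f)$ and to $U=N(r,f)$ (both nondecreasing and of hyper-order at most $\rho_2(f)<1$) this gives the increment bounds $T(r+|c|,f)=T(r,f)+S(r,f)$ and $N(r+|c|,f)=N(r,f)+S(r,f)$, since $o(N(r,f))=o(T(r,f))=S(r,f)$. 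The second input is the difference analogue of the lemma on the logarithmic derivative, $m\left(r,\frac{f(z+c)}{f(z)}\right)=S(r,f)$ for every $c\ne0$ when $\rho_2(f)<1$; I would obtain it from a Poisson--Jensen pointwise bound on $\log^+\left|\frac{f(z+c)}{f(z)}\right|$ expressed through $T(R,f)$ with a suitable $R>r$, and then absorb the error into $S(r,f)$ using the growth lemma.

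Granting these inputs I would prove (ii) first by an elementary geometric comparison of unintegrated counting functions: a pole of $f(z+c)$ in $|z|\le r$ corresponds to a pole of $f$ in $|w|\le r+|c|$, and every pole of $f$ in $|w|\le r-|c|$ yields a pole of $f(z+c)$ in $|z|\le r$, so that $n(r-|c|,f)\le n(r,f(z+c))\le n(r+|c|,f)$. Integrating and invoking the increment bound $N(r\pm|c|,f)=N(r,f)+S(r,f)$ yields $N(r,f(z+c))=N(r,f)+S(r,f)$; applying the same comparison to $1/f$ gives the zero-counting half of (ii).

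Next, for (iii) I would split $T=m+N$. The pole term is controlled by (ii), while $f(z+c)=\frac{f(z+c)}{f(z)}\,f(z)$ gives $m(r,f(z+c))\le m(r,f)+m\left(r,\frac{f(z+c)}{f(z)}\right)=m(r,f)+S(r,f)$; hence $T(r,f(z+c))\le T(r,f)+S(r,f)$. For the reverse inequality I apply this same estimate to $F(z)=f(z+c)$ with shift $-c$, which gives $T(r,f)=T(r,F(z-c))\le T(r,F)+S(r,F)$; since the forward bound already shows $T(r,F)=O(T(r,f))$, we have $S(r,F)=S(r,f)$, and therefore $T(r,f(z+c))=T(r,f)+S(r,f)$. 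Finally, for (i) I factor
\[
\frac{f^{(k)}(z+c)}{f(z)}=\frac{f^{(k)}(z+c)}{f(z+c)}\cdot\frac{f(z+c)}{f(z)},
\]
so that the proximity function is bounded by the sum of the proximity functions of the two factors. The second factor is $S(r,f)$ by the difference log-derivative input. For the first factor I set $g(z)=f(z+c)$, note $g^{(k)}(z)=f^{(k)}(z+c)$, and apply the classical lemma on the logarithmic derivative to obtain $m\left(r,\frac{g^{(k)}}{g}\right)=S(r,g)$; since $T(r,g)=T(r,f)+S(r,f)$ by (iii), this is $S(r,f)$, closing (i).

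The main obstacle is establishing the two inputs under the mere assumption $\rho_2(f)<1$ rather than finite order. In this subexponential regime the growth lemma is delicate: one must fix $\delta$ below $1-\rho_2(f)$ and keep the exceptional set of finite logarithmic measure, because the crude comparison $T(2r,f)=O(T(r,f))$ fails when $0<\rho_2(f)<1$. Converting the Poisson--Jensen pointwise estimate for $\log^+\left|\frac{f(z+c)}{f(z)}\right|$ into a clean $S(r,f)$ bound, and verifying that all the exceptional sets arising in the pointwise estimates and in the growth lemma remain simultaneously of finite logarithmic measure, is where the bulk of the technical work lies; this is precisely the content of the Halburd--Korhonen--Tohge framework cited as \cite{halburd2014}, on which I would model the argument.
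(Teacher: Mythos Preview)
The paper does not supply its own proof of this lemma: it is stated with the citation \cite{halburd2014} and used as a black box. Your outline is a faithful reconstruction of the standard argument in that reference---the real-variable growth lemma for subexponential $U$, the geometric comparison $n(r-|c|,f)\le n(r,f(z+c))\le n(r+|c|,f)$, the $T=m+N$ split combined with the difference logarithmic-derivative estimate, and the factorisation $\frac{f^{(k)}(z+c)}{f(z)}=\frac{f^{(k)}(z+c)}{f(z+c)}\cdot\frac{f(z+c)}{f(z)}$ to reduce (i) to the classical and difference log-derivative lemmas---so there is nothing to correct.

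One small remark worth flagging: the exceptional sets in the Halburd--Korhonen--Tohge framework are of finite \emph{logarithmic} measure, whereas the paper defines $S(r,f)$ via sets of finite \emph{linear} measure. You have correctly tracked the logarithmic-measure version, which is what the cited source actually delivers; the paper's phrasing is a common but slightly loose convention in this literature.
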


The next lemma is Borel type theorem, which can be found in \cite{yang2003}.
\begin{lem}\cite[Theorem 1.51]{yang2003}
	\label{lm3.1}
	Let $f_1,\,f_2,\ldots,f_n$ be meromorphic functions, $g_1,\,$
	$g_2,\ldots,g_n$ be entire functions satisfying
	the following conditions,
	\begin{itemize}
		\item [\rm{(i)}]$\sum\limits_{j=1}^n{{f_j(z)}}e^{g_j(z)}\equiv0$,
		\item[\rm{(ii)}]For $1\le{j}<{k}\le{n}$, $g_j-g_k$ is not constant,
		\item[\rm{(iii)}]For $1\le{j}\le{n},1\le{t}<{k}\le{n}$, $T(r,f_j)=o\,\{T(r,e^{g_t-g_k})\},$\, $r\rightarrow\infty,\,r\notin{E}$, where E is the set of finite linear measure.
	\end{itemize}
	Then $f_j(z)\equiv0,\, j=1,\ldots,n$.
\end{lem}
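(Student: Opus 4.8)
The statement to be proved is the classical Borel--Nevanlinna theorem on exponential sums, so the plan is to reduce it to an impossibility statement for a family of meromorphic functions whose zeros and poles come only from the coefficients, and then to run a Wronskian/Cramer induction on the number of terms. First I would dispose of the trivial normalisations: discarding any term with $f_j\equiv0$ only weakens (ii)--(iii), so it suffices to prove that the identity in (i) cannot hold with \emph{every} $f_j\not\equiv0$. The case of a single nonzero term is immediate, since $f_1e^{g_1}\equiv0$ with $e^{g_1}$ zero-free forces $f_1\equiv0$. Hence assume $n\ge2$ and all $f_j\not\equiv0$, and divide the relation by $f_ne^{g_n}$ to obtain
\begin{equation}
\sum_{j=1}^{n-1}w_j\equiv-1,\qquad w_j:=\frac{f_j}{f_n}\,e^{g_j-g_n}. \nonumber
\end{equation}
The decisive gain of this normalisation is that the exponentials now appear as $e^{g_j-g_n}$, whose logarithmic derivative $g_j'-g_n'=(g_j-g_n)'$ is, by the lemma on the logarithmic derivative applied to $e^{g_j-g_n}$, small compared with $w_j$. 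Using (iii) one checks that each $w_j$ is transcendental with $T(r,w_j)=(1+o(1))T(r,e^{g_j-g_n})$, that $N(r,w_j)+N(r,1/w_j)=S(r,w_j)$ (all zeros and poles stem from $f_j,f_n$, never from the exponential), and, by (ii), that no quotient $w_i/w_j$ is constant. In particular $w_j'/w_j$ is a small function of $w_j$ for every $j$.

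The theorem is thereby reduced to the following assertion, which I would prove by induction on the number of terms $N$: \emph{a finite collection $w_1,\dots,w_N$ of pairwise non-proportional transcendental meromorphic functions with $N(r,w_j)+N(r,1/w_j)=S(r,w_j)$ cannot satisfy $\sum_{j=1}^N w_j\equiv C$ for any constant $C$.} The engine is differentiation followed by Cramer's rule. Writing $Q_{j,p}:=w_j^{(p)}/w_j$ and differentiating $\sum_j w_j\equiv C$ a total of $p=0,1,\dots,N-1$ times produces the linear system $\sum_{j=1}^N Q_{j,p}\,w_j=b_p$, with $b_0=C$ and $b_p=0$ for $p\ge1$. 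Since each $w_j'/w_j$ is small, every entry $Q_{j,p}$ is a small function, and the coefficient determinant is $\Delta=\det(Q_{j,p})=W(w_1,\dots,w_N)/\prod_j w_j$, where $W$ is the ordinary Wronskian. If $\Delta\not\equiv0$, Cramer's rule expresses each $w_j$ as a ratio of determinants whose entries are small functions, so $T(r,w_j)=o(\max_i T(r,w_i))$ for all $j$; evaluating this at the index realising the maximum gives $T(r)\le o(T(r))$, impossible for transcendental $w_j$.

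The main obstacle is the degenerate case $\Delta\equiv0$, i.e.\ $W(w_1,\dots,w_N)\equiv0$, which says the $w_j$ are $\mathbb{C}$-linearly dependent. Here I would take a minimal nontrivial relation $\sum_{j\in J}\lambda_j w_j=0$ with all $\lambda_j\ne0$ and $|J|\ge2$. If the $\lambda_j$ are not all equal, eliminating one term between this relation and $\sum w_j\equiv C$ yields a shorter relation of exactly the same type (the survivors are nonzero constant multiples of the $w_j$, hence still pairwise non-proportional with deficient zeros and poles), and the inductive hypothesis applies. The delicate sub-case is $J=\{1,\dots,N\}$ with all $\lambda_j$ equal, which forces $C=0$; then $w_1,\dots,w_{N-1}$ are linearly independent, and applying Cramer's rule to the system obtained by differentiating $\sum_{j<N}w_j=-w_N$ shows that each ratio $w_i/w_N$ is itself a small function $\sigma_i$. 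Substituting $w_i=\sigma_i w_N$ back into the relation again lowers the number of terms (the coefficient $1+\sigma_i$ cannot vanish identically, since that would make $w_i/w_N\equiv-1$ constant), returning us to the inductive hypothesis. Carefully organising these degenerate reductions, and keeping the small-function error terms uniform by measuring them against $\max_j T(r,w_j)$, is where the real work lies; the non-degenerate Cramer step and the initial normalisation are routine. As an alternative engine one could instead feed the normalised identity $\sum w_j\equiv-1$ into Cartan's second main theorem (Lemma \ref{lm1.1}) after clearing denominators, but the differentiation argument keeps the bookkeeping most transparent.
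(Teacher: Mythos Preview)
The paper does not give its own proof of this lemma: it is stated as a citation of \cite[Theorem~1.51]{yang2003} (the Borel--Nevanlinna theorem) and used as a black box. So there is no in-paper argument to compare against, and your task reduces to whether your sketch stands on its own.

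Your overall strategy---normalise by dividing through by one term, then run a Wronskian/Cramer argument on the resulting identity $\sum_{j=1}^{n-1}w_j\equiv-1$---is exactly the classical route, and the non-degenerate part (where $\Delta\not\equiv0$) is handled correctly. The weak point is your treatment of the degenerate case. When $J=\{1,\dots,N\}$ with all $\lambda_j$ equal you conclude $w_i=\sigma_i w_N$ with $\sigma_i$ small, and then say ``substituting back into the relation again lowers the number of terms''; but substituting into $\sum_j w_j=0$ only gives $\bigl(1+\sum_{i<N}\sigma_i\bigr)w_N=0$, i.e.\ an identity among the small functions $\sigma_i$, not a shorter relation of the same shape among transcendental $w$'s. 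So the induction does not close as written.

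The standard and much cleaner fix is to impose minimality at the outset: assume $n$ is the smallest integer for which a nontrivial relation $\sum_{j=1}^n f_je^{g_j}\equiv0$ holds with all $f_j\not\equiv0$. Minimality forces the normalised functions $w_1,\dots,w_{n-1}$ to be linearly \emph{independent} (any dependence among them, multiplied back by $f_ne^{g_n}$, would be a shorter relation of the original type), so the Wronskian determinant is automatically nonzero and the degenerate branch never arises. Cramer's rule then gives $T(r,w_j)=o\bigl(\max_i T(r,w_i)\bigr)$ for every $j$, a contradiction. This is the argument in Yang--Yi, and it is what your sketch is reaching for; you just took an unnecessarily circuitous path through the linear-dependence case.
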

\vspace{6pt}

The next result is concerned asymptotic estimation of exponential polynomials.
\vspace{6pt}

\begin{lem}\cite[Lemma 2.5.]{mzq2022}
	\label{lm3.2}
	Let $m$, $q$ be positive integers, $\omega_1,\ldots,\omega_m$ be distinct nonzero complex numbers, and $H_0,\,H_1,$ $\ldots,H_m$ be meromorphic functions of order less than $q$ such that $H_j\not\equiv 0, \, 1\le{j}\le{m}$. Set $\varphi(z)=H_0+\sum\limits_{j=1}^m{{H_j(z)}}e^{\omega_j{z^q}}$. Then the following statements hold.
	\begin{itemize}
		\item[\rm{(i)}]There exist two positive numbers $d_1<d_2$, such that for sufficiently large $r$,   $d_1r^q{\le}T(r,\varphi){\le}d_2r^q$.
		\item[\rm{(ii)}]If $H_0\not\equiv0$, then $m\left(r,\frac{1}{\varphi}\right)=o\,(r^q)$.
	\end{itemize}
\end{lem}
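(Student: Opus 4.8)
The plan is to reduce everything to the competition among the exponential factors $e^{\omega_j z^q}$ on circles $|z|=r$, where with $z=re^{i\theta}$ one has $\mathrm{Re}(\omega_j z^q)=|\omega_j|r^q\cos(q\theta+\arg\omega_j)$, while the coefficients $H_j$ enter only through error terms of size $o(r^q)$ coming from $\rho(H_j)<q$. For the upper bound in (i), I would invoke subadditivity of the Nevanlinna characteristic together with the standard value $T(r,e^{\omega_j z^q})=\frac{|\omega_j|}{\pi}r^q(1+o(1))$, obtaining
\begin{align*}
T(r,\varphi)\le\sum_{j=0}^{m}T(r,H_j)+\sum_{j=1}^{m}T(r,e^{\omega_j z^q})+O(1)=o(r^q)+\Big(\tfrac{1}{\pi}\sum_{j=1}^{m}|\omega_j|\Big)r^q(1+o(1)),
\end{align*}
which gives $T(r,\varphi)\le d_2 r^q$ for large $r$.

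For the lower bound in (i), I would single out a direction in which one exponential strictly dominates. Because the $\omega_j$ are distinct and nonzero, there is a $\theta_0$ at which $\max_{1\le j\le m}|\omega_j|\cos(q\theta_0+\arg\omega_j)$ is positive and attained at a unique index $j_0$; by continuity there are an arc $I\ni\theta_0$ and a $\delta>0$ on which $\mathrm{Re}(\omega_{j_0}z^q)\ge\mathrm{Re}(\omega_j z^q)+2\delta r^q$ for all $j\ne j_0$ and $\mathrm{Re}(\omega_{j_0}z^q)\ge\delta r^q$. On $I$ the term $H_{j_0}e^{\omega_{j_0}z^q}$ swamps the remaining ones, so that $\log^+|\varphi(re^{i\theta})|\ge \log|H_{j_0}(re^{i\theta})|+\delta r^q+O(1)$ there. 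Integrating over $I$ and replacing the pointwise values of $\log|H_{j_0}|$ by their angular average, which Jensen's formula controls by $O(T(r,H_{j_0}))=o(r^q)$, yields $m(r,\varphi)\ge d_1 r^q$ and hence $T(r,\varphi)\ge d_1 r^q$.

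For (ii) I would bound $m(r,1/\varphi)=\frac{1}{2\pi}\int_0^{2\pi}\log^+\frac{1}{|\varphi|}\,d\theta$ by a sector decomposition of $[0,2\pi)$ according to the sign of $\max_j\mathrm{Re}(\omega_j z^q)$. On the sectors where every $\mathrm{Re}(\omega_j z^q)\le-\delta r^q$, the exponential sum obeys $\big|\sum_j H_j e^{\omega_j z^q}\big|\le\big(\sum_j|H_j|\big)e^{-\delta r^q}$, so $|\varphi|\ge\tfrac12|H_0|$ off a set where $H_0$ is super-exponentially small, and the contribution is at most $m(r,1/H_0)+o(r^q)=o(r^q)$; on the sectors where some $\mathrm{Re}(\omega_j z^q)$ is positive and dominant, $\varphi$ is exponentially large and $\log^+\frac{1}{|\varphi|}=0$. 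The only remaining directions lie in arbitrarily small neighbourhoods, of total measure $\eta$, of the finitely many Stokes directions where two exponents balance and where $\varphi$ may vanish; there the integrated singularity is shown to be $o(r^q)$ by a Cartan-type estimate confined to the thin sector, and letting $\eta\to0$ forces $m(r,1/\varphi)=o(r^q)$.

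The main obstacle throughout is that the $H_j$ are merely meromorphic of order $<q$ rather than polynomials, so the pointwise domination of a single exponential term can break down near the zeros and poles of the coefficients and near the Stokes directions; I expect the hardest point to be the near-Stokes contribution in (ii), where cancellation can make $\varphi$ genuinely small. I plan to overcome this by systematically passing from pointwise inequalities to integrated (Jensen- and Cartan-type) estimates, in which the exceptional effect of the coefficients is absorbed into terms $O(T(r,H_j))=o(r^q)$, and by confining the ambiguous directions to a set of arbitrarily small measure. Should complete cancellation along a Stokes direction threaten the lower bounds, I would exclude it via the Borel-type Lemma~\ref{lm3.1}, which forbids a nontrivial identity $\sum_j H_j e^{\omega_j z^q}\equiv 0$ among terms with distinct frequencies and coefficients of controlled growth.
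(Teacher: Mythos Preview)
The paper does not prove Lemma~\ref{lm3.2} at all: it is quoted verbatim from Mao--Liu \cite[Lemma~2.5]{mzq2022} as a preliminary tool, with no argument supplied. There is therefore nothing in the present paper to compare your attempt against.

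As for the attempt itself, the outline for part~(i) is sound: the upper bound by subadditivity and the lower bound by isolating a sector in which a single exponential $e^{\omega_{j_0}z^q}$ strictly dominates are both standard and correct, and your remark that $\int_I\log^-|H_{j_0}|\,d\theta\le 2\pi\,m(r,1/H_{j_0})=o(r^q)$ is exactly what is needed to absorb the coefficient. For part~(ii), however, the argument is incomplete at the point you yourself flag. On the thin sectors near the Stokes lines, $\varphi$ can genuinely vanish (indeed it has $\asymp r^q$ zeros there), and saying that ``a Cartan-type estimate confined to the thin sector'' controls $\int\log^+\frac{1}{|\varphi|}$ is not yet a proof: you have not specified which estimate, nor why shrinking the sector width $\eta$ makes the contribution $o(r^q)$ rather than merely $O(\eta\,r^q)$ with an uncontrolled constant coming from the density of zeros. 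A workable route is to avoid the sector analysis altogether and argue via the first main theorem: write $m(r,1/\varphi)=T(r,\varphi)-N(r,1/\varphi)+O(1)$ and show directly that $N(r,1/\varphi)=T(r,\varphi)+o(r^q)$, or equivalently that $\varphi$ has deficiency $0$ at $0$ when $H_0\not\equiv0$; this is what the original proof in \cite{mzq2022} does, relying on Steinmetz-type estimates for exponential sums rather than a pointwise sector decomposition.
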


The following  lemma  is concerned with the Fermat type differential equation due to   Yang et al. \cite{yang2004}.


\begin{lem}\cite[Theorem 2] {yang2004}
	\label{lm3.3}
	Let $b$ be a nonzero constant, and $a(z)$ be a meromorphic function. If $a(z)$ is not constant, then
	\begin{align*}
		f^2(z)+b(f')^2=a(z)
	\end{align*}
	has no transcendental meromorphic solution $f$ such that $T(r,a)=S(r,f)$.
\end{lem}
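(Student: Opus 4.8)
\textit{Proof proposal.} The plan is to argue by contradiction: suppose the equation $f^{2}(z)+b(f'(z))^{2}=a(z)$ admits a transcendental meromorphic solution $f$ with $T(r,a)=S(r,f)$, and derive $T(r,f)=S(r,f)$, which is impossible. Since $b\neq 0$, fix a constant $c_{0}$ with $c_{0}^{2}=-b$ and factor the left-hand side,
\[
f^{2}+b(f')^{2}=(f+c_{0}f')(f-c_{0}f')=uv,\qquad u:=f+c_{0}f',\ v:=f-c_{0}f'.
\]
Thus $uv=a$ is a small function of $f$. From $u+v=2f$, $u-v=2c_{0}f'$ and $(u+v)'=2f'$ one records the auxiliary identity $u-v=c_{0}(u'+v')$. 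A first observation fixes the growth bookkeeping: $T(r,u),T(r,v)=O(T(r,f))$ because $u,v$ are differential polynomials in $f$, while conversely $f=(u+v)/2$ and $v=a/u$; hence it will suffice to prove $T(r,u)=S(r,f)$, since that immediately forces $T(r,f)\le T(r,u)+T(r,v)+O(1)=S(r,f)$.

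Next I would isolate a first-order differential equation for $u$ alone. Differentiating $uv=a$ gives $u'v+uv'=a'$; eliminating $v'$ via $u-v=c_{0}(u'+v')$ and substituting $v=a/u$ yields
\[
(u^{2}-a)(u-c_{0}u')=c_{0}a'\,u,\qquad\text{equivalently}\qquad u^{2}-a=\frac{c_{0}a'}{1-c_{0}\,u'/u}.
\]
The crucial structural point is the control of the poles of $f$. At a pole of $f$ of order $p$, the $f'$-term dominates and $c_{0}\neq0$, so both $u$ and $v$ acquire a pole of order exactly $p+1$; then $a=uv$ has a pole of order $2p+2$ there with no possible cancellation. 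Since $T(r,a)=S(r,f)$, every pole of $f$ must be a pole of $a$, whence $\overline{N}(r,f)=S(r,f)$, and therefore $\overline{N}(r,u)=\overline{N}(r,v)=S(r,f)$.

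Finally I would run a Nevanlinna estimate on the displayed relation. On one hand $T(r,u^{2}-a)=2T(r,u)+S(r,f)$, because $a$ is small and $T(r,u^{2})=2T(r,u)+O(1)$. On the other hand, writing $u^{2}-a=c_{0}a'\big/(1-c_{0}u'/u)$ and using the lemma on the logarithmic derivative, $m\bigl(r,1-c_{0}u'/u\bigr)=S(r,f)$, while the poles of $1-c_{0}u'/u$ are simple and located at the zeros and poles of $u$, so $N(r,1-c_{0}u'/u)=\overline{N}(r,1/u)+\overline{N}(r,u)$. Combining,
\[
T(r,u^{2}-a)=2T(r,u)+S(r,f)\le \overline{N}\!\left(r,\tfrac{1}{u}\right)+\overline{N}(r,u)+S(r,f).
\]
Since $\overline{N}(r,u)=S(r,f)$ and $\overline{N}(r,1/u)\le T(r,u)+O(1)$, this collapses to $2T(r,u)\le T(r,u)+S(r,f)$, i.e. $T(r,u)=S(r,f)$, and hence $T(r,f)=S(r,f)$, the desired contradiction. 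I expect the main obstacle to be precisely the pole bookkeeping that gives $\overline{N}(r,f)=S(r,f)$: without it the inequality above is a tautology, and it is this fact that turns it into a genuine contradiction, so one must check carefully that no cancellation lowers the pole orders of $u$ and $v$ at poles of $f$ and that the logarithmic-derivative term contributes only $S(r,f)$.
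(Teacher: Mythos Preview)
The paper does not prove this lemma at all: it is quoted as \cite[Theorem~2]{yang2004} and used as a black box in the proof of Theorem~\ref{th1.2}, so there is no in-paper argument to compare against.

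Your proposed proof is correct and is in fact close to the standard factorisation argument for this result. Two small points are worth making explicit so that the final inequality is airtight. First, you should note that $u\not\equiv0$ (otherwise $a=uv\equiv0$) and that $1-c_{0}u'/u\not\equiv0$: if it vanished identically then your identity $(u^{2}-a)(1-c_{0}u'/u)=c_{0}a'$ would force $a'\equiv0$, contradicting the hypothesis that $a$ is nonconstant. Second, the estimate $T(r,a')=S(r,f)$ should be justified: from $T(r,a')\le 2T(r,a)+m(r,a'/a)$ and the logarithmic-derivative lemma one gets $m(r,a'/a)=O(\log(rT(r,a)))=o(T(r,f))$ since $f$ is transcendental. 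Finally, your pole sentence is phrased slightly backwards: the point is that any pole of $f$ is automatically a pole of $a=uv$ (there is no cancellation because $u$ and $v$ both have a pole of order $p+1$ there), whence $\overline{N}(r,f)\le\overline{N}(r,a)\le T(r,a)=S(r,f)$. With these clarifications, the chain
\[
2T(r,u)+S(r,f)=T(r,u^{2}-a)\le T(r,a')+\overline N(r,u)+\overline N(r,1/u)+S(r,f)\le T(r,u)+S(r,f)
\]
gives $T(r,u)=S(r,f)$ and hence $T(r,f)=S(r,f)$, the desired contradiction.
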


\vspace{6pt}
In the case of meromorphic functions of finite order, the following
logarithmic derivative lemma can be found in \cite{Cherry}.
\vspace{6pt}

\begin{lem}\cite[Theorem 3.5.1]{Cherry}\label{lm3.4}
	Let $p$ be a nonnegative real number, $f$ be a nonconstant meromorphic function of finite order $p$. Then  for all $\varepsilon>0$, $r$ sufficiently large,
	\begin{align*}
		m(r,\frac{f'}{f})\le\max\{0,(p-1+\varepsilon)\log r\}+O(1).
	\end{align*}
\end{lem}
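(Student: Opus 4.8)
The plan is to establish this sharp (Gol'dberg--Grinshtein) refinement of the logarithmic derivative lemma directly from the Poisson--Jensen formula, and then to feed in the finite-order hypothesis through the single clean choice of auxiliary radius $\rho=2r$. First I would normalize: multiplying $f$ by a nonzero constant or by a power $z^{k}$ alters $f'/f$ only by an additive term $k/z$, whose proximity function is $O(1)$ for $r\ge 1$, and leaves the order $p$ unchanged, so I may assume $f(0)\neq 0,\infty$. Writing $a_\mu$ for the zeros and $b_\nu$ for the poles of $f$ in $|w|<\rho$ (with $r<\rho$ to be chosen), the Poisson--Jensen formula, differentiated in $z$, gives for $|z|=r$
\begin{align*}
\frac{f'(z)}{f(z)}=\frac{1}{\pi}\int_0^{2\pi}\log|f(\rho e^{i\theta})|\,\frac{\rho e^{i\theta}}{(\rho e^{i\theta}-z)^2}\,d\theta+\sum_{\mu}\Big(\frac{1}{z-a_\mu}+\frac{\overline{a_\mu}}{\rho^2-\overline{a_\mu}z}\Big)-\sum_{\nu}\Big(\frac{1}{z-b_\nu}+\frac{\overline{b_\nu}}{\rho^2-\overline{b_\nu}z}\Big).
\end{align*}

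Next I would bound $m(r,f'/f)$ by splitting the three pieces via $\log^+(A+B)\le \log^+A+\log^+B+\log 2$. The decisive point for the boundary integral is to integrate in the variable $\phi$ (with $z=re^{i\phi}$) \emph{before} estimating, using the mean-value identity
\begin{align*}
\frac{1}{2\pi}\int_0^{2\pi}\frac{d\phi}{|\rho e^{i\theta}-re^{i\phi}|^{2}}=\frac{1}{\rho^{2}-r^{2}}.
\end{align*}
Combined with Jensen's inequality and $\tfrac{1}{2\pi}\int_0^{2\pi}\big|\log|f(\rho e^{i\theta})|\big|\,d\theta\le 2T(\rho,f)+O(1)$, this yields a contribution of size $\log^+\tfrac{T(\rho,f)}{r}+\log^+\tfrac{\rho}{\rho-r}+O(1)$. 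The division by $r$ here — which is precisely what upgrades the exponent from $p$ to $p-1$ — comes from $\tfrac{\rho}{\rho^{2}-r^{2}}\le \tfrac{1}{r}\cdot\tfrac{\rho}{\rho-r}$.

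For the two sums over zeros and poles I would likewise use averaged estimates of $\tfrac{1}{2\pi}\int_0^{2\pi}\log^+\tfrac{1}{|re^{i\phi}-a|}\,d\phi$; summing over the at most $n(\rho,f)+n(\rho,1/f)$ terms, and using $n(\rho,\cdot)\le T(e\rho,f)+O(1)$ from the standard counting estimate, produces a contribution of size $\log^+\tfrac{T(e\rho,f)}{\rho-r}+O(1)$. Assembling the three pieces gives the inequality
\begin{align*}
m\Big(r,\frac{f'}{f}\Big)\le \log^+\frac{T(e\rho,f)}{r}+\log^+\frac{\rho}{\rho-r}+O(1),\qquad r<\rho.
\end{align*}
Finally I would set $\rho=2r$, so $\tfrac{\rho}{\rho-r}=2$ makes the middle term $O(1)$, and invoke the definition of order: for every $\varepsilon>0$ one has $T(e\rho,f)=T(2er,f)\le r^{\,p+\varepsilon}$ for all sufficiently large $r$. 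Hence $\log^+\tfrac{T(2er,f)}{r}\le \log^+ r^{\,p-1+\varepsilon}=\max\{0,(p-1+\varepsilon)\log r\}+O(1)$, which is the asserted bound; the $\max$ appears automatically because $\log^+$ vanishes as soon as $p-1+\varepsilon\le 0$.

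The step I expect to be the main obstacle is the estimate of the zero--pole sums with the \emph{sharp} exponent and without any stray additive $\log r$ term. A crude pointwise bound such as $|re^{i\phi}-a_\mu|\ge\rho-r$ is far too wasteful: it would only reproduce the exponent $p$ and, worse, would destroy the desired $O(1)$ conclusion in the range $p<1$. One must instead exploit the averaging over the circle $|z|=r$, so that the factor $\tfrac{1}{\rho-r}=\tfrac{1}{r}$ combines with $n(2er,\cdot)=O(r^{\,p+\varepsilon})$ to give $O(r^{\,p-1+\varepsilon})$ inside the logarithm, matching the boundary contribution exactly and thereby delivering the sharp $\max\{0,(p-1+\varepsilon)\log r\}$ form rather than a weaker $O(\log r)$ estimate.
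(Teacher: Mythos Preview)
The paper does not prove this lemma; it is quoted as \cite[Theorem 3.5.1]{Cherry} and invoked as a black box in the proof of Theorem~\ref{th1.2} (to force the auxiliary functions $g$, $h_1$, $h_2$ there to be constants). So there is no argument in the paper to compare against. Your outline is in fact the Gol'dberg--Grinshtein route taken in Cherry--Ye, and the boundary-integral step, the choice $\rho=2r$, and the final arithmetic with $\log^{+}r^{\,p-1+\varepsilon}$ are all correct.

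There is, however, a real gap in the discrete part. ``Averaged estimates of $\tfrac{1}{2\pi}\int_0^{2\pi}\log^{+}\tfrac{1}{|re^{i\phi}-a|}\,d\phi$'' summed over the $N=n(\rho,f)+n(\rho,1/f)$ points do not yield $\log^{+}\!\bigl(N/(\rho-r)\bigr)$: one cannot pass from $\log^{+}\!\sum_\mu$ to $\sum_\mu\log^{+}$, and the Jensen step $\int\log^{+}\!S\,d\phi\le \log^{+}\!\int S\,d\phi$ fails here because $\tfrac{1}{2\pi}\int_0^{2\pi}|re^{i\phi}-a|^{-1}\,d\phi$ diverges when $|a|=r$. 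The Blaschke-type pieces $\overline{a_\mu}/(\rho^{2}-\overline{a_\mu}z)$ are indeed bounded pointwise by $1/(\rho-r)$, so for \emph{those} your mechanism is right; the principal parts $1/(z-a_\mu)$ are the obstruction. The device that makes them work in Cherry--Ye is to fix $\alpha\in(0,1)$, use the sub-additivity $(\sum x_\mu)^{\alpha}\le\sum x_\mu^{\alpha}$ \emph{before} averaging, and then apply the uniform bound
\[
\frac{1}{2\pi}\int_0^{2\pi}\frac{d\phi}{|re^{i\phi}-a|^{\alpha}}\le \frac{C_\alpha}{r^{\alpha}}\qquad(a\in\mathbb{C},\ r>0);
\]
this is where the factor $1/r$ on the discrete side actually originates, not from $\tfrac{1}{\rho-r}$. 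After that the contribution is $\alpha^{-1}\log^{+}(C_\alpha N r^{-\alpha})$, and one absorbs the $\alpha^{-1}$ into the~$\varepsilon$. Your final paragraph locates the difficulty in the right place but misidentifies the mechanism.
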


The following lemma concerns the  lower order of composite functions.
\vspace{6pt}

\begin{lem}\cite[Lemma 1]{be}\label{lm3.5}
Let $f$ be a meromorphic function and let $g$ be a transcendental entire function. If the lower
order $\mu (f\circ g)<\infty$, then $\mu (f)=0$.
\end{lem}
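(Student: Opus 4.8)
The plan is to prove the contrapositive: assuming $\mu(f)>0$, I would show $\mu(f\circ g)=\infty$, which contradicts $\mu(f\circ g)<\infty$ and therefore forces $\mu(f)=0$. First note the statement is only nontrivial when $f$ is transcendental: if $f$ is rational (or constant) then $T(r,f)=O(\log r)$, so $\mu(f)=0$ and the conclusion holds automatically. Hence I may assume $f$ is transcendental with $\mu(f)>0$, and I fix $\alpha$ with $0<\alpha<\mu(f)$; by the definition of the lower order there is $s_0$ so that $T(s,f)\ge s^{\alpha}$ for all $s\ge s_0$. Next I would record the super-polynomial growth of $g$: since $g$ is transcendental entire, $\lim_{r\to\infty}\frac{\log M(r,g)}{\log r}=+\infty$, for otherwise $M(r_j,g)\le r_j^{C}$ along some $r_j\to\infty$, and Cauchy's estimates would force all Taylor coefficients of $g$ of index exceeding $C$ to vanish, making $g$ a polynomial.

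The central ingredient, and the step I expect to be the main obstacle, is a lower bound for the Nevanlinna characteristic of a composition of the shape
$$T(r,f\circ g)\ \ge\ c\,T\big(\psi(r),f\big)\qquad (r\ge r_1),$$
where $\psi(r)\asymp M(r/2,g)$ and $c>0$ is an absolute constant. For entire $f$ this is essentially P\'olya's inequality $M(r,f\circ g)\ge M\big(\tfrac18 M(\tfrac r2,g),f\big)$, valid after the harmless normalization $g(0)=0$. The delicate point is the meromorphic case, where $M(\cdot,f)$ is meaningless and one must transfer the characteristic directly. I would do this through value distribution: choose a finite value $a$ with deficiency $\delta(a,f)=0$ (all but countably many $a$ qualify, since the deficiencies sum to at most $2$), so that $N\big(s,\tfrac{1}{f-a}\big)$ is comparable to $T(s,f)$ for large $s$, and then count the $a$-points of $f\circ g$ as the $g$-preimages of the $a$-points of $f$. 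Controlling how many of these preimages fall in $|z|\le r$, i.e.\ showing that $g$ covers, in an averaged sense, the disc of radius $\asymp M(r/2,g)$, is the technical heart and is exactly where the transcendence of $g$ enters quantitatively; this is the estimate carried out in \cite{be}.

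Finally I would combine the three pieces. Substituting the characteristic lower bound and then $T(s,f)\ge s^{\alpha}$ gives, for large $r$,
$$T(r,f\circ g)\ \ge\ c\,\psi(r)^{\alpha}\ \ge\ c'\,M(r/2,g)^{\alpha}.$$
Taking logarithms and dividing by $\log r$ yields
$$\frac{\log T(r,f\circ g)}{\log r}\ \ge\ \alpha\,\frac{\log M(r/2,g)}{\log r}+O\!\Big(\tfrac{1}{\log r}\Big)\ \longrightarrow\ \infty ,$$
by the super-polynomial growth of $g$. Hence $\mu(f\circ g)=\liminf_{r\to\infty}\frac{\log^{+} T(r,f\circ g)}{\log r}=\infty$, contradicting the hypothesis $\mu(f\circ g)<\infty$. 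This completes the contrapositive and yields $\mu(f)=0$. The only genuinely hard input is the composition inequality in the meromorphic setting; everything else is a short combination of the lower-order estimate for $f$ and the growth rate of the transcendental function $g$.
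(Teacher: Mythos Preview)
The paper does not prove this lemma; it is simply quoted from Bergweiler \cite{be}. Your outline is a faithful sketch of the argument found there: set up the contrapositive, record that a transcendental entire $g$ satisfies $\log M(r,g)/\log r\to\infty$, invoke a composition lower bound of the shape $T(r,f\circ g)\ge c\,T(\psi(r),f)$ with $\psi(r)$ comparable to $M(r/2,g)$, and combine these to force $\mu(f\circ g)=\infty$. You correctly flag the only substantive step---that composition inequality in the meromorphic case---and explicitly defer it to \cite{be} rather than claiming to prove it. So there is nothing to compare against: your proposal and the paper both ultimately rest on Bergweiler's estimate as a black box; you have merely unpacked the surrounding logic that the paper leaves implicit by citing the final statement.
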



\vspace{6pt}	
	The next result is concerned asymptotic estimation of the solution $f$ of \eqref{0.1.4} in Theorem \ref{th1.1}.
\vspace{6pt}	
	
	\begin{lem}
		\label{lm3.6}
		Let $l$, $k\ge1$, $n>4$ and $m\ge2$ or $m>4$ and $n\ge2$ be  integers, and let $f$ be a meromorphic solution of \eqref{0.1.4} with $N(r,f)=S(r,f)$ and $\rho_2(f)<1$. Then
		$\mu(f)=k$.
	\end{lem}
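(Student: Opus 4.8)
The plan is to pin down the growth of $f$ by exploiting the structure of the right-hand side $h(z)=p_1(z)e^{a_1z^k}+p_2(z)e^{a_2z^k}$, whose Nevanlinna characteristic is comparable to $r^k$ by Lemma \ref{lm3.2}(i). First I would show $\rho(f)=k$. Since $N(r,f)=S(r,f)$, write $F=f^n$ and $G=a(f^{(l)}(z+c))^m$. By Lemma \ref{lm 1.3}(i) we have $m(r,f^{(l)}(z+c)/f)=S(r,f)$, hence $T(r,G)\le mT(r,f^{(l)}(z+c))+S(r,f)= O(T(r,f))$, and likewise $T(r,F)=nT(r,f)+S(r,f)$; combining with $T(r,h)\asymp r^k$ from \eqref{0.1.4} this gives $r^k=O(T(r,f))$, so $\mu(f)\ge k$ once I also control $\rho(f)$ from above. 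For the upper bound, the first main theorem applied to \eqref{0.1.4} yields $T(r,f)\le \tfrac1n T(r,h)+\tfrac1n T(r,G)+S(r,f)$; estimating $T(r,G)$ by $m\,T(r,f^{(l)}(z+c))+S(r,f)$ and then by $m\,T(r,f)+S(r,f)$ using Lemma \ref{lm 1.3}(iii) and $N(r,f)=S(r,f)$ plus the logarithmic derivative bound, one gets $(1-\tfrac mn)T(r,f)\le \tfrac1n T(r,h)+S(r,f)$ when $n>m$, and the symmetric inequality dividing through $G$ when $m>n$; in either regime $T(r,f)=O(r^k)$, so $\rho(f)\le k$. Hence $\rho(f)=k$ and in particular $f$ has finite order.

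The heart of the matter is upgrading $\rho(f)=k$ to $\mu(f)=k$, i.e. ruling out that $f$ grows slowly along a sparse sequence of radii. Since $\rho(f)=k<\infty$, I can invoke Lemma \ref{lm3.4}: $m(r,f'/f)=O(\log r)$, and more generally $m(r,f^{(l)}(z+c)/f)=O(\log r)$ via Lemma \ref{lm 1.3}(i) (whose hypothesis $\rho_2(f)<1$ holds). Now suppose for contradiction that $\mu(f)<k$. Pick $\varepsilon>0$ with $\mu(f)<k-\varepsilon$; then there is a sequence $r_j\to\infty$ with $T(r_j,f)=O(r_j^{\,k-\varepsilon})$. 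Along this sequence $T(r_j,F)=nT(r_j,f)+S=O(r_j^{\,k-\varepsilon})$ and $T(r_j,G)\le m\,T(r_j,f)+O(\log r_j)=O(r_j^{\,k-\varepsilon})$, whence by \eqref{0.1.4} $T(r_j,h)\le T(r_j,F)+T(r_j,G)+O(1)=O(r_j^{\,k-\varepsilon})$. This flatly contradicts the lower bound $T(r,h)\ge d_1 r^k$ valid for \emph{all} large $r$ from Lemma \ref{lm3.2}(i). Therefore $\mu(f)\ge k$, and combined with $\mu(f)\le\rho(f)=k$ we conclude $\mu(f)=k$.

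The main obstacle I anticipate is making the estimate $T(r,G)=O(T(r,f))+O(\log r)$ clean when $l\ge1$ and $c\neq0$ simultaneously: one must chain the shift estimate $T(r,f^{(l)}(z+c))=T(r,f^{(l)})+S(r,f)$ (Lemma \ref{lm 1.3}(iii)) with a derivative estimate $T(r,f^{(l)})\le T(r,f)+l\,m(r,f^{(l)}/f)+\bar N$-terms, and here the condition $N(r,f)=S(r,f)$ is essential to keep the pole-counting contributions negligible, while $\rho_2(f)<1$ is what legitimizes Lemma \ref{lm 1.3}. A second delicate point is the bookkeeping of which of $F$ or $G$ dominates when $n$ and $m$ are close (the cases $n>m$ versus $m>n$ in the hypothesis $n>4,\,m\ge2$ or $m>4,\,n\ge2$ must be handled symmetrically); but since we only need the two-sided polynomial bound $d_1r^k\le T(r,f)\le d_2r^k$ rather than an exact constant, the argument goes through in all cases without the finer Cartan-type analysis used later in Theorem \ref{th1.1}.
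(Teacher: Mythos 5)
Your lower bound $\mu(f)\ge k$ is fine and is the same as the paper's first step. The gap is in the upper bound. You deduce $\mu(f)\le k$ from the stronger claim $\rho(f)\le k$, and your argument for that claim splits into ``$n>m$'' and ``$m>n$'' and asserts both work. Neither half covers $n=m$ (which is admissible here, e.g. $n=m=5$, and is precisely the case needed for Corollary \ref{c1}); there the bookkeeping degenerates to $0\cdot T(r,f)\le \tfrac1n T(r,h)+S(r,f)$, which says nothing. Worse, the ``symmetric inequality dividing through $G$'' when $m>n$ does not exist in the form you need: from $a(f^{(l)}(z+c))^m=h-f^n$ you get $mT(r,f^{(l)}(z+c))\le T(r,h)+nT(r,f)+O(1)$, and to close the loop you must bound $T(r,f)$ from \emph{above} by $T(r,f^{(l)}(z+c))$. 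The best general estimate of that kind, even using $N(r,f)=S(r,f)$, is $T(r,f)\le 2T(r,f^{(l)})+S(r,f)$ (the term $N(r,1/f^{(l)})$ arising from $m(r,f/f^{(l)})$ cannot be discarded), so you only obtain $(m-2n)T(r,f^{(l)}(z+c))\le T(r,h)+S(r,f)$, whose coefficient is negative for, e.g., $(n,m)=(3,5)$, $(4,5)$ or $(5,6)$ --- all admissible. So $\rho(f)\le k$ is not established, and with it the upper half of the lemma.

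The paper's proof sidesteps this entirely, and it is the idea you are missing: if $\mu(f)>k$, then $T(r,f)\ge r^{k+\varepsilon}$ for some $\varepsilon>0$ and \emph{all} large $r$, while $T(r,h)=O(r^k)$ by Lemma \ref{lm3.2}(i); hence $h$ is a small function of $f$ and also of $f^{(l)}(z+c)$. Dividing \eqref{0.1.4} by $h$ puts the equation in the form \eqref{0.1.2} with small coefficients; since $N(r,f)=S(r,f)$ forces $\Theta(\infty,f)=\Theta(\infty,f^{(l)}(z+c))=1$, and $\tfrac1n+\tfrac1m\le\tfrac12+\tfrac15<1$ in every admissible case, Theorem \ref{thb} rules out any nonconstant solution --- a contradiction. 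This argument is symmetric in $n$ and $m$ and needs no case distinction. (A minor point: your second paragraph only re-proves $\mu(f)\ge k$, which your first paragraph already gives directly from $d_1r^k\le(n+m)T(r,f)+S(r,f)$; it does not contribute to the missing upper bound.)
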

	\begin{proof}[Proof of Lemma \ref{lm3.6}]
		Let $f$ be a meromorphic solution of \eqref{0.1.4} with $N(r,f)=S(r,f)$ and $\rho_2(f)<1$. By Lemmas \ref{lm3.2} and \ref{lm 1.3}, for sufficiently large $r$, we get $d_1r^k= T(p_1(z)e^{a_1z^k}+p_2(z)e^{a_2z^k}) \le (n+m) T(r,f) +S(r,f)$, where $d_1$ is a nonzero constant. Therefore $\mu(f)\ge k$.
		
		 We claim  $\mu(f)=k$.  Otherwise $\mu(f)>k$, which means   $p_1(z)e^{a_1z^k}+p_2(z)e^{a_2z^k}$ is a small function of $f$ and $f^{(l)}(z+c)$. Since $n>4$ and $m\ge2$ or  $n\ge2$ and $m>4$,  ${\Theta(\infty,f)}={\Theta(\infty,f^{(l)}(z+c))}=1$, by Theorem \ref{thb}, we have \eqref{0.1.4} has no nonconstant meromorphic solution, which  implies a contradiction. Thus $\mu(f)=k$.
	\end{proof}
	
	\section{Proof of Theorem \ref{th1.1} and Corollary \ref{co2}}
	
	\begin{proof}[Proof of Theorem \ref{th1.1}] By  Lemma \ref{lm3.6}, we get $\mu(f)=k$.
		 The following proof will be divided into two cases: $a(f^{(l)}(z+c))^m$, $p_1(z)e^{a_1z^k}$ and $p_2(z)e^{a_2z^k}$ are linearly dependent or linearly independent.
		
		\setcounter{case}{0}
		\begin{case}
			\rm{ $a(f^{(l)}(z+c))^m$, $p_1(z)e^{a_1z^k}$ and $p_2(z)e^{a_2z^k}$ are linearly dependent. There exist constants  $b_1, b_2$, not all zero, such that
				\begin{align}
					\label{3.1}
					a(f^{(l)}(z+c))^m=b_1p_1(z)e^{a_1z^k}+b_2p_2(z)e^{a_2z^k}.
				\end{align}
				Suppose that neither $b_1$ nor $b_2$ is zero. Rewriting \eqref{3.1} into
				\begin{align}
					\label{3.2}
					\frac{a}{b_1p_1}\left(\frac{f^{(l)}(z+c)}{e^{\frac{a_1z^k}{m}}}\right)^m+ \frac{-b_2p_2}{b_1p_1}\left( e^{\frac{(a_2-a_1)}{4}z^k}\right)^4=1.
				\end{align}
				It is easy to see $T\left(r,\frac{f^{(l)}(z+c)}{e^{\frac{a_1z^k}{m}}}\right)=O(r^k)$, since $n>4$ and $m\ge2$ or  $n=4$ and $m>4$,  ${\Theta(\infty,f)}={\Theta(\infty,f^{(l)}(z+c))}=1$, by Theorem \ref{thb}, then \eqref{3.2} has no nonconstant meromorphic solution, which  implies a contradiction. Therefore  $b_1=0$ or $b_2=0$.
				
				If $b_1=0$, then $a(f^{(l)}(z+c))^m=b_2p_2(z)e^{a_2z^k}$. Substituting it into the equation \eqref{0.1.4}, we obtain
				\begin{align}
					\label{3.3}
					f^n(z)=p_1(z)e^{a_1z^k}+(1-b_2)p_2(z)e^{a_2z^k}.
				\end{align}
				Using a similar method as above for equation \eqref{3.2} to analyze \eqref{3.3}, we have $b_2=1$. Then $f^n(z)=p_1(z)e^{a_1z^k}$, which means $f(z)=B_1(z)e^{\frac{a_1z^k}{n}}$, where $B_1^n=p_1$.  Substituting it into the equation \eqref{0.1.4}, we obtain $ma_1=na_2$. We get the  conclusion (i) of Theorem \ref{th1.1}.
				
				If $b_2=0$, then $a(f^{(l)}(z+c))^m=b_1p_1(z)e^{a_1z^k}$. Substituting it into the equation \eqref{0.1.4},  and using the similar method as above for $b_1=0$ to analyze $b_2=0$, we have $f(z)=B_2(z)e^{\frac{a_2z^k}{n}}$, where $B_2^n=p_2$ and $na_1=ma_2$. We get conclusion (ii) of Theorem \ref{th1.1}.
				
			}
		\end{case}

		\begin{case}
			\rm{$a(f^{(l)}(z+c))^m$, $p_1(z)e^{a_1z^k}$ and $p_2(z)e^{a_2z^k}$ are linearly independent. Since $N(r,f)=S(r,f)$, $p_1, p_2$ are small function of $e^{a_1z^k}$, then it is easy  to construct a meromorphic function $g(z)$ satisfying $T(r,g)=S(r,f)$ such that $gf^n$, $ga(f^{(l)}(z+c))^m$, $gp_1(z)e^{a_1z^k}$ and $gp_2(z)e^{a_2z^k}$ become entire functions and  have no common zeros.  Now
				\begin{align}
					\label{3.4}
					gf^n(z)=-ga(f^{(l)}(z+c))^m+gp_1(z)e^{a_1z^k}+gp_2(z)e^{a_2z^k},
				\end{align} and $\max\{{\lvert{gf^n(z)|,|ga(f^{(l)}(z+c))^m}\rvert},{\lvert{gp_1(z)e^{a_1z^k}}\rvert},{\lvert gp_2(z)e^{a_2z^k}\rvert}\}{>0}$.
				Using Lemmas \ref{lm1.1}, \ref{lm1.2} and \ref{lm 1.3} to \eqref{3.4}, we have
				\begin{align}
					\label{3.5}
					nN\left(r,\frac{1}{f}\right)&= N(r,\frac{1}{f^n}){\le}N(r,\frac{1}{f^ng})+S(r,f)	{\le}T(r)+S(r,f)\\ \nonumber
					&{\le}\sum_{j=1}^2N_2(r,\frac{1}{p_je^{a_jz^k}g})+N_2(r,\frac{1}{-ga(f^{(l)}(z+c))^m})+ N_2(r,\frac{1}{f(z)^ng})\\ \nonumber
					&+o(T(r))+S(r,f)\\ \nonumber
					&{\le}2N\left(r,\frac{1}{f^{(l)}(z+c)}\right)+2N(r,\frac{1}{f})+o(T(r))+S(r,f)\\ \nonumber
					&{\le}4N\left(r,\frac{1}{f}\right)+o(T(r))+S(r,f),\nonumber
				\end{align}
				where\begin{align*}
					T(r)={\frac{1}{2\pi}}{\int_{0}^{2\pi}}u(re^{i\theta})d\theta-u(0),
				\end{align*}
				 and
				$u(z)=\sup\{ \log{\lvert{ga(f^{(l)}(z+c))^m}\rvert},\log{\lvert{gp_1(z)e^{a_1z^k}}\rvert}, \log{\lvert{gp_2(z)e^{a_2z^k}}\rvert}  \}.$
				
			By \eqref{3.5},	it is easy to get $T(r)\le O(T(r,f))$, Thus $o(T(r))=S(r,f)$.
				Note that \eqref{3.5} holds when $n>4$ and $m\ge2$ or $n=4$ and $m>4$. Therefore, by \eqref{3.5},  we have
				
				\begin{align}
					\label{3.6}
					2N(r,\frac{1}{f})\le(n-2)N(r,\frac{1}{f})\le2N\left(r,\frac{1}{f^{(l)}(z+c)}\right)+S(r,f).
				\end{align}
				
				We first consider the case $n>4$ and $m\ge2$. By \eqref{3.5}, we have  $(n-4)N\left(r,\frac{1}{f}\right)\le S(r,f)$. 	Since $n>4$, then $N(r,\frac{1}{f})=S(r,f)$. Because $N(r,f)=S(r,f)$ and $N\left(r,\frac{1}{f}\right)=S(r,f)$,  by the  Weierstrass's factorization theorem, then   $f(z)=\gamma (z)e^{h(z)}$,  where $\gamma$ is a small function of $f$, $h$ is a nonconstant entire function. Thus $T(r,f)=O(T(r,e^{h}))$, $\mu (f)= \mu (e^{h})=k$, and $\rho(f)=\rho(e^h)$. If $h$ is  transcendental, since $\mu (e^z)=1$, by Lemma \ref{lm3.5}, then $\mu(f)=\mu(e^{h})=\infty$,   which is contradictory to  $\mu(f)=k$.  If $h$ is a polynomial, then $\mu(e^h)=\rho(e^h)=k$. Therefore $T(r,f)=O(T(r,e^h))=O(r^k)$. Now, it can be derived from the definition of $T(r)$ that $T(r)=O(r^k)=O(T(r,f))$.  By \eqref{3.5} and $N\left(r,\frac{1}{f}\right)=S(r,f)$, we have  $T(r)\le 4N\left(r,\frac{1}{f}\right)+S(r,f)=S(r,f)$, which is a contradiction.

				Next, we consider the case   $n=4$ and $m>4$.
				If $f^4(z)$, $p_1(z)e^{a_1z^k}$ and $p_2(z)e^{a_2z^k}$ are linearly dependent, then there exist constants  $b_3, b_4$, not all zero, such that
				$f^4(z)=b_3p_1(z)e^{a_1z^k}+b_4p_2(z)e^{a_2z^k}$. Which means $a(f^{(l)}{z+c})^m=(1-b_3)p_1(z)e^{a_1z^k}+(1-b_4)p_2(z)e^{a_2z^k}.$  Therefore $a(f^{(l)}(z+c))^m$, $p_1(z)e^{a_1z^k}$ and $p_2(z)e^{a_2z^k}$ are linearly dependent, this contradicts the assumption of Case 2.
				Thus $f^4(z)$, $p_1(z)e^{a_1z^k}$ and $p_2(z)e^{a_2z^k}$ are linearly independent, by using the similar method as above for  $a(f^{(l)}(z+c))^m$, $p_1(z)e^{a_1z^k}$ and $p_2(z)e^{a_2z^k}$ are linearly independent, we get
				\begin{align}
					\label{3.7}
					mN\left(r,\frac{1}{f^{(l)}(z+c)}\right)&= N(r,\frac{1}{(f^{(l)}(z+c))^m})\\ \nonumber
					&{\le}N(r,\frac{1}{ga(f^{(l)}(z+c))^m})+S(r,f)\\ \nonumber
					&{\le}T_1(r)+S(r,f)\\ \nonumber
					&{\le}\sum_{j=1}^2N_2(r,\frac{1}{p_je^{a_jz^k}g})+N_2(r,\frac{1}{ga(f^{(l)}(z+c))^m})+ N_2(r,\frac{1}{-f^4(z)g})\\  \nonumber
					&+o(T_1(r))+S(r,f)\\ \nonumber
					&{\le}2N\left(r,\frac{1}{f^{(l)}(z+c)}\right)+2N(r,\frac{1}{f})+o(T_1(r))+S(r,f), \nonumber
				\end{align}
				where
				\begin{align*}
					T_1(r)={\frac{1}{2\pi}}{\int_{0}^{2\pi}}u_1(re^{i\theta})d\theta-u_1(0),
				\end{align*}
				$u_1(z)=\sup\{ \log{\lvert{gf^4(z)}\rvert},\log{\lvert{gp_1(z)e^{a_1z^k}}\rvert}, \log{\lvert{gp_2(z)e^{a_2z^k}}\rvert}  \}.$
				By \eqref{3.7}, then $T_1(r)\le O(T(r,f))$,  $o(T_1(r))=S(r,f)$.
				From \eqref{3.6} and \eqref{3.7},  then $T_1(r)\le4N\left(r,\frac{1}{f^{(l)}(z+c)}\right)+S(r,f)$, $mN\left(r,\frac{1}{f^{(l)}(z+c)}\right)$$\le 4N\left(r,\frac{1}{f^{(l)}(z+c)}\right)+S(r,f)$.
				Since $m>4$, just as in case $n>4$ and $m\ge2$,  then we can aslo have a contradiction.
				
			}
		\end{case}
	\end{proof}
	
	\begin{proof}[Proof of Corollary \ref{co2}]
		The proof Corollary \ref{co2} is largely similar to that of Theorem \ref{th1.1}, so we will only elaborate on the differences.
		The following proof will be divided into two cases:   $(f^{(l)}(z+c))^m$, $p_1(z)e^{a_1z^k}$ and $p_2(z)e^{a_2z^k}$ are linearly dependent or linearly independent.
		
		$(f^{(l)}(z+c))^m$, $p_1(z)e^{a_1z^k}$ and $p_2(z)e^{a_2z^k}$ are linearly dependent. Then the proof method is exactly the same as that in Case 1 of Theorem \ref{th1.1}, so it is omitted here.
		
		$(f^{(l)}(z+c))^m$, $p_1(z)e^{a_1z^k}$ and $p_2(z)e^{a_2z^k}$ are linearly independent.
		If $n>4$ and $m\ge2$,  similarly to Case 2 of Theorem \ref{th1.1}, we have \eqref{3.5}. By \eqref{3.5}, we have  $(n-4)N\left(r,\frac{1}{f}\right)\le S(r,f)$, and $T(r)\le 4N\left(r,\frac{1}{f}\right)+S(r,f) $. Since $n>4$, then $N(r,\frac{1}{f})=S(r,f)$, we have $T(r)=S(r,f)$, which is a contradiction.
		
		If $n\ge2$ and $m>4$, similarly to \eqref{3.7} in Case 2 of Theorem \ref{th1.1},   we get $T_1(r)\le4N\left(r,\frac{1}{f^(z+c)}\right)+S(r,f)$ and $mN\left(r,\frac{1}{f(z+c)}\right)\le 4N\left(r,\frac{1}{f}\right)+S(r,f)$.  By Lemma \ref{lm 1.3}, we have $mN(r,\frac{1}{f})+S(r,f)=mN\left(r,\frac{1}{f(z+c)}\right)\le 4N\left(r,\frac{1}{f}\right)+S(r,f)$. Since $m>4$, then $N(r,\frac{1}{f})=S(r,f)$, we have $T_1(r)=S(r,f)$, which is a contradiction.
	\end{proof}
	
	\section{Proof of  Theorem \ref{th1.2}}
	Let $f$ be an entire function of \eqref{1.6},
	 by Lemma \ref{lm3.2} and \eqref{1.6}, we obtain $O(r)=m(r,p_1e^{bz}+p_2e^{-bz})\le 3nT(r,f)$. Thus, $\mu(f)\ge 1$.  If $\mu(f)>1$, then $p_1e^{bz}+p_2e^{-bz}$ is a small function of $f$.  Using Theorem \ref{thb} and Lemma \ref{lm3.3} to \eqref{1.6}, then \eqref{1.6} has no  transcendental meromorphic solution $f$, which is  a contradiction. Therefore, $\mu(f)=1$.
	
	We claim $f$ have infinite many zeros, otherwise, it is easy to get a contradiction by  Hadamard factorization theorem and   Lemma \ref{lm3.1}. Because $p_1e^{az}+p_2e^{-az}$ has only simple zero, then \eqref{1.6} implies $f'$ and $f$  don't have common zero, which means the zero of $f$ and $f'$ is simple.
	
	Differentiating \eqref{1.6} , then
	\begin{align}
		\label{4.7}
		nf^{n-1}f'+ na(f')^{n-1}f''=p_1be^{bz}-p_2be^{-bz}.
	\end{align}
If $f'$ has zeros, then
from \eqref{4.7} and $n\ge2$,  the zero of $f'$ must be the zero of $p_1e^{bz}+p_2e^{-bz}$. Therefore $N(r,\frac{1}{f'})=O(r)$, by the  Weierstrass's factorization theorem, then   $f'(z)=\gamma (z)e^{h(z)}$,  where $\gamma$ is an entire function with $\mu(\gamma)=\rho(\gamma)=1$, $h$ is a  entire function. If $h$ is  transcendental, since $\mu (e^z)=1$, by Lemma \ref{lm3.5}, then $\mu(f)=\mu(e^{h})=\infty$,  which is contradictory to  $\mu(f')=1$.  If $h$ is a polynomial, then we claim $\deg(h)\le 1$, otherwise $\mu(f')=\deg(h)>1$,  which is contradictory to  $\mu(f)=1$.  Therefore $\mu(f')=\rho(f')=1$, which means $T(r,f)=O(r)$. If $f'$ has no zeros, by applying the same method, then $\rho (f)=\mu (f)=1$,  which means $T(r,f)=O(r)$. Therefore, regardless of whether $f'$ has any zeros, it always holds that $T(r,f)=O(r)$.

Differentiating \eqref{4.7} two times, then
	\begin{align}
		\label{4.8}
		A_n^2f^{n-2}(f')^2+nf^{n-1}f''+A_n^2a(f')^{n-2}(f'')^2+na(f')^{n-1}f'''=b^2p_1e^{bz}+b^2p_2e^{-bz},
	\end{align}
	\begin{align}
		\label{4.9}
		A_n^3f^{n-3}(f')^3+3A_n^2f^{n-2}f'f''+nf^{n-1}f'''+A_n^3a(f')^{n-3}(f'')^3\\ \nonumber
		+3aA_n^2(f')^{n-2}f''f''' +na(f')^{n-1}f^{(4)}=b^3p_1e^{bz}-b^3p_2e^{-bz},
	\end{align}
	where   $A_n^m=\frac{n!}{(n-m)!}$.
	Using \eqref{4.8} and \eqref{1.6} to eliminate $e^{bz}$ and $e^{-bz}$, then
	\begin{align}
		\label{4.10}
		f^{n-2}[b^2f^2-A_n^2(f')^2-nff'']=(f')^{n-2}[-b^2a(f')^2+A_n^2a(f'')^2+naf'f'''].
	\end{align}
	
	Next we will divide the proof into the following three cases: $n=4$, $n=3$ and $n=2$.
	
	\vspace{6pt}
	\setcounter{case}{0}
	\begin{case}
		\rm{
			$n=4$. \eqref{4.9} and \eqref{4.7} implies
			\begin{align}
				\label{4.11}
				4f^{3}f'''=4b^2f^{3}f'+4b^2a(f')^{3}f''-A_4^3f(f')^3-\\ \nonumber
				3A_4^2f^{2}f'f''-aA_4^3(f')(f'')^3-3aA_4^2(f')^{2}f''f'''-4a(f')^{3}f^{(4)}.
			\end{align}
			Then  \eqref{4.11} implies the zero of $f'$ must be the zero of $f'''$. Therefore
			\begin{align}
				\label{4.12}
				\frac{f'''}{f'}=g,
			\end{align}
			where $g$ is a nonzero entire function. Noting  that $T(r,f)=O(r)$  and  Lemma \ref{lm3.4},  then $m(r,g)=o(\log r)$, which means $g$ is a nonzero constant. By integration \eqref{4.12},
			\begin{align}
				\label{4.121}
				gf+h=f'',
			\end{align}
			where $h$ is a constant.
			
			In the following, we discuss whether $h=0$.
			
			\vspace{6pt}
			\setcounter{subsection}{1}
			\setcounter{subcase}{0}
			\renewcommand{\thesubcase}{\arabic{subsection}.\arabic{subcase}}
			\begin{subcase}
				\rm{
					$h\not=0$. Let $z_0$ be a zero of $f$, substituting  $z_0$, \eqref{4.12} into \eqref{4.10} and \eqref{4.11}, get
					\begin{align}
						\label{4.13}
						(-b^2a+4ag)(f'(z_0))^2&=-A_4^2a(f''(z_0))^2,\\\nonumber
						[b^2-10g](f'(z_0))^2&=6(f''(z_0))^2.
					\end{align}
					\eqref{4.13} implies $g=\frac{b^2}{16}$. Therefore rewriting \eqref{4.121} into
					\begin{align}
						\label{4.14}
						f''-\frac{b^2}{16}f-h=0.
					\end{align}
					Solving the \eqref{4.14}, we obtain  $f(z)=t_1e^{\frac{b}{4}z}+t_2e^{\frac{-b}{4}z}+t_3$, where $t_i$ are constants, $i=1,2,3$, and $t_3=-\frac{16}{b^2}h\not=0$. Substituting  $f$ into \eqref{1.6} and using Lemma \ref{lm3.1}, then we get $t_3=0$, which is a contradiction.
				}
			\end{subcase}

			\vspace{6pt}
			\begin{subcase}
				\rm{
					$h=0$. Then from \eqref{4.121}, $f'''=gf'$. Let $z_0$ is a zero of $f$, substituting $z_0$ and $f'''$ into \eqref{4.10}, then from the right of \eqref{4.10}, we get $g=\frac{b^2}{4}$. Then \eqref{4.121} means
					\begin{align}
						\label{4.16}
						f''=\frac{b^2}{4}f.
					\end{align}
					Solving the \eqref{4.16}, we have $f(z)=k_1e^{\frac{b}{2}z}+k_2e^{\frac{-b}{2}z}$, where $k_1,k_2$ are  constants. Substituting  $f$ into \eqref{1.6} and using Lemma \ref{lm3.1},  we get $a(\frac{b}{2})^4=-1$,  $8(k_1)^3k_2=p_1$, $8(k_2)^3k_1=p_2$.  Which is  conclusion (i) of Theorem \ref{th1.2}.
				}
			\end{subcase}
		}
	\end{case}

	\vspace{6pt}
	\begin{case}
		\rm{
			$n=3$. From the \eqref{4.10},
			\begin{align}
				\label{4.17}
				f^2(b^2f-3f'')=f'(6ff'-b^2a(f')^2+6a(f'')^2+3af'f''').
			\end{align}
			We can see from \eqref{4.17} the zero of  $f'$ must be the zero of $b^2f-3f''$. Therefore
			\begin{align}
				\label{4.18}
				\frac{b^2f-3f''}{f'}=\frac{6ff'-b^2a(f')^2-6a(f'')^2+3af'f'''}{f^2}=h_1,
			\end{align}
			where $h_1$ is an entire function. By  $T(r,f)=O(r)$  and  Lemma \ref{lm3.4}, implies $m(r,h_1)=o(\log r)$, which means $h_1$ is a constant.
			
			Next we discuss  whether $h_1=0$. Aim for a contradiction.
			
			\vspace{6pt}
			\setcounter{subsection}{2}
			\setcounter{subcase}{0}
			\renewcommand{\thesubcase}{\arabic{subsection}.\arabic{subcase}}
			
			\begin{subcase}
				\rm{
					If $h_1\not=0$. From the \eqref{4.18}, get $f''=\frac{b^2}{3}f-\frac{h_1}{3}f'$, $3f'''=(\frac{h_1^2}{3}+b^2)f'-\frac{b^2h_1}{3}f$. Substituting $f'''$ into \eqref{4.18}, get
					\begin{align}
						\label{4.19}
						f^2(h_1+\frac{2ab^4}{3})=(6+ah_1b^2)ff'-\frac{1}{3}ah_1^2(f'')^2.
					\end{align}
					Let $z_0$ be a zero of $f$, because $f'$ and $f$  don't have common zero,
					then from $f''=\frac{b^2}{3}f-\frac{h_1}{3}f'$, we see $f''(z_0)\not=0$.  Substituting $z_0$ into \eqref{4.19}, then $\frac{1}{3}ah_1^2(f''(z_0))^2=0$.
					Since $a\not=0$ and $h_1\not=0$, which is impossible.
				}
			\end{subcase}

			\vspace{6pt}
			\begin{subcase}
				\rm{
					If $h_1=0$, then from \eqref{4.18}, $3f''=b^2f$. Solving this equation, $f(z)=l_1e^{\frac{b}{\sqrt{3}}z}+l_2e^{\frac{-b}{\sqrt{3}}z}$, where $l_1,l_2$ are nonzero constants.  Substituting  $f$ into \eqref{1.6} and using Lemma \ref{lm3.1}, get $p_1=0$, which is impossible.
				}
			\end{subcase}
			
		}
	\end{case}
	
	\vspace{6pt}
	\setcounter{subsection}{3}
	\setcounter{subcase}{0}
	\renewcommand{\thesubcase}{\arabic{subsection}.\arabic{subcase}}
	\begin{case}
		\rm{
			$n=2$.  If $9ab^2+4\not=0$, then we get Theorem \ref{thE}, which is  the  conclusion $\rm{(ii)-(1)}$ of Theorem \ref{th1.2}. Thus, we only need to consider $9ab^2+4=0$.
			Using \eqref{1.6} and \eqref{4.7} to  eliminate $e^{-bz}$, then
			
			\begin{align}
				\label{4.20}
				bf^2+ab(f')^2+2ff'+2af'f''= 2p_1be^{bz}.
			\end{align}
			Using  \eqref{4.20} and the differential of  \eqref{4.20} to eliminate $e^{bz}$, then
			\begin{align}
				\label{4.21}
				b^2f^2-2ff''+(ab^2-2)(f')^2-2af' f'''-2a(f'')^2=0.
			\end{align}
			From \eqref{4.21}, we have $b^2\frac{f}{f'}-2\frac{f''}{f'}+(ab^2-2)\frac{f'}{f}-2a\frac{f'''}{f}-2a\frac{(f'')^2}{ff'}=0$.
			Then by $T(r,f)=O(r)$  and  Lemma \ref{lm3.4}, we get $m(r,\frac{f}{f'})=o(\log r)$.
			
			Differentiating \eqref{4.21}, then
			\begin{align}
				\label{4.22}
				f'(2b^2f+2(ab^2-3)f''-2af^{(4)})=2f'''(f+3af'').
			\end{align}
			Rewriting \eqref{4.22} as
			\begin{align}
				\label{4.23}
				\frac{2b^2f+2(ab^2-3)f''-2af^{(4)}}{2f'''}=\frac{f+3af''}{f'}=h_2,
			\end{align}
			where $h_2$ is meromorphic  function.   By  $T(r,f)=O(r)$, $m(r,\frac{f}{f'})=o(\log r)$  and  Lemma \ref{lm3.4} implies $m(r,h_2)=o(\log r)$.
			
			We claim $h_2\not\equiv0$. Otherwise, if $h_2\equiv0$, then $f+3af''=0$. Solving this, we get $f(z)=c_1e^{\frac{\sqrt{3}}{2}bz}+c_2e^{-\frac{\sqrt{3}}{2}bz}$, where $c_1,c_2$ are nonzero constants. Substituting  $f$ into \eqref{1.6} and using Lemma \ref{lm3.1}, then $p_1=0$, we get a contradiction.
			
			Differentiating $f'h_2=f+3af''$ twice, get
			\begin{align}
				\label{4.24}
				f'-f''h_2-f'h_2'+3af'''=0,
			\end{align}
			\begin{align}
				\label{4.25}
				f''(1-2h_2')-f'h_2''-f'''h_2+3af^{(4)}=0.
			\end{align}
			
			Combining \eqref{4.23}, \eqref{4.24} and  \eqref{4.25}, we have
			\begin{align}
				\label{4.26}
				\frac{1}{-2}f'H'=f''H,
			\end{align}
			where $\frac{16}{3}+2h_2'+\frac{4h_2^2}{3a}=H$, $H'=2h_2''+\frac{8h_2h_2'}{3a}$.
			
			We claim $H\equiv0$. Otherwise, by the integration of \eqref{4.26}, we have $c(f')^2=\frac{1}{H}$, where $c$ is a nonzero constant.
			By \eqref{4.22}, we see the zero of $f'$ must be the zero of $f'''$ or $f+3af''$. If all the zero of $f'$  are the zero of $f'''$, then from the \cite[proof of Theorem 8 ]{Gao}, we get $ab^2+1=0$ or $ab^2-4=0$, which contradicts $9ab^2=-4$. Therefore, here existence
			a zero $z_0$ of $f'$ such that $f(z_0)+3af''(z_0)=0$, from \eqref{4.23}, we see $z_0$ is not a pole of $h_2$. Since the zero of  $f'$ is simple, then $f''(z_0)\not=0$, substituting $z_0$ into \eqref{4.26}, we get $H(z_0)=0$. Since $c(f')^2=\frac{1}{H}$ and  $f'$ is entire, then $H$ has no zeros, which contradicts $H(z_0)=0$.
			From $H\equiv0$, we get
			\begin{align}
				\label{5.1}
				\frac{16}{3}+2h_2'+\frac{4h_2^2}{3a}=0.
			\end{align}

			If $h_2$ is a non-constant meromorphic function, by sloving  the Riccati equation \eqref{5.1}, we get $h_2(z)=\frac{c_2e^{\frac{8i\sqrt{a}}{3a}z}+2\sqrt{a}}{c_1e^{\frac{8i\sqrt{a}}{3a}z}+i}$ or $h_2(z)=\frac{c_4e^{\frac{-8i\sqrt{a}}{3a}z}+2i\sqrt{a}}{c_2e^{\frac{-8i\sqrt{a}}{3a}z}+1}$.  Where $c_i~(i=1,2,3,4)$ are nonzero constants. By \cite[Theorem 3.12]{HJ}, then $m(r,h_2)=O(\log^2 r)$. which is contradictory to   $m(r,h_2)=o(\log r)$.
			
			If  $h_2$ is a constant. From $9ab^2=-4$ and \eqref{5.1}, we get $h_2=\pm \frac{4}{3b}$. By \eqref{4.23} and $9ab^2+4=0$, we have
			\begin{align}
				\label{4.27}
				f+\frac{-4}{3b^2}f''=f'h_2
			\end{align}
			Solving \eqref{4.27}, then $f(z)=l_1e^{\frac{-3}{2}bz}+l_2e^{\frac{1}{2}bz}$ or $f(z)=l_3e^{\frac{3}{2}bz}+l_4e^{\frac{-1}{2}bz}$, where $l_i(~i=1,2,3,4)$ are nonzero constants. Substituting  $f(z)=l_1e^{\frac{-3}{2}bz}+l_2e^{\frac{1}{2}bz}$ into \eqref{1.6} and using Lemma \ref{lm3.1}, get $l_1^2=\frac{9}{8}p_1$, $l_2^2=\frac{p_2^2}{8p_1}$.
			Substituting  $f(z)=l_3e^{\frac{3}{2}bz}+l_4e^{\frac{-1}{2}bz}$ into \eqref{1.6} and using Lemma \ref{lm3.1}, get $l_3^2=\frac{9}{8}p_2$, $l_4^2=\frac{p_1^2}{8p_2}$. Now we have  the  conclusion $\rm{(ii)-(2)}$ of Theorem \ref{th1.2}.

		}
	\end{case}

	\section*{Declarations}
	\begin{itemize}
		\item \noindent{\bf Funding}
		This research work is supported by the National Natural Science Foundation of China (Grant No. 12261023, 11861023) and Graduate Research Fund Project of Guizhou Province (2024YJSKYJJ186).
		
		\item \noindent{\bf Conflicts of Interest}
		The authors declare that there are no conflicts of interest regarding the publication of this paper.
		
	\end{itemize}

\end{document}